\newtheorem{theo}{Theorem}[section]
\newtheorem{lemma}[theo]{Lemma}
\newtheorem{prop}[theo]{Proposition}
\newtheorem{coro}[theo]{Corollary}
\theoremstyle{remark}
\newtheorem{rema}[theo]{Remark}
\newtheorem{defi}[theo]{Definition}
\newtheorem{nota}[theo]{Notation}
\newtheorem{assumption}[theo]{Assumption}
\numberwithin{equation}{subsection}
\def\imod#1{\allowbreak\mkern10mu({\operator@font mod}\,\,#1)}
\newcommand{\cc}[1]{\mathcal{#1}}  
\newcommand{\CC}{\mathbb{C}}
\newcommand{\ZZ}{\mathbb{Z}}
\newcommand{\PP}{\mathbb{P}}
\newcommand{\QQ}{\mathbb{Q}}
\newcommand{\RR}{\mathbb{R}}
\newcommand{\sMbar}{\overline{\mathscr{M}}}
\newcommand{\bq}{\bm{q}}
 \newcommand{\bp}{\mathbf{p}}
\newcommand{\bt}{{\bm{t}}}
\newcommand{\bs}{\bm{s}}
\newcommand{\cX}{\cc X}
\newcommand{\cY}{\cc Y}
\newcommand{\sV}{{\mathscr V}}
\newcommand{\sL}{{\mathscr L}}
\newcommand{\cC}{\cc C}
\newcommand{\cE}{\cc E}
\newcommand{\cZ}{\cc Z}
\DeclareMathOperator{\ch}{\widetilde{ch}}
\DeclareMathOperator{\chr}{ch}
\DeclareMathOperator{\tot}{tot}
\newcommand{\br}[1]{\left\langle#1\right\rangle}  
\newcommand{\set}[1]{\left\{#1\right\}}  
\newcommand{\op}[1]{\operatorname{#1}}
\title{Narrow quantum $D$-modules and quantum Serre duality}
\author{{Mark} {Shoemaker}}
\address{
   Colorado State University \\
   Department of Mathematics \\
   1874 Campus Delivery \\
    Fort Collins, CO, USA, 80523-1874
}
\email{mark.shoemaker@colostate.edu}
\begin{document}

\begin{abstract}
Given $\cY$ a non-compact manifold or orbifold, we define a natural subspace of the cohomology of $\cY$ called the narrow cohomology.  We show that despite $\cY$ being non-compact, there is a well-defined and non-degenerate pairing on this subspace.  The narrow cohomology proves useful for the study of genus zero Gromov--Witten theory.  When $\cY$ is a smooth complex variety or Deligne--Mumford stack, one can define a quantum $D$-module on the narrow cohomology of $\cY$.  This yields a new formulation of quantum Serre duality.
\end{abstract}

\maketitle
\tableofcontents

\section{Introduction}
Let $\cX$ be a smooth complex variety (or orbifold) and let $\cE \to \cX$ be a vector bundle over $\cX$, with a regular section $s \in \Gamma( \cX, \cE)$.  We can define the subvarieties
\[ \cZ := \{s = 0\} \subset \cX\hspace{.3 cm} \text{  and  } \hspace{.3 cm}\cY := \tot(\cE^\vee),\] to obtain
 the following diagram
\begin{equation}\label{e1}\begin{tikzcd}
& \ar[d, "\pi"] \cY  \\
\cZ \ar[r, hookrightarrow, "j"] &\cX.
\end{tikzcd}\end{equation}
As was originally observed in \cite{G1} in the case where $\cX$ is projective space $\PP^N$, there is a nontrivial relationship between the genus zero Gromov--Witten invariants of $\cZ$ and those of $\cY$.  This correspondence was given the name \emph{quantum Serre duality}.  It was originally proven by showing that (equivariant lifts of) generating functions of Gromov--Witten invariants of $\cZ$ and $\cY$ obey very similar recursions.  
\subsection{Other incarnations}
The correspondence has since been generalized and rephrased many times.  In \cite{CG}, it was generalized to the case of an arbitrary base $\cX$ using 
Givental's Lagrangian cones.  In this formulation, one endows $\cY$ with a $\CC^*$-action by scaling fibers.  Then
 the $\CC^*$-equivariant genus zero Gromov--Witten theory of $\cY$ can be used to recover the genus zero Gromov--Witten theory of $\cZ$.  This result forms the basis (and the proof) of later formulations.

%
%
%

Recently in \cite{IMM}, quantum Serre duality was re-expressed as a correspondence between \emph{quantum $D$-modules}.   The quantum $D$-module $QDM(\cX)$ for a (compact) space $\cX$ consists of 
\begin{itemize}
\item the quantum connection:\footnote{In the case where $\cZ$ and $\cY$ are orbifolds we should replace cohomology with \emph{Chen--Ruan} cohomology.  See \S~\ref{ss:orb} for details.}
 \[\nabla^\cX: H^*(\cX) \to H^*(\cX) [\bt, z, z^{-1}],\] a flat connection defined in terms of the quantum product $\bullet_\bt$;
\item a pairing: \[S^\cX(-,-): H^*(\cX)[\bt, z, z^{-1}] \otimes H^*(\cX)[\bt, z, z^{-1}] \to \CC[z, z^{-1}],\]
which is flat with respect to the connection.
\end{itemize}  See Definition~\ref{d:qdm} for details.  
As with Givental's Lagrangian cone, the quantum $D$-module of $\cX$ fully determines the genus zero Gromov--Witten theory of $\cX$.
In \cite{Iri}, Iritani defined a corresponding \emph{integral structure}, a lattice lying inside the kernel of $\nabla^\cX$ defined as the image of the bounded derived category $D(\cX)$ under a map 
\[\bs^\cX: D(\cX) \to H^*(\cX)[\bt, z, z^{-1}].
\] See \S~\ref{ss:fs} for details.

One benefit to the formulation in terms of quantum $D$-modules \cite{IMM}, is that quantum Serre duality can be phrased non-equivariantly, as the quantum connection $\nabla^\cY$ is well-defined without the need for  $\CC^*$-localization.
It is shown (Theorem 3.14, Corollary 3.17) that the map
\begin{equation}\label{e2} \pi^* \circ j_*: H^*_{\op{amb}}(\cZ)[\bt, z, z^{-1}] \to H^*(\cY)[\bt, z, z^{-1}]\end{equation}
sends
$\ker\left(\nabla^\cZ \right)$ to $\ker\left(\nabla^\cY \right)$ after a change of variables.  Here $H^*_{\op{amb}}(\cZ)$ denotes the image $j^*(H^*(\cX)) \subset H^*(\cZ)$.  Furthermore, they prove that this map is compatible with the integral structures and with the functor \[(-1)^{\op{rk}}(\cE)\op{det}(\cE)\otimes (\pi^* \circ j_*)(-): D(\cZ) \to D(\cY).\]

It is important to note, however, that \eqref{e2} does not give a map of quantum $D$-modules, because the pairing $S^\cY(-,-)$ is not well-defined when $\cY$ is non-compact.  
Nevertheless, the formulation of quantum Serre duality in terms of the quantum connection seems to be a natural and useful way of viewing the correspondence.

\subsection{Results}
The work described above raises the following interrelated questions:
\begin{enumerate}
\item Is there a well-defined quantum $D$-module associated to $\cY$ when $\cY$ is non-compact?  More precisely,  can one define a pairing which is flat with respect to the quantum connection?
\item If (1) holds, can quantum Serre duality be rephrased as a map between quantum $D$-modules, identifying not just the quantum connection but also the pairings?
\item The map in \eqref{e2} is not an isomorphism; under mild assumptions it is an inclusion.  Is there a natural way of describing the image $\pi^* \circ j_*\left(\ker\left(\nabla^\cZ \right)\right)$ inside $\ker\left(\nabla^\cY \right)$?
\end{enumerate}

This paper answers each of these questions in the affirmative.  Given $\cY$ a non-compact smooth variety, we define a subspace $H^*_{\op{nar}}(\cY) \subset H^*(\cY)$ which we call the \emph{narrow cohomology} of $\cY$.  There is a natural forgetful morphism 
\[\phi: H^*_{\op{c}}(\cY) \to H^*(\cY)\]
from compactly supported cohomology to cohomology.  The narrow cohomology $H^*_{\op{nar}}(\cY)$ is defined to be the image of $\phi$.  This turns out to be the right framework in which to formulate quantum Serre duality as an \emph{isomorphism} of quantum $D$-modules.

One observes that the Poincar\'e pairing between $H^*_{\op{c}}(\cY)$ and $H^*(\cY)$ induces a non-degenerate pairing on $H^*_{\op{nar}}(\cY)$.  We use this to define a quantum $D$-module on the narrow subspace.
\begin{theo}[Corollary~\ref{c:qsdnar}]
The quantum connection \[\nabla^\cY: H^*(\cY) \to H^*(\cY) [[\bt]][ z, z^{-1}]\] preserves $H^*_{\op{nar}}(\cY)$.  Furthermore, there is a well-defined and nondegenerate pairing 
\[S^{\cY, \op{nar}}(-,-): H^*_{\op{nar}}(\cY) [[\bt]][ z, z^{-1}] \otimes H^*_{\op{nar}}(\cY) [[\bt]][ z, z^{-1}] \to \CC[z, z^{-1}],\]
which is flat with respect to $\nabla^\cY$.  We obtain a quantum $D$-module $QDM_{\op{nar}}(\cY)$ on $H^*_{\op{nar}}(\cY)$.
\end{theo}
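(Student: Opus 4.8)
The plan is to build $S^{\cY,\op{nar}}$ directly out of Poincar\'e duality and then to reduce the two remaining assertions --- invariance of $H^*_{\op{nar}}(\cY)$ under $\nabla^\cY$, and flatness of the pairing --- to the single fact that the genus-zero quantum structures on $\cY$ preserve $H^*_{\op{nar}}(\cY)$. For the pairing: Poincar\'e duality on the oriented manifold $\cY$ gives a perfect pairing $H^*_{\op{c}}(\cY)\otimes H^*(\cY)\to\CC$, $(\tilde\alpha,\beta)\mapsto\int_\cY\tilde\alpha\cup\beta$, which satisfies the self-adjointness $\int_\cY\gamma\cup\phi(\tilde\beta)=\int_\cY\phi(\gamma)\cup\tilde\beta$ for $\gamma,\tilde\beta\in H^*_{\op{c}}(\cY)$ (the standard compatibility between the cup products and $\phi$). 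For $\alpha=\phi(\tilde\alpha)$ and $\beta=\phi(\tilde\beta)$ in $H^*_{\op{nar}}(\cY)$ I would set $(\alpha,\beta):=\int_\cY\tilde\alpha\cup\beta$: this is symmetric and, by self-adjointness, independent of the chosen lifts (replacing $\tilde\alpha$ by $\tilde\alpha+\delta$ with $\delta\in\ker\phi$ changes the value by $\int_\cY\delta\cup\phi(\tilde\beta)=\int_\cY\phi(\delta)\cup\tilde\beta=0$), and it is non-degenerate because the annihilator of $\op{im}\phi$ under the perfect Poincar\'e pairing is exactly $\ker\phi$, so the radical of $(-,-)$ is $\phi(\ker\phi)=0$ --- this is the non-degeneracy of the narrow pairing recorded above. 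One then extends $(-,-)$ to $S^{\cY,\op{nar}}$ by the same recipe as for a compact target, namely the $z\mapsto -z$ twist in the first slot together with conjugation by the fundamental solution $L=L^\cY(\bt,z)$ of $\nabla^\cY$, so that flatness of $S^{\cY,\op{nar}}$ becomes the usual formal identity once $L$ is known to preserve $H^*_{\op{nar}}(\cY)$.

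The substantive point is therefore that $\nabla^\cY$ and $L$ preserve $H^*_{\op{nar}}(\cY)$. Since $\nabla^\cY$ has the shape $d+z^{-1}\cdot(\text{quantum multiplication})+(\text{grading operator and multiplication by }c_1(T\cY))$, and since the grading operator preserves the graded subspace $H^*_{\op{nar}}(\cY)$ while cup product with any class preserves the ideal $H^*_{\op{nar}}(\cY)=\op{im}\phi$, everything reduces to showing that quantum multiplication $\gamma\bullet_\bt(-)$ and the fundamental solution $L$ send $H^*_{\op{nar}}(\cY)$ into $H^*_{\op{nar}}(\cY)[[\bt]][z,z^{-1}]$. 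I would separate off the degree-$d$ contributions. The $d=0$ term is the classical cup product, which preserves the ideal $H^*_{\op{nar}}(\cY)$. For $d\neq0$ I would use the geometry of $\cY=\tot(\cE^\vee)$: under the running hypotheses the positive-degree genus-zero invariants of $\cY$ are controlled by a proper locus of stable maps factoring through the zero section $\iota\colon\cX\hookrightarrow\cY$ (a section of the concave bundle $\cE^\vee$ pulled back to a connected genus-zero curve of positive degree vanishes), so the relevant moduli spaces are isomorphic to $\ol{M}_{0,k}(\cX,d)$ and all evaluation maps factor through $\iota$; hence every positive-degree contribution lies in the image of the Gysin map $\iota_*$, and, when $\cX$ is proper, the projection formula identifies $\op{im}\iota_*=e(\cE^\vee)\cup H^*(\cY)=H^*_{\op{nar}}(\cY)$ (the normal bundle of the zero section being $\cE^\vee$). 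This simultaneously shows $H^*_{\op{nar}}(\cY)$ is an ideal for $\bullet_\bt$, so $\nabla^\cY$ preserves it. Alternatively this can be read off from the quantum Serre duality isomorphism $\pi^*\circ j_*\colon H^*_{\op{amb}}(\cZ)\xrightarrow{\sim}H^*_{\op{nar}}(\cY)$ together with the closure of the ambient quantum cohomology of $\cZ$ under the ambient quantum product.

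With these facts in hand the theorem assembles: $\nabla^\cY$ restricts to a connection on $H^*_{\op{nar}}(\cY)[[\bt]][z,z^{-1}]$, and $S^{\cY,\op{nar}}$ is flat for it by the same computation as in the compact case --- flatness is equivalent to $L$ being an isometry for the Poincar\'e pairing together with the WDVV and topological-recursion relations among the genus-zero invariants that occur, all of which make sense here because each such invariant is either integrated against a class from $H^*_{\op{c}}(\cY)$ or supported on a moduli space mapping into the proper locus in $\cX$. Hence $\bigl(H^*_{\op{nar}}(\cY)[[\bt]][z,z^{-1}],\ \nabla^\cY,\ S^{\cY,\op{nar}}\bigr)$ satisfies the axioms of Definition~\ref{d:qdm}, which is the asserted $QDM_{\op{nar}}(\cY)$.

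The hard part will be the geometric step above: verifying that the non-equivariant genus-zero structures on the non-compact $\cY$ genuinely take values in $H^*_{\op{nar}}(\cY)$ rather than merely being formally defined. The concavity and properness hypotheses are exactly what force the positive-degree contributions to be supported over a proper locus in $\cX$, and without them one would be pushed back into $\CC^*$-localization; so a good deal of the work lies in fixing the correct standing hypotheses and establishing the moduli comparison $\ol{M}_{0,k}(\cY,d)\cong\ol{M}_{0,k}(\cX,d)$. If one instead prefers to deduce the corollary from quantum Serre duality, the additional point to check is that $\pi^*\circ j_*$ is an isomorphism onto $H^*_{\op{nar}}(\cY)$: surjectivity is the projection formula $j_*\circ j^*=(-)\cup e(\cE)$ combined with $\pi^*$ being an isomorphism, while injectivity on $H^*_{\op{amb}}(\cZ)$ requires the hypothesis that $j^*$ and $(-)\cup e(\cE)$ have the same kernel on $H^*(\cX)$.
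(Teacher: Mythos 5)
The linear-algebra part of your argument --- the construction of the narrow pairing via lifts, the well-definedness by Lemma~\ref{l:cupzero}, and the non-degeneracy --- is consistent with what the paper does in \S\ref{s:1}, and the outline of extending to the sesquilinear $z$-pairing and reducing flatness to $L$ being an isometry is sound \emph{once} one knows that $L^\cY$ and the quantum product preserve $H^*_{\op{nar}}(\cY)$.

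But the geometric argument you offer for that preservation is both unnecessarily restrictive and incorrect, and it departs fundamentally from the paper's route.

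First, Corollary~\ref{c:qsdnar} is stated for \emph{any} non-proper $\cY$ whose genus-zero evaluation maps are proper, not just for $\cY = \tot(\cE^\vee)$; a proof that invokes the zero section $\iota\colon \cX\hookrightarrow\cY$ cannot cover the stated generality. Second, even in the total-space case your key claim --- that positive-degree genus-zero stable maps to $\cY$ factor through the zero section, so $\sMbar_{0,k}(\cY,d)\cong\sMbar_{0,k}(\cX,d)$ --- is false under the standing convexity hypothesis. Convexity of $\cE$ gives $H^1(\cC,f^*\cE)=0$, and the paper deduces only $H^0(\cC,f^*\cE^\vee(-p_i))=0$ (Lemma~\ref{l:hnot}), i.e. that the restriction $H^0(\cC,f^*\cE^\vee)\to f^*\cE^\vee|_{p_i}$ is injective; it does \emph{not} give $H^0(\cC,f^*\cE^\vee)=0$. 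Already for $\cE=\cc O$ (which is convex) one has $\cY=\cX\times\CC$, every genus-zero map to $\cY$ is a map to $\cX$ paired with an arbitrary constant in $\CC$, and $\sMbar_{0,k}(\cY,d)=\sMbar_{0,k}(\cX,d)\times\CC$ --- the stable maps certainly do not all factor through the zero section. What Lemma~\ref{l:propmap} actually proves is only that the evaluation maps are \emph{proper}, exactly the input needed to make the double brackets well-defined. The vanishing of $H^0(\cC,f^*\cE^\vee)$ is a concavity hypothesis on $\cE^\vee$, which is strictly stronger than convexity of $\cE$ and is not assumed here. (And even where concavity holds, the comparison of moduli spaces would not yet compare virtual classes; the obstruction bundle $R^1\pi_*f^*\cE^\vee$ intervenes.) Your alternative route via quantum Serre duality is also unavailable at this point: that isomorphism is proved in \S\ref{s:qsd}, after and partly relying on this corollary, so the argument would be circular.

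The paper's actual mechanism is far more economical and is purely formal: Proposition~\ref{p:phi} shows that the forgetful map $\phi\colon H^*_{\op{CR},\op{c}}(\cY)\to H^*_{\op{CR}}(\cY)$ intertwines the two quantum products, $T_i\bullet_\bt^\cY\phi(u)=\phi(T_i\bullet_\bt^{\cY}u)$, and likewise intertwines $\nabla$ and $L$. The only ingredients are that $\phi$ commutes with pullback and the elementary identity $\alpha\cup\beta=\alpha\cup\phi(\beta)$ on compactly supported classes, applied to the evaluation pullbacks in the double bracket. Since $H^*_{\op{CR},\op{nar}}(\cY)=\op{im}(\phi)$ by definition, preservation of the narrow subspace is immediate, and flatness of $S^{\cY,\op{nar}}$ follows by pushing the duality statement in Proposition~\ref{p:cvsreg} through $\phi$. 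No geometry of the moduli space beyond properness of the evaluation maps is needed. You should replace your moduli-space analysis with this intertwining argument; the rest of your plan then goes through.
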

There is also a well-defined integral structure for the narrow quantum $D$-module, coming from the derived category $D(\cY)_{\op{c}}$ of complexes of coherent sheaves on $\cY$, exact outside a proper subvariety.
The narrow quantum $D$-module defined above turns out to be particularly relevant to quantum Serre duality.  In the particular case of $\cY$ and $\cZ$ from \eqref{e1}, we show there is an isomorphism of quantum $D$-modules from $QDM_{\op{nar}}(\cY)$ to $QDM_{\op{amb}}(\cZ)$.  This is the main result of the paper.
\begin{theo}[Theorem~\ref{t:QSD}] \label{t01} There is an isomorphism \[\bar \Delta_+: H^*_{\op{nar}}(\cY)[z, z^{-1}] \to H^*_{\op{amb}}(\cZ)[z, z^{-1}]\]
which identifies the quantum $D$-module $QDM_{\op{nar}}(\cY)$ with $\bar f^* \left( QDM_{\op{amb}}(\cZ) \right)$ (where $\bar f$ is an explicit change of variables).  Furthermore it is compatible with the integral structure and the functor $j^* \circ \pi_*$, i.e., the following diagram commutes;
\begin{equation}
\begin{tikzcd}
D(\cY)_{\cX} \ar[r, " j^* \circ \pi_*"] \ar[d, "s^{\cY, \op{nar}}"] &  j^*(D(\cX))  \ar[d, "s^{\cZ, \op{amb}}"]\\
QDM_{\op{nar}}(\cY) \ar[r, "\bar \Delta_+"] & \bar f^* \left( QDM_{\op{amb}}(\cZ) \right).
\end{tikzcd}
\end{equation}
%
\end{theo}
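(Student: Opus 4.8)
The plan is to build the map $\bar\Delta_+$ as a $z$-linear lift of the classical map $\pi^*\circ j_*$ (suitably normalized), and then to verify, in order: (i) that it is well-defined and an isomorphism at the level of narrow/ambient cohomology; (ii) that it intertwines the two quantum connections after the change of variables $\bar f$; (iii) that it identifies the pairings $S^{\cY,\op{nar}}$ and $S^{\cZ,\op{amb}}$; and (iv) that the displayed square with the integral structures commutes. For step (i), the key observation is that $j_*\colon H^*_{\op{amb}}(\cZ)\to H^*(\cX)$ factors through the Euler class of $\cE$, and $\pi^*$ is an isomorphism onto $H^*(\cY)$ since $\cY=\tot(\cE^\vee)$ is an affine bundle; the content is that the composite lands in (and surjects onto, under mild assumptions) the narrow subspace $H^*_{\op{nar}}(\cY)=\op{im}(\phi)$. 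I would prove this by identifying $H^*_{\op{c}}(\cY)$ via the Thom isomorphism with $H^{*-2\op{rk}\cE}(\cX)$ and chasing $\phi$ through to see its image is exactly $\pi^*(e(\cE)\cup H^*(\cX))=\pi^*j_*j^*H^*(\cX)$, i.e.\ the image of $\pi^*\circ j_*$ restricted to ambient classes.

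For step (ii), I would invoke the quantum Serre duality already established in the quantum-connection formulation — Theorem~3.14 and Corollary~3.17 of \cite{IMM}, recalled around \eqref{e2} — which says precisely that $\pi^*\circ j_*$ sends $\ker(\nabla^\cZ)$ to $\ker(\nabla^\cY)$ after the change of variables $\bar f$. Since by step (i) the narrow connection $\nabla^\cY$ is just the restriction of $\nabla^\cY$ to the (preserved) subspace $H^*_{\op{nar}}(\cY)$, this immediately gives the intertwining of $QDM_{\op{nar}}(\cY)$ with $\bar f^*QDM_{\op{amb}}(\cZ)$ at the level of connections; the only thing to check is that the flat sections produced this way span, which follows from the isomorphism in step (i).

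The main obstacle — and the genuinely new point of the theorem — is step (iii), matching the pairings. The pairing $S^{\cZ,\op{amb}}$ comes from the ordinary Poincar\'e pairing on $\cZ$ (restricted to ambient classes), while $S^{\cY,\op{nar}}$ comes from the $H^*_{\op{c}}(\cY)\otimes H^*(\cY)\to\CC$ pairing descended to $H^*_{\op{nar}}(\cY)$. To compare them I would write, for narrow classes $\alpha=\pi^*j_*a$, $\beta=\pi^*j_*b$, the narrow pairing as $\int_\cX \tilde\alpha\cup j_*b$ where $\tilde\alpha\in H^*_{\op{c}}(\cY)$ is the canonical lift, then use the projection formula and the self-intersection formula $j^*j_* = e(\cN_{\cZ/\cX})\cup(-) = e(\cE|_\cZ)\cup(-)$ to reduce $\int_\cX j_*a\cup j_*b$ to $\int_\cZ e(\cE|_\cZ)\cup j^*(\cdot)$; the discrepancy between this and the naive Poincar\'e pairing $\int_\cZ a'\cup b'$ on $\cZ$ is exactly the twist by $(-1)^{\op{rk}\cE}\op{det}(\cE)$ that already appears in \cite{IMM}, so after absorbing that normalization into the definition of $\bar\Delta_+$ the two pairings agree. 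Flatness of $S^{\cY,\op{nar}}$ with respect to $\nabla^\cY$ (needed for the statement to make sense) was established in Corollary~\ref{c:qsdnar}, so no new flatness computation is required here.

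For step (iv), the commutativity of the square is a formal consequence of Iritani's construction of $\bs^{\cX}$ together with the Hirzebruch--Riemann--Roch/Gamma-class compatibility: $\bs^{\cY,\op{nar}}$ and $\bs^{\cZ,\op{amb}}$ are both built from $\widehat\Gamma$-classes and (twisted) Chern characters, and the functor $j^*\circ\pi_*\colon D(\cY)_\cX\to j^*D(\cX)$ is inverse (up to the same $(-1)^{\op{rk}\cE}\op{det}(\cE)$ twist) to the functor $(-1)^{\op{rk}\cE}\op{det}(\cE)\otimes(\pi^*\circ j_*)(-)$ appearing in \cite{IMM}. So I would check the square commutes on the level of topological $K$-theory — where it reduces to the projection formula for $\pi$ and base change for $j$ — and then note that the cohomological realizations match because the characteristic-class recipes on both sides differ precisely by the Euler class of $\cE$, which is the same discrepancy already accounted for in steps (i) and (iii). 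The upshot is that essentially all the analytic content is imported from \cite{IMM}; the new work is organizing it around the narrow subspace and tracking the single normalization twist through the pairing and the integral structure.
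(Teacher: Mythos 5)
Your four-step skeleton (isomorphism, connection intertwining, pairing, integral structure) matches the logical organization of the paper's argument, and your step (i) and the rough idea of step (iii) (self-intersection formula $j^*j_* = e(\cE)\cup -$ producing a rank-of-$\cE$ discrepancy) are on the right track. However, there is a genuine gap in step (ii), and the normalization bookkeeping in (iii) is not quite correct.

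The gap in step (ii): you propose to ``invoke'' Theorem~3.14 / Corollary~3.17 of \cite{IMM} directly. But that result concerns the map $\pi^*\circ j_*\colon H^*_{\op{amb}}(\cZ)\to H^*(\cY)$, going \emph{from} $\cZ$ \emph{to} $\cY$, whereas $\bar\Delta_+$ in the theorem goes the other way, $H^*_{\op{nar}}(\cY)\to H^*_{\op{amb}}(\cZ)$, and is built from $j^*\circ\pi_*^{\op{c}}$ composed with a choice of compactly-supported lift. These are adjoint to one another with respect to the Poincar\'e pairing, not the same map, and critically the change of variables appearing here is the \emph{inverse} of the one in \cite{IMM}, not the same $\bar f$ (see Remark~\ref{r:IMMr}). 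Citing \cite{IMM} verbatim therefore does not produce the claimed statement; one has to either (a) carefully dualize/invert \cite{IMM}'s Theorem~3.14, tracking what happens to the change of variables, or (b) give a direct proof. The paper chooses (b): it introduces the compactly-supported fundamental solution $L^{\cY,\op{c}}$, proves Proposition~\ref{p:Lcomc} relating $L^{\cY,\op{c}}$ to the Euler-twisted $L^{e(\cE)}$ via $i_*^{\op{c}}$, establishes Theorem~\ref{t:dcom} relating $\nabla^{\cY,\op{c}}$ to $\nabla^{\cZ}$, and then descends along $\phi\colon H^*_{\op{CR},\op{c}}(\cY)\to H^*_{\op{CR},\op{nar}}(\cY)$ using Proposition~\ref{p:phi} (which shows $\phi$ commutes with $\bullet_{\bt}$, $\nabla$, $L$). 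That chain is the real content of the proof and cannot be bypassed by a reference to \cite{IMM}.

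On step (iii): you suggest ``absorbing'' the $(-1)^{\op{rk}\cE}\det(\cE)$ twist into $\bar\Delta_+$. In the paper $\bar\Delta_+$ carries no $\det(\cE)$ twist; it is $\Delta_+$ multiplied only by $(2\pi\sqrt{-1}\,z)^{\op{rk}\cE}$. The sign $(-1)^{\op{rk}\cE}$ produced by comparing $e(\cE)$ to $e(\cE^\vee)$ is cancelled by the $z$-sesquilinearity of $S$: one of the $z^{\op{rk}\cE}$ factors becomes $(-z)^{\op{rk}\cE}$ inside $S^\cZ(u(-z),v(z))$, giving $(-1)^{\op{rk}\cE}(2\pi\sqrt{-1}\,z)^{\dim\cZ+2\op{rk}\cE}=(2\pi\sqrt{-1}\,z)^{\dim\cY}$ up to the sign, which matches $S^\cY$. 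Meanwhile the $\det(\cE)$ twist appears only in \cite{IMM}'s derived-category functor $(-1)^{\op{rk}\cE}\det(\cE)\otimes(\pi^*\circ j_*)$, whose adjoint is precisely the untwisted $j^*\circ\pi_*$ used in the present theorem's square. Step (iv) is in the right spirit but again must go through the $\hat\Gamma$-class computation (the identity $\hat\Gamma_\cY=\pi^*(e^{\pi\sqrt{-1}c_1(\cE)}\hat\Gamma_\cX\hat\Gamma(\cE)^{-1}(2\pi\sqrt{-1})^{\deg_0/2}\op{Td}(\cE^\vee))$ in the proof of Theorem~\ref{t:dcom}), not just a $K$-theory projection-formula argument; this is where the factor $(2\pi\sqrt{-1}\,z)^{\op{rk}\cE}$ in the definition of $\bar\Delta_+$ originates, and it also has to be reconciled with $\ch=\phi\circ\ch^{\op{c}}$.
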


In the course of proving the above theorem we make several interesting connections between the (non-equivariant) quantum connection on $\cY$ and different modifications of the Gromov--Witten theory over $\cX$.  

First, in \S~\ref{ss:ts} we give an explicit description of a modified quantum product, denoted $\bullet^{\cY \to \cX}_{\bt}$, on $\cX$ which pulls back to the usual (non-equivariant) quantum product $\bullet^\cY_{\bt}$ via $\pi^*$.  This result is dual in spirit to \cite[\S~2]{Pan} where the ``$\bullet_{\cZ}$ product induced by a hypersurface,'' a modified quantum product on $\cX$, is defined and related to the usual quantum product on $\cZ$.

Second, we consider the quantum connection with compact support on $\cY$:
\[ \nabla^{\cY, \op{c}}: H^*_{\op{c}}(\cY) \to H^*_{\op{c}}(\cY)[\bt, z, z^{-1}]\]
which is defined identically to $\nabla^\cY$ but acts on cohomology with compact support.  In a short remark \cite[Remark 3.17]{IMM}, it was reasoned by an analogy that (the non-equivariant limit of) the Euler-twisted quantum connection $\nabla^{e(\cE)}$ on  $\cX$ (see Definition~\ref{d:dub1}) could be thought of as the quantum connection with compact support.  In Proposition~\ref{p:Lcomc} we make this observation precise, showing that the pushforward isomorphism
\[ i^{\op{c}}_*: H^*_{\op{CR}}(\cX) \to H^*_{\op{CR}, \op{c}}(\cY)\]
identifies $\nabla^{e(\cE)}$ with $\nabla^{\cY, \op{c}}$ up to a change of variables.

The above results imply yet another variation of quantum Serre duality, which states, roughly, that 
\[j^* \circ \pi^{\op{c}}_*: H^*_{\op{CR}, \op{c}}(\cY) \to H^*_{\op{CR}, \op{amb}}(\cZ)\]
maps  $\ker\left(\nabla^{\cY, \op{c}}\right)$ to $\ker \left(\nabla^\cZ\right)$ and is compatible with integral structures and with the functor 
\[j^* \circ \pi_*:D(\cY)_{\cX} \to  D(\cZ).\]
See Theorem~\ref{t:dcom} for the precise statement.  This result is essentially the adjoint of the statement given in Theorem 3.13 of \cite{IMM}, once one observes $\nabla^{\cY, \op{c}}$ and $\nabla^\cY$ as dual with respect to the pairing between cohomology and compactly supported cohomology on $\cY$.
However there is a difference in the change of variables used in \cite{IMM} versus the theorem above.  
We give a direct proof of Theorem~\ref{t:dcom} in a slightly more general context than that appearing in \cite{IMM}, however the proof techniques are similar.  
See Remark~\ref{r:IMMr} for more details on the connection.  
\subsection{Applications}
We expect the constructions and results of this paper to be useful in formulating and proving new correspondences in genus zero Gromov--Witten theory.  For instance, with the narrow quantum $D$-module on hand, one can formulate a crepant transformation conjecture between non-compact $K$-equivalent spaces $\cY$ and $\cY '$.  Previously, most results along these lines have assumed that $\cY$ and $\cY '$ be toric varieties and have used equivariant Gromov--Witten theory for both the statement and proof of the correspondence.  In \cite{Sh} we show how, in a particular case, the equivariant correspondence implies the narrow correspondence.  This is generalized to toric wall crossings in \cite{RoSho}.

The formulation of quantum Serre duality as in Theorem~\ref{t01} will be useful as a tool for proving other correspondences.  In \cite{LPS}, we observed that the LG/CY correspondence is implied by a suitable version of the crepant transformation conjecture.  We use the shorthand ``CTC implies LG/CY.''  However the implication was somewhat messy to state in \cite{LPS}, as it required a careful analysis of the non-equivariant limits of certain maps and generating functions.  Furthermore it did not involve integral structures. 

In \cite{Sh}, we show that Theorem~\ref{t01} together with a sister result involving FJRW theory may be used in tandem to clarify the ``CTC implies LG/CY'' statement of \cite{LPS}.
We 
show that 
the narrow crepant transformation conjecture immediately implies a $D$-module formulation of the LG/CY correspondence (first described in \cite{CIR}).  This result requires no mention of equivariant Gromov--Witten theory, and is compatible with all integral structures.  In fact this was the first motivation for the current paper.

Another more recent application appears in  \cite{RoSho}, where R. Mi and the author us the results of this paper prove a comparison result for the Gromov--Witten theory of extremal transitions.

\subsection{Acknowledgments}  I am grateful to R.~Cavalieri, E.~Clader, J.~Gu\'er\'e, H.~Iritani, Y. P.~Lee, N.~Priddis, D.~Ross, Y.~Ruan and Y.~Shen for many useful conversations about Givental's symplectic formalism, quantum Serre duality, and quantum $D$-modules.   I also thank the anonymous referee for many helpful suggestions and comments.
This work was partially supported by NSF grant DMS-1708104.

\section{Narrow cohomology}\label{s:1}

Let $Y$ be a non-compact oriented manifold.  
Let $H^*(Y; \RR)$ and $H^*_{\op{c}}(Y; \RR)$ denote the de Rham cohomology and cohomology with compact support respectively.  We assume always that $Y$ has a finite good cover \cite[Section~5]{BT}, so that $\op{rank}(H_{\op{c}}^*(Y; \RR))$ and $\op{rank}(H^*(Y; \RR))$ are finite.
In this paper we will primarily be concerned with the case where $Y$ is the total space of a vector bundle on a compact manifold $X$.  In this case the existence of a finite good cover is automatic.

Let $\Omega^k(Y; \RR)$ and $\Omega^k_{\op{c}}(Y; \RR)$ denote the real vector space of $k$-forms and $k$-forms with compact support. 
We have the following commutative diagram:
\[
\begin{tikzcd}
0 \ar[r]  & \Omega^0_{\op{c}}(Y) \ar[r, "d"] \ar[d] & \Omega^1_{\op{c}}(Y) \ar[r, "d"] \ar[d]&  \Omega^2_{\op{c}}(Y) \ar[r, "d"] \ar[d] &\cdots \\
0 \ar[r] & \Omega^0(Y) \ar[r, "d"] & \Omega^1(Y) \ar[r, "d"] & \Omega^2(Y) \ar[r, "d"] & \cdots
\end{tikzcd} \]
where the vertical arrows are just the inclusion obtained by forgetting that a $k$-form had compact support.  This induce  ``forgetful'' maps $\phi_k: H^k_{\op{c}}(Y; \RR) \to H^k(Y; \RR)$ for $1 \leq k \leq \dim(Y)$.  Let $\phi: H^*_{\op{c}}(Y; \RR) \to H^*(Y; \RR)$ denote the direct sum of these maps.
\begin{defi}
We define the \emph{narrow} cohomology of $Y$ to be the image of $\phi$:
\[ H^k_{\op{nar}}(Y; \RR) := \op{im} \left( \phi_k: H^k_{\op{c}}(Y; \RR) \to H^k(Y; \RR) \right),\]
\[H^*_{\op{nar}}(Y; \RR) := \bigoplus_{k = 0}^{\dim(Y)} H^k_{\op{nar}}(Y; \RR) \subseteq H^*(Y; \RR).\]
\end{defi}
The subspace $\ker(\phi)$ consists of classes which can be represented by differential forms with compact support but which are boundaries of classes with non-compact support.
\begin{defi}
Given a class $\alpha \in H^k_{\op{nar}}(Y; \RR)$, we define a \emph{lift} of $\alpha$ to be any class $\tilde \alpha \in H^k_{\op{c}}(Y; \RR)$ such that $\phi_k(\tilde \alpha) = \alpha$.  Lifts are well defined up to a choice of degree $k$ element in $\ker(\phi)$.
\end{defi}

Using singular cohomology over an arbitrary coefficient ring $R$,
a completely analogous definition can be made via the inclusion 
\[ C^*_{\op{c}}(Y;R) \hookrightarrow C^*(Y; R).
\]
of the subchain complex $C^*_{\op{c}}(Y;R)$ of singular cochains with compact support into the chain complex $C^*(Y; R)$ of singular cochains with coefficients in $R$.  Via the induced map 
$\phi: H^*_{\op{c}}(Y; R) \to H^*(Y; R)$, one defines the narrow \emph{singular} cohomology
\[H^*_{\op{nar}}(Y; R) := \op{im}\left( \phi: H^*_{\op{c}}(Y; R) \to H^*(Y; R)\right).\]

It will be convenient to also introduce narrow homology.  This can be defined in terms of Borel--Moore homology (See e.g. Section~V of \cite{Br} for an exposition consistent with that described below).  Assume for simplicity that $Y$ is $\sigma$-compact to simplify our exposition.  Let $R$ be a ring and let $C_k(Y; R)$ denote the set of finite singular $k$-chains, consisting of \emph{finite} $R$-linear combinations 
\[\sum a_\sigma \sigma\]
of continuous maps $\sigma: \Delta^k \to Y$ with $a_\sigma \in R$.  
We define $C_k^{\op{BM}}(Y; R)$ to be the set of \emph{locally finite} singular $k$-chains, consisting of (possibly infinite)
$R$-linear combinations 
\[\sum a_\sigma \sigma\]
such that for each compact set $C \subset Y$, $a_\sigma$ is zero for all but finitely many of the maps $\sigma$ whose image meets $C$.  
In reasonable situations \cite{Br}, Borel--Moore homology can be defined as the homology of the complex $C_*^{\op{BM}}(Y; R)$.
Note we have a similar map of complexes as before
\[
\begin{tikzcd}
\cdots  \ar[r, "\partial"]& C_{i+1}(Y; R) \ar[r, "\partial"] \ar[d] & C_i(Y; R) \ar[r, "\partial"] \ar[d]&  C_{i-1}(Y; R) \ar[r, "\partial"] \ar[d] &\cdots \\
\cdots  \ar[r, "\partial"] &C_{i+1}^{\op{BM}}(Y; R) \ar[r, "\partial"] & C_i^{\op{BM}}(Y; R) \ar[r, "\partial"] & C_{i-1}^{\op{BM}}(Y; R) \ar[r, "\partial"] & \cdots
\end{tikzcd} \]
again inducing a ``forgetful'' map $\underline \phi: H_*(Y; R) \to H_*^{\op{BM}}(Y; R)$.
\begin{defi}
Define the \emph{narrow homology} to be the image of $\underline \phi$:
\[H_*^{\op{nar}}(Y; R):= \op{im}\left( \underline \phi: H_*(Y; R) \to H_*^{\op{BM}}(Y; R) \right).\]
\end{defi}


Poincar\'e duality \cite{Br} gives isomorphisms  
\begin{align*}
H^k_{\op{c}}(Y; \ZZ) &\cong H_{n-k}(Y; \ZZ) \\
H^k(Y; \ZZ) &\cong H_{n-k}^{\op{BM}}(Y; \ZZ).
\end{align*}
The following diagram commutes, 
\begin{equation}\label{e:PD}
\begin{tikzcd}
H^k_{\op{c}}(Y; \ZZ) \ar[r, "\op{PD}"] 
\ar[d, "\phi"] & H_{n-k}(Y; \ZZ) \ar[d, "\underline \phi"]  \\
H^k(Y; \ZZ) \ar[r, "\op{PD}"] & H_{n-k}^{\op{BM}}(Y; \ZZ),
\end{tikzcd}
\end{equation} 
as can be seen by observing that
both horizontal maps are given by capping with the fundamental class $[Y] \in H_n^{\op{BM}}(Y; \ZZ)$.  We immediately deduce the following.
\begin{lemma}\label{l:narPD}
The Poincar\'e duality isomorphism $H^k(Y; \ZZ) \cong H_{n-k}^{\op{BM}}(Y; \ZZ)$ induces an isomorphism
$H^k_{\op{nar}}(Y; \ZZ) \cong H_{n-k}^{\op{nar}}(Y; \ZZ)$.
\end{lemma}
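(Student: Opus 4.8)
The plan is to deduce the statement directly from the commutativity of diagram~\eqref{e:PD}, which has already been established. The key observation is that an isomorphism of chain-level (or cochain-level) maps fitting into a commuting square automatically restricts to an isomorphism of the images of the vertical arrows. Concretely, I would argue as follows. Write the square \eqref{e:PD} as $\op{PD}_{\op{c}} \colon H^k_{\op{c}}(Y;\ZZ) \xrightarrow{\sim} H_{n-k}(Y;\ZZ)$ on the top, $\op{PD} \colon H^k(Y;\ZZ) \xrightarrow{\sim} H_{n-k}^{\op{BM}}(Y;\ZZ)$ on the bottom, with $\phi$ on the left and $\underline\phi$ on the right, so that $\underline\phi \circ \op{PD}_{\op{c}} = \op{PD} \circ \phi$.

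First I would check the forward inclusion. An element of $H^k_{\op{nar}}(Y;\ZZ) = \op{im}(\phi)$ has the form $\phi(\tilde\alpha)$ for some $\tilde\alpha \in H^k_{\op{c}}(Y;\ZZ)$. Applying $\op{PD}$ and using commutativity, $\op{PD}(\phi(\tilde\alpha)) = \underline\phi(\op{PD}_{\op{c}}(\tilde\alpha)) \in \op{im}(\underline\phi) = H_{n-k}^{\op{nar}}(Y;\ZZ)$. Hence $\op{PD}$ carries $H^k_{\op{nar}}(Y;\ZZ)$ into $H_{n-k}^{\op{nar}}(Y;\ZZ)$. For the reverse inclusion I would run the same computation with the inverse isomorphisms: since $\op{PD}_{\op{c}}$ and $\op{PD}$ are isomorphisms, the square also commutes with $\op{PD}_{\op{c}}^{-1}$ and $\op{PD}^{-1}$ in place of $\op{PD}_{\op{c}}$ and $\op{PD}$, so any class $\underline\phi(\beta) \in H_{n-k}^{\op{nar}}(Y;\ZZ)$ satisfies $\op{PD}^{-1}(\underline\phi(\beta)) = \phi(\op{PD}_{\op{c}}^{-1}(\beta)) \in \op{im}(\phi) = H^k_{\op{nar}}(Y;\ZZ)$. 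Therefore $\op{PD}$ restricts to a bijection $H^k_{\op{nar}}(Y;\ZZ) \to H_{n-k}^{\op{nar}}(Y;\ZZ)$, and since it is the restriction of a group isomorphism it is itself an isomorphism of abelian groups.

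There is essentially no hard part here; the content is entirely in the already-proven commutativity of \eqref{e:PD} and in the fact that Poincaré duality is an isomorphism in both the compactly supported and the Borel--Moore settings, which is quoted from \cite{Br}. The one point that deserves a word of care is that the two horizontal maps in \eqref{e:PD} are genuinely the \emph{same} operation (capping with $[Y] \in H_n^{\op{BM}}(Y;\ZZ)$) at the (co)chain level, which is what makes the diagram commute on the nose rather than merely up to sign; this has already been invoked in the text just before Lemma~\ref{l:narPD}, so I would simply reference it. Thus the proof is a one-paragraph diagram chase.
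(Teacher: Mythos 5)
Your proof is correct and is essentially the paper's own argument: the paper states the lemma as an immediate consequence of the commutativity of diagram~\eqref{e:PD}, and your diagram chase (forward inclusion via commutativity, reverse inclusion via the inverse isomorphisms) is precisely what that ``immediately deduce'' is pointing at. Nothing to add.
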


\begin{prop}\label{p:pp}
Let $f: X \to Y$ be a smooth, proper, oriented map between the manifolds $X$ and $Y$.  There exist induced
pullback and pushforward maps 
\begin{align*}
f^*_{\op{nar}}: &H^*_{\op{nar}}(Y; \ZZ) \to H^*_{\op{nar}}(X; \ZZ)\\
f_*^{\op{nar}}:& H^*_{\op{nar}}(X; \ZZ) \to H^*_{\op{nar}}(Y; \ZZ).
\end{align*}
\end{prop}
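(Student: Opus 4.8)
The plan is to construct $f^*_{\op{nar}}$ and $f^{\op{nar}}_*$ by restricting the usual (co)homological pullback and pushforward maps, and then verify that these restrictions land in the narrow subspaces. For the pullback, recall that an ordinary pullback $f^*: H^*(Y;\ZZ)\to H^*(X;\ZZ)$ always exists, and since $f$ is proper there is also a pullback with compact supports $f^*_{\op{c}}: H^*_{\op{c}}(Y;\ZZ)\to H^*_{\op{c}}(X;\ZZ)$ (a proper map allows one to pull back compactly supported cochains, as the preimage of a compact set is compact). The key point is then the commutativity of the square with vertical maps $\phi$ (for $Y$ and for $X$) and horizontal maps $f^*_{\op{c}}$ and $f^*$; this holds at the cochain level because forgetting compact support commutes with the pullback of cochains. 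Given this square, one defines $f^*_{\op{nar}}$ on $\alpha\in H^*_{\op{nar}}(Y;\ZZ)$ by picking a lift $\tilde\alpha\in H^*_{\op{c}}(Y;\ZZ)$, setting $f^*_{\op{nar}}(\alpha):=\phi_X\big(f^*_{\op{c}}(\tilde\alpha)\big)=f^*(\phi_Y(\tilde\alpha))=f^*(\alpha)$; the middle equality shows this lies in $H^*_{\op{nar}}(X;\ZZ)$, and the outer equality shows it is independent of the choice of lift (since it equals the well-defined $f^*(\alpha)$).

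For the pushforward I would argue most cleanly through Borel--Moore homology, using Lemma~\ref{l:narPD}. A smooth proper oriented map $f$ of relative dimension $d$ induces a Gysin/pushforward on ordinary homology $f_*: H_*(X;\ZZ)\to H_{*}(Y;\ZZ)$ (in fact, on homology, pushforward along any continuous map exists; here properness is not even needed for the ordinary version) and, because $f$ is proper, also on Borel--Moore homology $f^{\op{BM}}_*: H_*^{\op{BM}}(X;\ZZ)\to H_*^{\op{BM}}(Y;\ZZ)$ — properness guarantees that the image of a locally finite chain is again locally finite. These fit into a commutative square with the forgetful maps $\underline\phi$, since the chain-level pushforward $\sigma\mapsto f\circ\sigma$ manifestly commutes with the inclusion of finite chains into locally finite ones. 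Hence $f^{\op{BM}}_*$ carries $\op{im}(\underline\phi_X)=H_*^{\op{nar}}(X;\ZZ)$ into $\op{im}(\underline\phi_Y)=H_*^{\op{nar}}(Y;\ZZ)$. Transporting this through the Poincar\'e duality isomorphisms of Lemma~\ref{l:narPD} — and using that the smoothness and orientation of $f$ make Poincar\'e duality intertwine the cohomological pushforward with the Borel--Moore pushforward — yields the desired map $f^{\op{nar}}_*: H^*_{\op{nar}}(X;\ZZ)\to H^*_{\op{nar}}(Y;\ZZ)$, well-defined because it is the restriction of the genuine pushforward $f_*$ on $H^*(X;\ZZ)$.

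The main obstacle is bookkeeping rather than conceptual: one must be careful that each of the four maps ($f^*$, $f^*_{\op{c}}$, $f_*$, $f^{\op{BM}}_*$) is defined at the cochain/chain level in a way compatible with the forgetful maps, so that the two squares genuinely commute and the definitions descend. Properness of $f$ is exactly what is needed to make the compactly supported pullback and the Borel--Moore pushforward exist; smoothness and orientedness of $f$ are what make the Poincar\'e duality square (analogous to \eqref{e:PD}, but for the map $f$ rather than the identity) commute, so that the homological and cohomological pushforwards correspond. I would also remark that, once well-definedness is established, functoriality ($(\op{id})^*_{\op{nar}}=\op{id}$, $(g\circ f)^*_{\op{nar}}=f^*_{\op{nar}}\circ g^*_{\op{nar}}$, and likewise for pushforward) follows immediately from the corresponding properties of $f^*$ and $f_*$ on ordinary (co)homology, since the narrow maps are simply restrictions of those.
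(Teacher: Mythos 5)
Your proposal is correct and takes essentially the same route as the paper: for the pullback, properness yields $f^*_{\op{c}}$, and the commutative square with the forgetful maps $\phi$ shows the ordinary pullback $f^*$ restricts to the narrow subspace; for the pushforward, one works in (Borel--Moore) homology, where properness gives $f^{\op{BM}}_*$, the commutative square with $\underline\phi$ shows it preserves narrow homology, and Poincar\'e duality (via Lemma~\ref{l:narPD}) transports this to narrow cohomology. The only cosmetic differences are that you spell out independence of the lift explicitly (which the paper leaves implicit by observing that the narrow maps are restrictions of the ambient ones) and that the phrase ``Gysin/pushforward'' for the ordinary pushforward on singular homology is slightly misleading — that map exists for any continuous $f$ and is not a Gysin map; smoothness and orientedness enter only through Poincar\'e duality for $X$ and $Y$, exactly as you later note.
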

\begin{proof}
Because $f$ is proper, there is a pullback with compact support  
\[f^*_{\op{c}}: H^*_{\op{c}}(Y; \ZZ) \to H^*_{\op{c}}(X; \ZZ)\] in addition to the usual pullback on cohomology
\[f^*:H^*(Y; \ZZ) \to H^*(X; \ZZ).\]  
The fact that $f^*$ induces a pullback on the narrow subspace follows from the commutative diagram
\[\begin{tikzcd}
H^*_{\op{c}}(Y; \ZZ) \ar[r, "f^*_{\op{c}}"] \ar[d, "\phi"]& H^*_{\op{c}}(X; \ZZ)\ar[d, "\phi"]\\
H^*(Y; \ZZ) \ar[r, "f^*"] & H^*(X; \ZZ).
\end{tikzcd}\]

The pushforward map \[f_*^{\op{c}}: H^*_{\op{c}}(X; \ZZ) \to H^*_{\op{c}}(Y; \ZZ)\] is defined via Poincar\'e duality  together with the pushforward on homology.  
Because $f$ is proper, there is a well defined pushforward map 
\[f_*^{\op{BM}}: H_*^{\op{BM}}(X; \ZZ) \to H_*^{\op{BM}}(Y; \ZZ).\]  Composing with Poincar\'e duality defines a pushforward
\[f_*: H^*(X; \ZZ) \to H^*(Y; \ZZ).\]  The following diagram commutes,
\[\begin{tikzcd}
H_*(X; \ZZ) \ar[r, "f_*"] \ar[d, "\underline \phi"]& H_*(Y; \ZZ)\ar[d, "\underline \phi"]\\
H_*^{\op{BM}}(X; \ZZ) \ar[r, "f_*^{\op{BM}}"] & H_*^{\op{BM}}(Y; \ZZ).
\end{tikzcd}\] 
By invoking Poincar\'e duality and \eqref{e:PD}, we see that $\phi \circ f_*^{\op{c}} = f_*^{\op{c}} \circ \phi$, therefore
$f_*$ induces a pushforward on the narrow subspaces.

\end{proof}

\subsection{Pairing}
To simplify proofs, we will focus on the de Rham definition of narrow cohomology in what follows.
The wedge product on differential forms induces the cup products 
\begin{align} \label{e:prod}
\cup: &H^i(Y; \RR) \times H^j(Y; \RR) \to H^{i + j}(Y; \RR), \\
\cup: &H^i_{\op{c}}(Y; \RR) \times H^j(Y; \RR) \to H^{i + j}_{\op{c}}(Y; \RR). \nonumber
\end{align}
\begin{lemma}\label{l:cupzero}
The cup product is zero when restricted to $\ker(\phi) \times H^*_{\op{nar}}(Y; \RR) \subset H^*_{\op{c}}(Y; \RR) \times H^*(Y; \RR)$.
\end{lemma}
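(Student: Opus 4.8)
The plan is to reduce the statement to a computation with differential forms using the de Rham model for both cohomology with compact support and ordinary cohomology. Let $\tilde\beta \in \ker(\phi) \subset H^*_{\op{c}}(Y; \RR)$ and let $\alpha \in H^*_{\op{nar}}(Y; \RR)$. We must show $\tilde\beta \cup \alpha = 0$ in $H^*_{\op{c}}(Y; \RR)$. First I would represent $\tilde\beta$ by a closed compactly supported form $\beta \in \Omega^*_{\op{c}}(Y)$. Since $\tilde\beta \in \ker(\phi)$, the image of $\beta$ in $\Omega^*(Y)$ is exact, so there exists $\gamma \in \Omega^*(Y)$ (not necessarily compactly supported) with $d\gamma = \beta$.

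Next, since $\alpha$ is narrow, choose a lift: $\alpha$ is represented by some closed form $a \in \Omega^*(Y)$ which equals the image of a closed compactly supported form $\tilde a \in \Omega^*_{\op{c}}(Y)$ up to exact forms; in fact for computing the cup product with values in $H^*_{\op{c}}(Y)$ we may take $a$ itself to have compact support, i.e.\ replace $a$ by $\tilde a$ and note this does not change the class $\alpha$ in $H^*(Y)$ nor the product $\tilde\beta\cup\alpha$ in $H^*_{\op{c}}(Y)$ (this uses that the second product in \eqref{e:prod} is well-defined on cohomology, which I would verify directly: changing the $H^*(Y)$-factor by an exact form $d\eta$ changes $\beta \wedge d\eta$ by $\pm d(\beta \wedge \eta)$, still compactly supported). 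Now compute in $\Omega^*_{\op{c}}(Y)$:
\[
\beta \wedge \tilde a = d\gamma \wedge \tilde a = \pm\, d(\gamma \wedge \tilde a) \mp \gamma \wedge d\tilde a = \pm\, d(\gamma \wedge \tilde a),
\]
since $\tilde a$ is closed. The key point is that $\gamma \wedge \tilde a$ has compact support (because $\tilde a$ does), so it is a valid primitive in the compactly supported complex, and therefore $\beta \wedge \tilde a$ is exact in $\Omega^*_{\op{c}}(Y)$. Hence $\tilde\beta \cup \alpha = 0$ in $H^*_{\op{c}}(Y; \RR)$.

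The only subtlety — and the step I would be most careful about — is the interchange of representatives for $\alpha$: passing from an arbitrary closed representative $a$ of the narrow class to a compactly supported one $\tilde a$, and checking that the product $\tilde\beta \cup \alpha \in H^*_{\op{c}}(Y;\RR)$ is computed correctly by $\beta \wedge \tilde a$. This is genuinely the heart of the lemma, since it is exactly the asymmetry between the two factors (one forced to be compactly supported a priori, the other only "narrow") that makes the product vanish: the primitive $\gamma$ of $\beta$ need not be compactly supported, but wedging with the compactly supported $\tilde a$ restores compact support of the primitive. I would also remark that the same argument works mutatis mutandis for the singular cochain model, or alternatively deduce the general-coefficient case from the real case together with naturality, but since the paper has announced it will work with the de Rham model, the argument above suffices.
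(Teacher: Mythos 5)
Your proof is correct and takes essentially the same approach as the paper's: represent the $\ker(\phi)$ class by a compactly supported closed form $\beta$ with (non-compactly supported) primitive $\gamma$, represent the narrow class by a compactly supported closed form, and observe that the wedge with the primitive is compactly supported. The subtlety you flag — justifying the replacement of an arbitrary representative of the narrow class by a compactly supported one when computing the product $H^*_{\op{c}} \times H^* \to H^*_{\op{c}}$ — is indeed glossed over in the paper's proof, so your extra care is a modest improvement in rigor rather than a divergence in method.
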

\begin{proof}
Let $\Omega$ denote a closed differential form representing a class $\omega$ in $\ker(\phi)$.  Then by definition of $\ker(\phi)$ there exists a form $\Psi$ (potentially with non-compact support) such that $d \Psi = \Omega$.  If $\Theta$ is a closed form with compact support representing $\theta \in H^*_{\op{nar}}(Y; \RR)$, then
\[ d(\Theta \wedge \Psi) = \Theta \wedge d\Psi = \Theta \wedge \Omega.\] Since the support of $\Theta \wedge \Psi$ is contained in the support of $\Theta$ and is therefore compact, we conclude that 
$\theta \cup \omega = 0 \in H^*_{\op{c}}(Y; \RR)$.
%
\end{proof}
Of course the wedge product also induces a cup product
\[\cup: H^i_{\op{c}}(Y; \RR) \times H^j_{\op{c}}(Y; \RR) \to H^{i + j}_{\op{c}}(Y; \RR).\]
It is clear from the definitions that for $\alpha, \beta \in H^*_{\op{c}}(Y; \RR)$,
\begin{equation}\label{e:phicup}
\alpha \cup \beta = \alpha \cup \phi(\beta).
\end{equation}

We let 
\[\langle - , - \rangle: H^*(Y; \RR) \times H^*_{\op{c}}(Y; \RR) \to \RR\]
denote the pairing defined by 
\[ \langle \alpha , \beta \rangle := \int_Y \alpha \cup \beta.\]
This is well-defined because $\beta$ and therefore $\alpha \cup \beta$ are compactly supported.  It is known to be non-degenerate \cite{BT}.  
\begin{coro}\label{c:pairperp}  With respect to the pairing $\langle - , - \rangle$, $H^*_{\op{nar}}(Y; \RR) = \ker(\phi)^\perp$.
\end{coro}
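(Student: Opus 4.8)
The plan is to prove the two inclusions $H^*_{\op{nar}}(Y;\RR)\subseteq\ker(\phi)^\perp$ and $\ker(\phi)^\perp\subseteq H^*_{\op{nar}}(Y;\RR)$ separately: the first follows immediately from Lemma~\ref{l:cupzero}, and the second from a dimension count using the non-degeneracy of $\langle-,-\rangle$ together with Poincar\'e duality.

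For the first inclusion, fix $\theta\in H^*_{\op{nar}}(Y;\RR)$ and $\omega\in\ker(\phi)$. Since $\ker(\phi)$ and $H^*_{\op{nar}}(Y;\RR)$ are graded subspaces and $\int_Y$ annihilates all but top degree, it suffices to check $\langle\theta,\omega\rangle=0$ when $\theta$ and $\omega$ are homogeneous of complementary degrees; in that case $\theta\cup\omega$ and $\omega\cup\theta$ differ only by a sign in $H^*_{\op{c}}(Y;\RR)$, and $\omega\cup\theta=0$ by Lemma~\ref{l:cupzero}. Hence $\langle\theta,\omega\rangle=\int_Y\theta\cup\omega=0$, so $\theta\in\ker(\phi)^\perp$.

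For the reverse inclusion I would argue by dimensions, all the cohomology groups in sight being finite-dimensional because $Y$ admits a finite good cover. Non-degeneracy of the pairing $\langle-,-\rangle$ on $H^*(Y;\RR)\times H^*_{\op{c}}(Y;\RR)$ gives $\dim_\RR\ker(\phi)^\perp=\dim_\RR H^*(Y;\RR)-\dim_\RR\ker(\phi)$; Poincar\'e duality gives $\dim_\RR H^*(Y;\RR)=\dim_\RR H^*_{\op{c}}(Y;\RR)$; and rank--nullity for $\phi$ gives $\dim_\RR H^*_{\op{c}}(Y;\RR)-\dim_\RR\ker(\phi)=\dim_\RR\op{im}(\phi)=\dim_\RR H^*_{\op{nar}}(Y;\RR)$. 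Chaining these three equalities shows $\dim_\RR\ker(\phi)^\perp=\dim_\RR H^*_{\op{nar}}(Y;\RR)$, which combined with the first inclusion forces $\ker(\phi)^\perp=H^*_{\op{nar}}(Y;\RR)$.

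I do not expect a genuine obstacle here --- the statement really is a corollary of Lemma~\ref{l:cupzero}. The only points needing a little care are that the pairing restricts to a perfect pairing between the complementary graded pieces $H^k(Y;\RR)$ and $H^{n-k}_{\op{c}}(Y;\RR)$, so that the dimension bookkeeping is legitimate, and the sign when interchanging $\theta\cup\omega$ with $\omega\cup\theta$ in the first paragraph. One could instead give a basis-free argument: using \eqref{e:phicup} and graded-commutativity one checks that $\langle\phi(\gamma),\beta\rangle=\pm\langle\phi(\beta),\gamma\rangle$, so that under the isomorphism $H^*(Y;\RR)\cong H^*_{\op{c}}(Y;\RR)^\vee$ induced by $\langle-,-\rangle$ the subspace $H^*_{\op{nar}}(Y;\RR)=\op{im}(\phi)$ corresponds precisely to the annihilator of $\ker(\phi)$; this explains conceptually why the corollary holds, but the dimension count is shorter to write down.
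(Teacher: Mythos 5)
Your proof is correct and follows essentially the same route as the paper: one inclusion from Lemma~\ref{l:cupzero}, then a rank count to force equality. The paper's version is terser (it just says the two subspaces have the same rank), whereas you spell out the dimension bookkeeping via non-degeneracy, Poincar\'e duality and rank--nullity, and you also flag the harmless graded-commutativity sign that the paper silently absorbs; none of this changes the substance.
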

\begin{proof}
By Lemma~\ref{l:cupzero}, $H^*_{\op{nar}}(Y; \RR) \subseteq \ker(\phi)^\perp$.  However they are the same rank and so must be equal.
\end{proof}

Given  two classes $\alpha$ and $\beta$ in $H^*_{\op{nar}}(Y; \RR)$, \emph{a-priori} the product $\alpha \cup \beta$ lies in $H^*(Y; \RR)$.  It is clear that $\alpha \cup \beta$ in fact lies in $H^*_{\op{nar}}(Y; \RR)$, so the narrow state space inherits a ring structure from $H^*(Y; \RR)$.

We can refine this product to obtain a class
$\alpha \cup_{\op{c}} \beta$ lying in $H^*_{\op{c}}(Y; \RR)$ as follows.
\begin{defi}\label{d:cupcom}
Given $\alpha$ and $\beta$ in $H^*_{\op{nar}}(Y; \RR)$, define the \emph{compactly supported} cup product of $\alpha$ and $\beta$ to be
\[\alpha \cup_{\op{c}} \beta := \tilde \alpha \cup \beta \in H^*_{\op{c}}(Y; \RR),\]
where $\tilde \alpha$ is a lift of $\alpha$.
\end{defi}
\begin{coro}
The product described above is well-defined.
\end{coro}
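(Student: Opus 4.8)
The plan is to verify the single point that requires checking, namely that $\tilde\alpha \cup \beta \in H^*_{\op{c}}(Y; \RR)$ does not depend on the choice of lift $\tilde\alpha$ of $\alpha$; the fact that $\tilde\alpha \cup \beta$ lands in $H^*_{\op{c}}(Y;\RR)$ is immediate from the second line of \eqref{e:prod}, since $\tilde\alpha$ has compact support.

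First I would recall that, by the definition of a lift, any two lifts $\tilde\alpha$ and $\tilde\alpha'$ of $\alpha$ satisfy $\phi(\tilde\alpha - \tilde\alpha') = \alpha - \alpha = 0$, so $\omega := \tilde\alpha - \tilde\alpha' \in \ker(\phi)$. Then I would compute, using bilinearity of the cup product $\cup: H^*_{\op{c}}(Y;\RR) \times H^*(Y;\RR) \to H^*_{\op{c}}(Y;\RR)$,
\[
\tilde\alpha \cup \beta - \tilde\alpha' \cup \beta = \omega \cup \beta.
\]
Since $\omega \in \ker(\phi)$ and $\beta \in H^*_{\op{nar}}(Y;\RR)$, Lemma~\ref{l:cupzero} gives $\omega \cup \beta = 0 \in H^*_{\op{c}}(Y;\RR)$. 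Hence $\tilde\alpha \cup \beta = \tilde\alpha' \cup \beta$, and $\alpha \cup_{\op{c}} \beta$ is well-defined.

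There is essentially no obstacle here: the statement is an immediate corollary of Lemma~\ref{l:cupzero} together with the observation that the ambiguity in the lift is precisely a class in $\ker(\phi)$ (as recorded in the definition of a lift). If one wanted to be thorough one could also note bilinearity in $\beta$ and the compatibility \eqref{e:phicup} show the construction interacts well with $\phi$ (indeed $\phi(\alpha \cup_{\op c}\beta) = \alpha \cup \beta$), but this is not needed for well-definedness.
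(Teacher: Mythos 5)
Your proof is correct and follows the paper's argument essentially verbatim: both note that the difference of two lifts lies in $\ker(\phi)$ and then invoke Lemma~\ref{l:cupzero} to conclude that the ambiguity cups to zero against $\beta \in H^*_{\op{nar}}(Y;\RR)$.
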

\begin{proof}
The class $\tilde \alpha$ is well-defined up to a choice of elements in $\ker(\phi)$.  If $\tilde \alpha '$ is a different lift, then 
\[\tilde \alpha '  = \tilde \alpha + \alpha_k\]
where $\alpha_k \in \ker(\phi)$.
By Lemma~\ref{l:cupzero}, $\alpha_k \cup \beta = 0$ in $H^*_{\op{c}}(Y; \RR)$, so 
$\tilde \alpha' \cup \beta = \tilde \alpha \cup \beta$. 
\end{proof}
With the above we can equip $H^*_{\op{nar}}(Y; \RR)$ with a pairing.
\begin{defi}
Given $\alpha$ and $\beta$ in $H^*_{\op{nar}}(Y; \RR)$, define
\[\langle \alpha, \beta \rangle^{\op{nar}} := \int_Y \alpha \cup_{\op{c}} \beta.\]
\end{defi}
\begin{prop}\label{p:nondegpai}
The pairing $\langle - , - \rangle^{\op{nar}}$ is nondegenerate.
\end{prop}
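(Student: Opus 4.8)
The plan is to derive nondegeneracy of $\langle-,-\rangle^{\op{nar}}$ from the (known) nondegeneracy of the Poincar\'e pairing $\langle-,-\rangle$ between $H^*(Y;\RR)$ and $H^*_{\op{c}}(Y;\RR)$, using Corollary~\ref{c:pairperp}. The key preliminary observation is that, unwinding Definition~\ref{d:cupcom} and the definition of the narrow pairing, for $\alpha,\beta\in H^*_{\op{nar}}(Y;\RR)$ and any lift $\tilde\alpha$ of $\alpha$ one has
\[
\langle\alpha,\beta\rangle^{\op{nar}} \;=\; \int_Y\tilde\alpha\cup\beta \;=\; \langle\beta,\tilde\alpha\rangle ,
\]
the right-hand side being the pairing between $H^*(Y;\RR)$ and $H^*_{\op{c}}(Y;\RR)$ (here $\beta\in H^*_{\op{nar}}(Y;\RR)\subseteq H^*(Y;\RR)$ and $\tilde\alpha\in H^*_{\op{c}}(Y;\RR)$). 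I would also record that $\langle-,-\rangle^{\op{nar}}$ is graded-symmetric: combining \eqref{e:phicup} with graded-commutativity of $\cup$ gives $\int_Y\tilde\alpha\cup\beta=\pm\int_Y\tilde\beta\cup\alpha$, so it is enough to verify that the pairing has trivial kernel in one variable.

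Next, suppose $\alpha\in H^*_{\op{nar}}(Y;\RR)$ satisfies $\langle\alpha,\beta\rangle^{\op{nar}}=0$ for all $\beta\in H^*_{\op{nar}}(Y;\RR)$. The displayed identity gives $\langle\beta,\tilde\alpha\rangle=0$ for every $\beta\in H^*_{\op{nar}}(Y;\RR)$, and by Corollary~\ref{c:pairperp} we have $H^*_{\op{nar}}(Y;\RR)=\ker(\phi)^\perp$. Thus $\tilde\alpha\in\bigl(\ker(\phi)^\perp\bigr)^\perp$. Because $Y$ has a finite good cover, $H^*(Y;\RR)$ and $H^*_{\op{c}}(Y;\RR)$ are finite-dimensional and $\langle-,-\rangle$ is a perfect pairing between them, so the double-annihilator identity $\bigl(\ker(\phi)^\perp\bigr)^\perp=\ker(\phi)$ applies; hence $\tilde\alpha\in\ker(\phi)$ and $\alpha=\phi(\tilde\alpha)=0$.

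This is really a linear-algebra argument, so I do not anticipate a genuine obstacle. The two points demanding a little care are: (i) keeping straight in which ambient space each $\perp$ is taken — since $\langle-,-\rangle$ pairs the two distinct spaces $H^*(Y)$ and $H^*_{\op{c}}(Y)$, we have $\ker(\phi)\subseteq H^*_{\op{c}}(Y)$ while $\ker(\phi)^\perp\subseteq H^*(Y)$, and the outer $\perp$ lands back in $H^*_{\op{c}}(Y)$; and (ii) invoking the finite-good-cover hypothesis so that $\langle-,-\rangle$ is perfect and the double-annihilator step is legitimate. An alternative route avoids Corollary~\ref{c:pairperp} entirely: by graded symmetry it suffices to treat the case $\langle\beta,\alpha\rangle^{\op{nar}}=0$ for all $\beta$, which by the displayed identity reads $\int_Y\tilde\beta\cup\alpha=0$ for all $\beta$ and all lifts $\tilde\beta$; since as $\beta$ ranges over $H^*_{\op{nar}}(Y;\RR)=\op{im}(\phi)$ its lifts exhaust $H^*_{\op{c}}(Y;\RR)$ (because $\phi^{-1}(\op{im}\phi)=H^*_{\op{c}}(Y;\RR)$), this says $\alpha$ pairs trivially with all of $H^*_{\op{c}}(Y;\RR)$, whence $\alpha=0$ by nondegeneracy of $\langle-,-\rangle$.
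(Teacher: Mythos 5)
Your proposal is correct, and it actually contains two valid arguments. Your main route is genuinely different from the paper's: you invoke Corollary~\ref{c:pairperp} to identify $H^*_{\op{nar}}(Y;\RR)$ with $\ker(\phi)^\perp$ and then run a double-annihilator argument in the finite-dimensional perfect pairing $\langle-,-\rangle$ between $H^*(Y;\RR)$ and $H^*_{\op{c}}(Y;\RR)$, concluding $\tilde\alpha\in\bigl(\ker(\phi)^\perp\bigr)^\perp=\ker(\phi)$ and hence $\alpha=\phi(\tilde\alpha)=0$. The paper's proof is more direct and never touches Corollary~\ref{c:pairperp}: given a nonzero $\beta\in H^*_{\op{nar}}(Y;\RR)\subset H^*(Y;\RR)$, nondegeneracy of $\langle-,-\rangle$ produces some $\tilde\alpha\in H^*_{\op{c}}(Y;\RR)$ with $\langle\tilde\alpha,\beta\rangle\neq 0$, and then $\langle\phi(\tilde\alpha),\beta\rangle^{\op{nar}}=\int_Y\tilde\alpha\cup\beta\neq 0$, so $\phi(\tilde\alpha)\in H^*_{\op{nar}}(Y;\RR)$ witnesses nondegeneracy. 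Your ``alternative route'' at the end of the proposal is essentially this same argument in contrapositive form (lifts of elements of $\op{im}(\phi)$ exhaust $H^*_{\op{c}}(Y;\RR)$), so you have in effect rediscovered the paper's proof there. What your main route buys is a conceptual framing in terms of orthogonal complements, at the cost of using more machinery; the paper's version is shorter and self-contained. Both are sound, and your careful bookkeeping of which ambient space each $\perp$ lives in is exactly the right thing to check.
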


\begin{proof}
Given a nonzero element $\beta \in H^*_{\op{nar}}(Y; \RR)$, there exists an element $\tilde \alpha \in H^*_{\op{c}}(Y; \RR)$ 
which pairs non-trivially with $\beta$.
By definition, 
\[\langle \phi(\tilde \alpha), \beta\rangle^{\op{nar}} =  \int_Y\phi(\tilde  \alpha) \cup_{\op{c}} \beta = \int_Y \tilde \alpha \cup \beta = \langle \tilde \alpha, \beta\rangle \neq 0.\]
\end{proof}

\subsection{Orbifolds}\label{ss:orb}
More generally, 
let $\cY$ be an oriented orbifold in the sense of \cite{ALR}.  Assume $\cY$ admits a finite good cover \cite{ALR}.  Let $I\cY$ denote the \emph{inertia stack} of $\cY$, defined by the fiber diagram
\[
\begin{tikzcd}
I\cY \ar[r]\ar[d] & \cY \ar[d, "\Delta"] \\
\cY \ar[r, "\Delta"] & \cY \times \cY.
\end{tikzcd}
\]
The orbifold $I\cY$ is a disjoint union of connected components called twisted sectors.
These components are indexed by equivalence classes $\{\gamma\}_T$ of isotropy elements.
The connected component indexed by $\gamma$ is referred to as the $\gamma$-twisted sector and will be denoted by $\cY_\gamma$.   There is an involution 
\[I: I\cY \to I\cY\] exchanging twisted sectors with their inverses.
In the case $\cY = [V/\Gamma]$, the involution $I$ maps $\cY_\gamma$ to $\cY_{\gamma^{-1}}$ via the natural isomorphism.  

Let $\cY_\gamma$ be a twisted sector.  Take a point $(y, \gamma) \in \cY_\gamma$.  The tangent space $Ty\cY$ splits as a direct sum of eigenspaces with respect to the action of $\gamma$:
\[ T_y\cY = \bigoplus_{0 \leq f <1} (T_y\cY)_f\]
where $\gamma$ acts on $(T_y\cY)_f$ by multiplication by $e^{2 \pi \sqrt{-1} f}$. Define the \emph{age shift} for $\cY_\gamma$ to be
\[ \iota_\gamma = \sum_{0 \leq f < 1} f \dim_\CC (T_y\cY)_f.\]
\begin{defi}\cite{ChenR1, ALR}
The \emph{Chen--Ruan cohomology} of $\cY$ of (Chen--Ruan) degree $k$ is
\[H^k_{\op{CR}}(\cY):= \bigoplus_{\gamma \in T} H^{k - 2 \iota_\gamma}(\cY_\gamma; \CC),\]  
and 
\[H^*_{\op{CR}}(\cY) := \bigoplus_{k \in \QQ_{\geq 0}} H^k_{\op{CR}}(\cY).\]
Note that we will always use complex coefficients in what follows.  Define the \emph{compactly supported Chen--Ruan cohomology} 
$H^*_{\op{CR}, \op{c}}(\cY)$ analogously.
Define the \emph{Chen--Ruan pairing} 
\[ \langle \alpha, \beta \rangle^\cY :H^*_{\op{CR}}(\cY) \times H^*_{\op{CR}, \op{c}}(\cY) \to \CC\]
to be
\[\langle \alpha, \beta \rangle^\cY := \int_{I\cY} \alpha \cup I_*(\beta).\]

\end{defi}
As in the previous section, let $\phi: H^*_{\op{CR}, \op{c}}(\cY)  \to H^*_{\op{CR}}(\cY)$ denote the natural map.

\begin{defi}
Define the \emph{narrow} subspace of $H^*_{\op{CR}}(\cY)$ to be
\[H^*_{\op{CR}, \op{nar}}(\cY) := \op{im}(\phi).\]
Denote by $\ker(\phi)$ the kernel in $ H^*_{\op{CR}, \op{c}}(\cY)$.
Given $\alpha$ and $\beta$ in $H^*_{\op{CR}, \op{nar}}(\cY)$, define
\[ \langle \alpha, \beta\rangle^{\cY, \op{nar}} := \int_{I\cY} \alpha \cup_{\op{c}} I_*(\beta).\]
\end{defi}
By orbifold versions of the same arguments, Lemma~\ref{l:narPD},  Proposition~\ref{p:pp}, Corollary~\ref{c:pairperp}, and Proposition~\ref{p:nondegpai} also hold for the narrow Chen--Ruan cohomology of an orbifold.
%

\begin{rema}\label{remaCR}
The correct cup product to use in this setting is the \emph{Chen--Ruan cup product} \cite{ChenR1}.  Unless otherwise specified, all products of classes in Chen--Ruan cohomology in this paper are understood to be Chen--Ruan cup products.
The definition will be given in Definition~\ref{CRprod}.  For now we simply note the following fact \cite[Lemma~2.3.8]{Ts}.
For $\alpha \in H^*(\cY) \subset H^*_{\op{CR}}(\cY)$ supported in the untwisted sector and any $\beta \in  H^*_{\op{CR}}(\cY)$,
\[\alpha \cup \beta = q*(\alpha) \cup_{I\cY} \beta \in H^*_{\op{CR}}(\cY),\]
where $q: I\cY\to \cY$ is the natural map and $\cup_{I\cY}$ denotes the usual cup product on $H^*(I\cY)$.  As a consequence of the above fact, the fundamental class on the untwisted sector $1 \in H^*(\cY) \subset H^*_{\op{CR}}(\cY)$ is the multiplicative  identity with respect to the Chen--Ruan cup product.
\end{rema}

\subsection{The total space example}
Let $\cX$ be a compact oriented orbifold of dimension $n$ and let $\cE \to \cX$ be a complex vector bundle of complex rank $r$ over $\cX$.  
Let $\cY$ denote the total space of $\cE^\vee$ over $\cX$ (we use $\cE^\vee$ only to be consistent with later sections), endowed with the orientation induced by the orientation on $\cX$.  Let $i: I\cX \to I\cY$ denote the inclusion via the zero section.  This is a proper, oriented map.   Let $\pi: I\cY \to I\cX$ denote the projection.
\begin{prop}\label{p:tot}  The following are equal:
\[H^*_{\op{CR}, \op{nar}}(\cY) = \op{im}\left(i_*: H^*_{\op{CR}}(\cX) \to H^*_{\op{CR}, \op{nar}}(\cY) \right).
\]
If $\cE$ is  pulled back from a vector bundle $E \to X$ on the coarse space of $\cX$, then this is equal to 
\[\op{im}\left( e(\pi^*\cE^\vee) \cup - \right)\]
where $ e(\pi^*\cE^\vee) \cup - : H^*_{\op{CR}}(\cY) \to H^*_{\op{CR}}(\cY)$ is the (Chen-Ruan) cup product with the Euler class of $\pi^*\cE^\vee \in H^*(\cY) \subset H^*_{\op{CR}}(\cY)$.
\end{prop}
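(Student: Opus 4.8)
The plan is to prove the two equalities separately. For the first equality, $H^*_{\op{CR},\op{nar}}(\cY) = \op{im}(i_*)$, I would work with compactly supported Chen--Ruan cohomology and use the Thom isomorphism. Since $\cY = \tot(\cE^\vee)$ deformation retracts onto $\cX$ (the retraction being $\pi$), we have $\pi^*: H^*_{\op{CR}}(\cX) \xrightarrow{\sim} H^*_{\op{CR}}(\cY)$. On the other hand, the Thom isomorphism gives $H^*_{\op{CR}}(\cX) \xrightarrow{\sim} H^{*+2r}_{\op{CR},\op{c}}(\cY)$, realized precisely by the zero-section pushforward $i^{\op c}_*$ (taking values in compactly supported cohomology). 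The forgetful map $\phi: H^*_{\op{CR},\op{c}}(\cY) \to H^*_{\op{CR}}(\cY)$ then satisfies $\phi \circ i^{\op c}_* = i_*$, where $i_*$ on the right is the pushforward into ordinary cohomology. Since $i^{\op c}_*$ is an isomorphism, $\op{im}(\phi) = \op{im}(\phi \circ i^{\op c}_*) = \op{im}(i_*)$, which is exactly the first claimed equality. The one subtlety is checking that the Thom isomorphism and the retraction behave well with respect to the decomposition into twisted sectors and the age shifts; but the inertia stack $I\cY$ is itself the total space of a (pulled-back) vector bundle over $I\cX$ sector by sector, so the argument applies componentwise, and the age shifts match because $I\cX \hookrightarrow I\cY$ is the zero section in each sector.

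For the second equality, assume $\cE = p^*E$ for $E \to X$ a bundle on the coarse space. The key identity is the standard self-intersection/projection formula for the zero section: for any $\beta \in H^*_{\op{CR}}(\cY)$,
\[
i_* i^* \beta = e(\pi^*\cE^\vee) \cup \beta \in H^*_{\op{CR}}(\cY),
\]
where I use that the normal bundle of the zero section $I\cX \hookrightarrow I\cY$ is (the restriction of) $\pi^*\cE^\vee$. Combined with $\pi^* = (i^*)^{-1}$ being an isomorphism, this shows $\op{im}(i_*) = \op{im}(i_* i^*) = \op{im}(e(\pi^*\cE^\vee) \cup -)$. The hypothesis that $\cE$ is pulled back from the coarse space is what guarantees that $e(\pi^*\cE^\vee)$ lies in the \emph{untwisted} sector $H^*(\cY) \subset H^*_{\op{CR}}(\cY)$, so that by the fact recalled in Remark~\ref{remaCR} the Chen--Ruan cup product with it agrees with ordinary cup product against $q^*e(\pi^*\cE^\vee)$ on each sector — this is what makes the self-intersection formula literally the Euler-class multiplication as stated, rather than something twisted.

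I expect the main obstacle to be bookkeeping rather than conceptual: carefully matching the Thom isomorphism on the orbifold (equivalently, on each twisted sector of $I\cY$, which is the total space of $\pi^*\cE^\vee$ restricted to $\cY_\gamma$) with the age-shifted grading conventions, and verifying that $i^{\op c}_*$ really is an isomorphism onto $H^*_{\op{CR},\op c}(\cY)$ and not merely injective. For this I would invoke the orbifold Thom isomorphism (each sector being a vector-bundle total space over a compact base, so compactly supported cohomology is free of rank one over the base in the appropriate shift), together with the compatibility $\phi \circ i^{\op c}_* = i_*$ which follows from functoriality of the forgetful map under proper pushforward. Everything else — the projection formula, the identification of the normal bundle, the reduction to the untwisted sector via Remark~\ref{remaCR} — is formal once these two points are in place.
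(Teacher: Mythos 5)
Your proof is correct and takes essentially the same route as the paper. For the first equality the paper also reduces everything to the claim that $i^{\op c}_*$ is an isomorphism and that $\phi\circ i^{\op c}_* = i_*$; it deduces the former from homotopy invariance of homology plus Poincar\'e duality rather than invoking the Thom isomorphism by name, but this is cosmetic. For the second equality the paper spells out exactly your self-intersection identity $i_* i^*\beta = e(\pi^*\cE^\vee)\cup\beta$ via the projection formula and the observation $i_*(1) = e(\pi^*\cE^\vee)$, then finishes by noting $i^*$ is an isomorphism, as you do.

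One small point worth sharpening: you say the hypothesis ``$\cE$ pulled back from the coarse space'' is needed so that $e(\pi^*\cE^\vee)$ lives in the untwisted sector and multiplication by it is the ordinary cup product, which is true, but the hypothesis does more work than that. It guarantees that on \emph{every} twisted sector $\cY_\gamma$ the zero section $\cX_\gamma\hookrightarrow\cY_\gamma$ has normal bundle equal to the full $\pi^*\cE^\vee|_{\cY_\gamma}$ of rank $r$, rather than just the $\gamma$-invariant subbundle; without it, $\cY_\gamma$ would be the total space of $(\cE^\vee)^\gamma$ over $\cX_\gamma$, and your formula $i_*i^*\beta = e(\pi^*\cE^\vee)\cup\beta$ would fail on those sectors (the correct Euler class on $\cY_\gamma$ would be that of the smaller bundle). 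The paper makes this explicit (``all isotropy groups of $\cX$ act trivially on the fibers of $\cE$,'' so ``each component $\cY_\gamma$ of $I\cY$ is given by the total space of $\cE^\vee$ restricted to $\cX_\gamma$''), and you should too, since it is the key place where the hypothesis is used. Also note that the bundles $(\cE^\vee)^\gamma$ over the various $\cX_\gamma$ need not assemble into a ``pulled-back'' bundle over $I\cX$ when the hypothesis is absent (relevant only to your first-part phrasing, where the Thom isomorphism applies sector by sector regardless of rank, so no harm done there).
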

\begin{proof}
By the proof of Proposition~\ref{p:pp}, $\phi \circ i_*^{\op{c}} = i_* \circ \phi = i_*$, so \[\op{im}\left(i_*: H^*_{\op{CR}}(\cX) \to H^*_{\op{CR}, \op{nar}}(\cY) \right) \subseteq H^*_{\op{CR}, \op{nar}}(\cY).\]  The two are equal if $i^{\op{c}}_*: H^*_{\op{CR}}(\cX) \to H^*_{\op{CR}, \op{c}}(\cY)$ is an isomorphism.  This holds because the pushforward $i_*: H_*(I\cX; \RR) \to H_*(I\cY; \RR)$ in homology is an isomorphism.

Assume that $\cE$ is  pulled back from a vector bundle $E \to X$ on the coarse space of $\cX$.  In this case all isotropy groups of $\cX$ act trivially on the fibers of $\cE$.  Consequently each component $\cY_\gamma$ of $I\cY$ is given by the total space of $\cE^\vee$ restricted to $\cX_\gamma$.
 The base $\cX_\gamma \subset \cY_\gamma$ can be viewed as the zero locus of the tautological section of $\pi^*\cE^\vee$ restricted to $\cY_\gamma$.  On the untwisted sector, we have
$e(\pi^*\cE^\vee) = i_*(1) \in H^*(\cY)\subset H^*_{\op{CR}}(\cY)$.  Then for all $\alpha \in H^*_{\op{CR}}(\cY)$,
\[e(\pi^*\cE^\vee) \cup \alpha = i_*(1) \cup \alpha = i_*(1 \cup i^*(\alpha)) = i_*(i^*(\alpha))\]
where the second equality follows from the projection formula and Remark~\ref{remaCR} and the last equality is Remark \ref{remaCR}.  The proof concludes by noting that $i^*: H^*_{\op{CR}}(\cY) \to H^*(\cX)$ is an isomorphism.
%
\end{proof}

\section{Quantum $D$-modules for a proper target}\label{s:dmod}
This section serves to recall the basic definitions and constructions of Gromov--Witten theory and to set notation.  We recall how the genus zero Gromov--Witten theory of a smooth and proper Deligne--Mumford stack $\cX$ defines a flat connection known as the Dubrovin connection, which in turn gives a \emph{quantum $D$-module} with integral structure.   See \cite{CK, Iri, Iri3} for more details.

For the remainder of the paper, $\cX$ will be a smooth Deligne--Mumford stack over $\CC$.  We will use the same notation to denote the corresponding complex orbifold.  
We always assume further that the coarse moduli space of $\cX$, denoted by $X$, is projective.
\begin{defi}
Given $\cX$ as above, let $\sMbar_{h,n}(\cX, d)$ denote the moduli space of representable degree $d$ stable maps from orbi-curves of genus $h$ with $n$ marked points (as defined in \cite{AGV}).  Here $d$ is an element of 
 the cone $\op{Eff} = \op{Eff}(\cX) \subset H_2(\cX, \QQ)$ of effective curve classes.   Denote by $[\sMbar_{h,n}(\cX, d)]^{vir}$ the virtual fundamental class of \cite{BF} and \cite{AGV}. 
\end{defi}

Recall that for each marked point $p_i$ there exist evaluation maps 
\[ev_i: \sMbar_{h, n}(\cX, d)  \to \bar I \cX,\] where  $\bar I \cX$ is the \emph{rigidified inertia stack} as in \cite{AGV}.  By the discussion in Section 6.1.3 of \cite{AGV}, it is convenient to work as if the map $ev_i$ factors through $I \cX$.  While this is not in fact true, due to the isomorphism $H^*(\bar I \cX; \CC) \cong H^*(I\cX; \CC)$ it makes no difference in terms of defining Gromov--Witten invariants.  

%

\begin{defi}
Given $\alpha_1, \ldots, \alpha_n \in H^*_{\op{CR}}(\cX)$, $d \in \op{Eff}$ and integers $h, b_1, \ldots, b_n \geq 0$, define the Gromov--Witten invariant
\[ \langle \alpha_1 \psi^{b_1}, \ldots, \alpha_n \psi^{b_n} \rangle^{\cX}_{h, n, d} := 
\int_{[\sMbar_{h,n}(\cX, d)]^{vir}} \prod_{i=1}^n ev_i^*( \alpha_i) \cup \psi_i^{b_i}.\]
\end{defi}

\subsection{Twisted invariants}\label{s:td}
Many of the results of \S~\ref{s:twisting} and \S~\ref{s:qsd} are in terms of \emph{twisted invariants}, which we define below.  See \cite{CG} for details of the theory.

Let $\cE$ be a vector bundle on a  Deligne--Mumford stack $\cX$.  
Given formal parameters $s_k$ for $k \geq 0$, one defines the formal invertible multiplicative characteristic class
\[
\bs:  \cE \mapsto \exp \left(  \sum_{k \geq 0} s_k \ch_k (\cE) \right).\]
The twisted Gromov--Witten invariants depend on the parameters $\bs = (s_0, s_1, \ldots )$ and take values in $\CC[[\bs]]$.  Let $f^*(\cE)$ denote the pullback of $\cE$ to the universal curve $\widetilde{\cc C}$ over $\sMbar_{h,r}(\cX, d)$.

\begin{defi}
Given $\alpha_1, \ldots, \alpha_n$ in $H^*_{\op{CR}}(\cX)$, $d \in \op{Eff}$ and integers $h, b_1, \ldots, b_n \geq 0$, define the \emph{$\bs$-twisted} Gromov--Witten invariant of $\cX$ to be
\begin{align*} &\langle \alpha_1 \psi^{b_1}, \ldots, \alpha_n \psi^{b_n} \rangle^{\cX, \bs}_{h, n, d} \\ := &
\int_{[\sMbar_{h,n}(\cX, d)]^{vir}} \exp \left( \sum_{k \geq 0} s_k \ch_k (\mathbb R \pi_* f^*(\cE)) \right)  \prod_{i=1}^n ev_i^*( \alpha_i) \cup \psi_i^{b_i}.\end{align*}
\end{defi}

\begin{defi}
Define an \emph{$\bs$-twisted Gromov--Witten pairing} as follows.  Given
$\alpha, \beta \in H^*_{\op{CR}}(\cX) \otimes \CC[[\bs]]$, define
\[ \langle \alpha, \beta \rangle^{ \cX, \bs} := \br{\exp \left( \sum_{k \geq 0} s_k \ch_k (\cE) \right) \alpha, \beta }^\cX.\]
\end{defi}

\subsection{Quantum connections}

Choose a basis $\{T_i\}_{i \in I}$ for the $H^*_{\op{CR}}(\cX)$ state space such that $I = I' \coprod I''$ where $I''$ indexes a basis for the degree two part of the cohomology supported on the untwisted sector, and $I'$ indexes a basis for the cohomology of the twisted sectors together with the degree not equal to two cohomology of the untwisted sector.    
Let $\bt ' = \sum_{i \in I '} t^i T_i$ and let $\bt = \sum_{i \in I' \cup I''} t^i T_i$.  Let $q^i = e^{t_i}$ for $i \in I''$.  Choose $\alpha_1, \ldots, \alpha_n$ from $H^*_{\op{CR}}(\cX) $.
 For $\square = \cX$ or $\square = (\cX, \bs)$, 
\begin{equation}\label{e:fps20}\langle \langle \alpha_1 \psi^{b_1}, \ldots, \alpha_n \psi^{b_n} \rangle \rangle^\square(\bt) := \sum_{d \in \op{Eff}}\sum_{k\geq 0}\frac{1}{k!} \langle \alpha_1 \psi^{b_1}, \ldots, \alpha_n \psi^{b_n}, \bt, \ldots, \bt \rangle^\square_{0,n+k,d}\end{equation}
where a summand is implicitly assumed to be zero if $d = 0$ and $n+k < 3$.  
Let $\CC[[\bt ']] := \CC[[ t^i]]_{i \in I ' }$ and $\CC[[\bq]] := \CC[[q^i]]_{i \in I ''}$.
By the divisor equation \cite[Section~10.1]{CK}, if $b_1 = \cdots = b_n = 0$ then \eqref{e:fps20} can be viewed as a formal power series in $\CC[[\bt ', \bq]]$
 or $\CC[[\bt ', \bq, \bs]]$ in the twisted case.  
 
 \begin{nota}\label{n:P}
 Denote by $P^\square$  the power series $\CC[[\bt ', \bq]]$ when $\square = \cX$ refers to Gromov--Witten theory, or $\CC[[\bt ', \bq, \bs]]$ when $\square = (\cX, \bs)$ refers to a $\bs$-twisted theory.
 \end{nota}

\begin{defi}
For elements $\alpha, \beta$ in $H^*_{\op{CR}}(\cX)$ with $\square = \cX$ or $(\cX, \bs)$, define the \emph{quantum product}
$\alpha \bullet_{\bt}^\square \beta \in H^*_{\op{CR}}(\cX) \otimes P^\square$ by the formula
\[ \langle \alpha \bullet_{\bt} \beta, \gamma \rangle^\square = \langle \langle \alpha, \beta, \gamma\rangle \rangle^\square(\bt)\]
for all $\gamma \in H^*_{\op{CR}}(\cX)$.  
\end{defi}
In the non-twisted setting ($\bs = 0$) this is equivalent to
\[\alpha \bullet_{\bt}^\square \beta := \sum_{d \in \op{Eff}} \sum_{k \geq 0} \frac{1}{k!} I_* \circ {ev_3}_* \left( ev_1^*(\alpha) \cup ev_2^*(\beta) \cup \prod_{j=4}^{k+3} ev_j^*(\bt) \cap \left[ \sMbar_{0,k+3}(\cc X, d)\right]^{vir}
\right)\]
 where to simplify notation we suppress the Poincar\'e duality isomorphism $\op{PD}: H^*_{CR}(\cX)\to H_*^{\op{BM}}(I\cX) $ from \eqref{e:PD} which identifies the vector spaces on the left and right side of the expression. 

\begin{defi}\label{CRprod}
For elements $\alpha, \beta$ in $H^*_{\op{CR}}(\cX)$  define the \emph{Chen--Ruan cup product}
$\alpha \cup \beta \in H^*_{\op{CR}}(\cX)$ by the formula
\[ \langle \alpha \cup \beta, \gamma \rangle^\cX = \langle \langle \alpha, \beta, \gamma\rangle \rangle^\cX(\bf{0})|_{\bq = 0}\]
for all $\gamma \in H^*_{\op{CR}}(\cX)$.  

\end{defi}

\subsection{Quantum $D$-module}

Following \cite{Iri3}, define a $z$-sesquilinear pairing $S^\square$ on $H^*_{\op{CR}}(\cX) \otimes P^\square[z, z^{-1}]$ as
\[ S^\square( u(z), v(z)) := (2 \pi \sqrt{-1} z)^{\dim(\cX)} \langle u(-z), v(z) \rangle^\square.\] 

\begin{defi}\label{d:dub1} The Dubrovin connection is given by the formula 
\[ \nabla_i^\square  = \partial_i + \frac{1}{z} T_i \bullet_{\bt}^\square,\]
where recall that $\{T_i\}_{i \in I}$ is a basis for $H^*_{\op{CR}}(\cX)$.  These operators act on  $H^*_{\op{CR}}(\cX) \otimes P^\square[z, z^{-1}]$.
\end{defi}

When we are not in 
the twisted setting, we can define the quantum connection in the $z$-direction as well.\footnote{The definition of $\nabla_z^\square$ is based on the fact that (when we are not in the twisted setting) the virtual class is of pure dimension on connected components.}  Define the Euler vector field
\[ \mathfrak E := \partial_{\rho(\cX)}+ \sum_{i \in I} \left(1 - \frac{1}{2} \deg T_i\right) t^i \partial_i\]
where $\rho(\cX):= c_1(T\cX) \subset H^2(\cX; \CC)$.

Define the grading operator $\op{Gr}$ by 
\[\op{Gr}(\alpha) := \frac{\deg \alpha}{2} \alpha\] for $\alpha$ in $H^*_{\op{CR}}(\cX)$.
Then define
\[ \nabla_{z}^\cX = \partial_z - \frac{1}{z^2} \mathfrak E \bullet^\cX_{\bt} + \frac{1}{z} \op{Gr}.\]



Define the operator $L^\square(\bt, z)$ 
 by
 \[L^\square(\bt, z) \alpha := \alpha + \sum_{i \in I} \br{\br{ \frac{\alpha}{-z - \psi}, T^i }}^\square(\bt) T_i.\]
 When we are not in the twisted setting this is equivalent to
\begin{equation}\label{e:Lgen}
L^{\cX}(\bt, z)(\alpha) := \alpha + \sum_{d \in \op{Eff}} \sum_{k \geq 0} \frac{1}{k!}
I_* \circ{ev_2}_* \left(\frac{ev_1^*(\alpha)}{-z - \psi} \prod_{j=3}^{k+2} ev_j^*(\bt) \cap \left[\sMbar_{0, k+2}(\cX, d)\right]^{vir} \right).
\end{equation}

\begin{prop}\cite{CK, Iri3} \label{p:flat} Let $\square = \cX$ or $(\cX, \bs)$.  
The quantum connection $\nabla^\square$ is flat.
 In the untwisted case,
\begin{equation} \nabla^\cX_i \left(L^\cX(\bt, z) z^{- \op{Gr}}z^{\rho(\cX)} \alpha\right) = \nabla^\cX_z \left(L^\cX(\bt, z) z^{- \op{Gr}} z^{\rho(\cX)}\alpha\right) = 0 \end{equation}  for  $i \in I$ and $\alpha \in H^*_{\op{CR}}(\cX)$.  
In  the twisted case,
\begin{equation} \nabla^{\cX, \bs}_i \left(L^{\cX, \bs}(\bt, z)  \alpha\right) = 0.\end{equation}
In both the twisted and untwisted setting the pairing $S^\square$ is flat with respect to $\nabla^\square$.
For $\alpha, \beta \in H^*_{\op{CR}}(\cX)$,
\begin{equation}\label{e:invadj} \langle L^\square(\bt, -z) \alpha, L^\square(\bt, z) \beta \rangle^\square = \langle \alpha, \beta\rangle^\square.\end{equation}
\end{prop}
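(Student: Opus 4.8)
The plan is to treat Proposition~\ref{p:flat} as a package of standard facts about the Dubrovin connection and its fundamental solution, and to prove each part by reducing to the well-established case over a proper target (\cite{CK, Iri3}), while keeping careful track of the twisted version where the $z$-direction connection is unavailable. First I would establish flatness of $\nabla^\square$. The curvature of $\nabla^\square_i$ in the $t$-directions is governed by the associativity of $\bullet^\square_\bt$ together with the symmetry $\partial_i(T_j \bullet^\square_\bt) = \partial_j(T_i \bullet^\square_\bt)$; both follow from the splitting axiom and the string/divisor equations for genus zero descendent invariants, exactly as in the untwisted case, and the twisting class $\exp(\sum_k s_k \ch_k(\mathbb R\pi_* f^*\cE))$ behaves functorially under gluing so the same argument applies verbatim. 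For the mixed $(i,z)$-curvature in the untwisted case, one uses that the virtual class has pure dimension on each connected component, so that $\mathfrak E$ acts on correlators by a homogeneity (grading) identity; flatness of $\nabla^\cX_z$ with $\nabla^\cX_i$ then reduces to this homogeneity plus associativity.

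Next I would verify that $L^\square(\bt,z) z^{-\op{Gr}} z^{\rho(\cX)}$ (resp.\ $L^{\cX,\bs}(\bt,z)$) is a fundamental solution, i.e.\ that $\nabla^\square_i$ and $\nabla^\cX_z$ annihilate it. The equation $\nabla^\square_i(L^\square \alpha)=0$ unwinds, using the definition $L^\square \alpha = \alpha + \sum_i \br{\br{\alpha/(-z-\psi), T^i}}^\square T_i$, into a topological recursion relation: expanding $1/(-z-\psi) = \sum_{b\ge 0} (-z)^{-b-1}\psi^b$ and applying the genus-zero TRR for the classes $\psi^b$ on $\sMbar_{0,n}$ produces precisely the term $\frac{1}{z} T_i \bullet^\square_\bt L^\square$. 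The $z$-direction equation $\nabla^\cX_z(L^\cX z^{-\op{Gr}}z^{\rho(\cX)}\alpha) = 0$ is then checked by combining the TRR-derived $t$-equations with the homogeneity of correlators (which produces the $\mathfrak E\bullet$ and $\op{Gr}$ terms) and the fact that $z^{\rho(\cX)}$ conjugates the divisor directions appropriately; this is the standard argument of \cite{Iri3}. For the twisted case one simply omits the $z$-direction statement, which is why the proposition separates the two.

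For flatness of the pairing $S^\square$ with respect to $\nabla^\square$, I would reduce it to the adjointness identity \eqref{e:invadj}, which I expect to prove first. The point is that $L^\square(\bt,z)$ and $L^\square(\bt,-z)$ are adjoint with respect to $\br{-,-}^\square$ up to sign: this follows from the genus-zero two-point function symmetry and the $\psi$-class reconstruction, i.e.\ from comparing $\br{\br{\alpha/(-z-\psi), T^i}}^\square$ with $\br{\br{T_i/(z-\psi), \beta}}^\square$ via the dilaton/string/TRR machinery and the nondegeneracy of $\br{-,-}^\square$. Once \eqref{e:invadj} is in hand, $z$-sesquilinearity of $S^\square$ plus the definition $S^\square(u(z),v(z)) = (2\pi\sqrt{-1}z)^{\dim\cX}\br{u(-z),v(z)}^\square$ gives $\nabla^\square$-flatness by a direct computation: differentiating $S^\square(L^\square(-z)a, L^\square(z)b)$ in a $t$-direction, the two $\frac1z T_i\bullet$ contributions cancel because $T_i\bullet^\square_\bt$ is self-adjoint for $\br{-,-}^\square$ (a consequence of the WDVV/splitting axiom), and in the $z$-direction the $(2\pi\sqrt{-1}z)^{\dim\cX}$ factor together with $\op{Gr}$ and $\mathfrak E\bullet$ self-adjointness produces the cancellation.

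The main obstacle I anticipate is \eqref{e:invadj}: the bookkeeping of $\psi$-classes, the sign $(-z)$ versus $(z)$, and the insertion-permutation in the two-point descendent series must be handled carefully, since this is the identity on which flatness of the pairing ultimately rests. Everything else is either a direct transcription of the proper-target arguments in \cite{CK, Iri3} — flatness, the fundamental solution property — or a formal consequence of \eqref{e:invadj} together with self-adjointness of quantum multiplication. I would therefore devote most of the written proof to \eqref{e:invadj}, citing \cite{CK, Iri3} for the remaining parts and indicating that the twisting class changes nothing in the gluing arguments because $\mathbb R\pi_* f^*\cE$ restricts compatibly under the boundary maps of $\sMbar_{0,n}(\cX,d)$.
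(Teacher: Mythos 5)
The paper does not prove Proposition~\ref{p:flat}: it is stated as a citation to \cite{CK, Iri3}, so there is no ``paper's proof'' to compare your sketch against. Your outline is a reasonable reconstruction of the standard arguments (associativity of $\bullet^\square_\bt$ for flatness of the connection, topological recursion for the fundamental-solution property, self-adjointness of quantum multiplication for flatness of the pairing).

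Two small points worth tightening. First, you attribute the self-adjointness of $T_i\bullet^\square_\bt$ with respect to $\br{-,-}^\square$ to ``the WDVV/splitting axiom.'' In fact self-adjointness follows directly from the full symmetry of the three-point function $\br{\br{T_i,\alpha,\beta}}^\square$ in its arguments; WDVV gives associativity of $\bullet^\square_\bt$, which is what underlies flatness of $\nabla^\square$ but not self-adjointness. Second, you frame flatness of $S^\square$ as a reduction to \eqref{e:invadj}, yet the computation you then describe---differentiating $S^\square(L^\square(\bt,-z)\alpha, L^\square(\bt,z)\beta)$ and watching the $\tfrac1z T_i\bullet$ terms cancel via self-adjointness---is exactly a proof that flat sections pair constantly, which together with sesquilinearity over $P^\square[z,z^{-1}]$ is \emph{equivalent} to flatness of $S^\square$. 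Both logical directions are valid, but the more common presentation (e.g.\ in \cite{Iri3}) proves flatness of $S^\square$ directly from self-adjointness (this needs no fundamental solution at all) and then derives \eqref{e:invadj} as a consequence, using the $\bq\to0$ limit as the initial condition; this makes \eqref{e:invadj} less of an ``obstacle'' than your sketch suggests.
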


\begin{defi}\label{d:qdm}
The \emph{quantum $D$-module} for $\cX$,  QDM($\cX$), is defined to be the triple:
\[QDM(\cX) := \left(H^*_{\op{CR}}(\cX) \otimes P^\cX[z, z^{-1}], \nabla^\cX, S^\cX \right).\]
\end{defi}

\subsubsection{Ambient Gromov--Witten theory}\label{ss:amb}
There is a restricted quantum $D$-module for local complete intersection sub-stacks.

\begin{defi}
A vector bundle $\cE \to \cX$ is called \emph{convex} if, for every representable morphism 
\[ f: \cC \to \cX\]
from a genus zero orbi-curve $\cC$, the cohomology $H^1(\cC, f^*(\cE))$ is zero.
\end{defi}
\begin{rema}\label{r:conv}
Note that convexity of $\cE$ implies that it is pulled back from a vector bundle $E \to X$ on the coarse space \cite[Remark~5.3]{CGIJJM}.  Furthermore one can  check that $E$ will itself be convex in this case.
\end{rema}

Let $\cX$ be as before. Consider $\cE$ a convex vector bundle on $\cX$, and let $\cZ$ be the zero locus of a transverse section $s \in \Gamma( \cX, \cE)$.  Let $j: \cZ \to \cX$ denote the inclusion map and define $H^*_{\op{CR}, \op{amb}}(\cZ) := \op{im} (j^*)$.
\begin{assumption}\label{a:ass2}
As in \cite{IMM}, we will always assume that the Poincar\'e pairing on $H^*_{\op{CR}, \op{amb}}(\cZ)$ is non-degenerate.  This is equivalent to the condition that
\begin{equation}\label{e:ass2} H^*_{\op{CR}}(\cZ) = \op{im}(j^*) \oplus \ker(j_*).\end{equation}
\end{assumption}
Assumption~\ref{a:ass2} holds for instance if $\cE$ is the pullback of an ample line bundle on $X$ and $\cZ$ intersects each twisted component of $\cX$ transversally.
\begin{prop}\cite[Corollary 2.5]{Iri3}
For $\bar \bt \in H^*_{\op{CR}, \op{amb}}(\cZ)$, the quantum product $\bullet^\cZ_{\bar \bt}$ is closed on $H^*_{\op{CR}, \op{amb}}(\cZ)$.  The quantum connection and solution $L^\cZ(\bar \bt, z)$ preserve $H^*_{\op{CR}, \op{amb}}(\cZ)$ for $\bar \bt \in H^*_{\op{CR}, \op{amb}}(\cZ)$.
\end{prop}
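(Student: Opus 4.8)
The plan is to reduce all three assertions to a single vanishing statement for Gromov–Witten invariants of $\cZ$ with ambient insertions, and to obtain that vanishing from the functoriality of genus‑zero Gromov–Witten theory under $j\colon\cZ\hookrightarrow\cX$. First, some bookkeeping from Assumption~\ref{a:ass2}. For $a\in H^*_{\op{CR}}(\cX)$ and $\mu\in H^*_{\op{CR}}(\cZ)$ the projection formula gives $\langle j^*a,\mu\rangle^\cZ=\langle a,j_*\mu\rangle^\cX$, so (the Chen–Ruan pairing on the compact stack $\cZ$ being nondegenerate) $H^*_{\op{CR},\op{amb}}(\cZ)^\perp=\ker(j_*)$, and dualizing, $\ker(j_*)^\perp=H^*_{\op{CR},\op{amb}}(\cZ)$. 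Hence a class in $H^*_{\op{CR}}(\cZ)$ is ambient precisely when it pairs to zero with every element of $\ker(j_*)$; Assumption~\ref{a:ass2} additionally ensures the pairing restricts nondegenerately to $H^*_{\op{CR},\op{amb}}(\cZ)$, which is what makes $QDM_{\op{amb}}(\cZ)$ an honest quantum $D$‑module.

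The geometric core is the vanishing
\[
\big\langle j^*a_1\,\psi^{b_1},\dots,j^*a_{n-1}\,\psi^{b_{n-1}},\;\mu\,\psi^{b_n}\big\rangle^\cZ_{0,n,d'}=0,\qquad \mu\in\ker(j_*),
\]
for all $d'\in\op{Eff}$, all $a_i\in H^*_{\op{CR}}(\cX)$, and all $\psi$‑powers. Convexity of $\cE$ makes the pushforward $\cE_{0,n,d}$ of $f^*\cE$ from the universal curve to $\sMbar_{0,n}(\cX,d)$ a vector bundle; $\sMbar_{0,n}(\cZ,d')$ is the zero locus of the induced section, and the natural morphism $\iota\colon\sMbar_{0,n}(\cZ,d')\to\sMbar_{0,n}(\cX,d)$ (with $d=j_*d'$) intertwines the evaluation maps, fixes $\psi$‑classes, and satisfies $\iota_*[\sMbar_{0,n}(\cZ,d')]^{vir}=e(\cE_{0,n,d})\cap[\sMbar_{0,n}(\cX,d)]^{vir}$. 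Factoring $\iota$ through the fiber product $\sMbar_{0,n}(\cX,d)\times_{I\cX}I\cZ$ — so that the possibly non‑ambient class $\mu$ becomes pulled back along the second projection — and then applying the projection formula together with the excess‑intersection formula for the inclusion $I\cZ\hookrightarrow I\cX$, one rewrites the invariant as a twisted invariant of $\cX$ with the class $j_*\mu$ inserted at the corresponding marked point; since $j_*\mu=0$ it vanishes. This is the functoriality underlying quantum Lefschetz; in the form required it is precisely \cite[Corollary~2.5]{Iri3}. I expect this transfer — reconciling virtual classes, $\psi$‑classes, and the Euler‑class bookkeeping across the two moduli stacks in the orbifold setting — to be the one genuinely technical step; one would cite it or reproduce that argument.

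Granting the vanishing, the remaining claims are formal. For ambient $\alpha,\beta$, ambient $\bar\bt$, and any $\mu\in\ker(j_*)$,
\[
\langle\alpha\bullet^\cZ_{\bar\bt}\beta,\mu\rangle^\cZ=\sum_{d'\in\op{Eff}}\sum_{k\ge 0}\frac{1}{k!}\langle\alpha,\beta,\mu,\bar\bt,\dots,\bar\bt\rangle^\cZ_{0,3+k,d'}=0
\]
(the $d'=0$, $k=0$ term equals $\langle\alpha\cup\beta,\mu\rangle^\cZ=0$, since $H^*_{\op{CR},\op{amb}}(\cZ)$ is a subring for the Chen–Ruan cup product), so $\alpha\bullet^\cZ_{\bar\bt}\beta\in\ker(j_*)^\perp=H^*_{\op{CR},\op{amb}}(\cZ)$: the quantum product is closed on the ambient subspace. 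Choosing the basis $\{T_i\}$ adapted to $H^*_{\op{CR}}(\cZ)=H^*_{\op{CR},\op{amb}}(\cZ)\oplus\ker(j_*)$ (so that $\{T^i\}$ is adapted too), $\partial_i$ and, for ambient $T_i$, the operator $\tfrac{1}{z}\,T_i\bullet^\cZ_{\bar\bt}$ preserve $H^*_{\op{CR},\op{amb}}(\cZ)\otimes P^\cZ[z,z^{-1}]$; hence so does $\nabla^\cZ_i$ along the ambient directions, and likewise $\nabla^\cZ_z$, since $\rho(\cZ)=c_1(T\cZ)=j^*\big(c_1(T\cX)-c_1(\cE)\big)$ is ambient (so $\mathfrak E\bullet^\cZ_{\bar\bt}$ is quantum multiplication by an ambient class) and $\op{Gr}$ preserves the degree grading, which $j^*$ respects. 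Finally, in $L^\cZ(\bar\bt,z)\alpha=\alpha+\sum_i\langle\langle\alpha/(-z-\psi),T^i\rangle\rangle^\cZ(\bar\bt)\,T_i$ with $\alpha,\bar\bt$ ambient, the coefficient of any $T_i\in\ker(j_*)$ is — after expanding $(-z-\psi)^{-1}$ in powers of $\psi$ — a sum of invariants of the displayed type with $T^i\in\ker(j_*)$ inserted, hence vanishes; so $L^\cZ(\bar\bt,z)$ preserves $H^*_{\op{CR},\op{amb}}(\cZ)$ as well.
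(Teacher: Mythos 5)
Your proof is correct and follows exactly the route the paper relies on: the paper gives no proof of its own but cites Iritani's Corollary 2.5 and notes, in the Remark immediately following, that the argument there is Pandharipande's, which is precisely what you reconstruct — the orthogonality $H^*_{\op{CR},\op{amb}}(\cZ)=\ker(j_*)^\perp$ from Assumption~\ref{a:ass2}, the vanishing of $\cZ$-invariants carrying one $\ker(j_*)$ insertion obtained by pushing the virtual class through $\sMbar_{0,n}(\cX,d)\times_{I\cX}I\cZ$, and the formal deduction of closure of $\bullet^\cZ_{\bar\bt}$, $\nabla^\cZ$, and $L^\cZ(\bar\bt,z)$ in a basis adapted to the orthogonal decomposition. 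The one technical step you rightly flag for citation (the virtual-class comparison, the refined projection/excess-intersection bookkeeping producing $ev_n^*(j_*\mu)$, and the $\psi$-class compatibility in the orbifold setting) is the quantum Lefschetz input, and Iritani's Corollary 2.5 together with Coates's Remark 2.2 for the weakened hypotheses used here is the correct reference; the only small imprecision is that this argument most naturally yields the vanishing summed over $d'$ with $j_*d'$ fixed rather than for each $d'$ separately, which is all your application needs.
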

\begin{rema}
The proof of the above proposition in \cite[Corollary 2.5]{Iri3} follows an argument from \cite{Pan}.  The assumptions on $\cX$ and $\cE$ above are weaker than in either, but the same argument goes through, as observed in  \cite[Remark 2.2]{Coa}.
\end{rema}
%
\begin{defi}
The \emph{ambient} quantum $D$-module is defined to be 
\[ QDM_{\op{amb}}(\cZ) := (H^*_{\op{CR}, \op{amb}}(\cZ) \otimes P^{\cZ, \op{amb}}[z, z^{-1}], \nabla^\cZ, S^\cZ)\]
where $P^{\cZ, \op{amb}}$ denotes the restriction of $P^\cZ$ to $H^*_{\op{CR}, \op{amb}}(\cZ)$.
\end{defi}

\subsection{Integral structure}\label{ss:is}

In \cite{Iri}, Iritani defines an integral structure for Gromov--Witten theory. We recall the ingredients here.

For $\cc F$ a vector bundle on $\cX$, let $\cc F_\gamma := \cc F|_{\cX_\gamma}$ denote the restriction of $\cc F$ to a twisted sector $\cX_\gamma$.
Recall as in \S~\ref{ss:orb}, that  $\cc F_\gamma$ splits into a sum of eigenbundles
\[\cc F_\gamma  = \bigoplus_{0 \leq f <1} \cc F_{\gamma, f}\] 
where the action of $\gamma$ on $\cc F_{\gamma, f}$ is multiplication by $e^{2 \pi \sqrt{-1} f}$.

\begin{defi}\label{d:Gamma}
Define the \emph{Gamma class} $\hat \Gamma (\cc F)$ to be the class in $H^*(I\cX)$:
\[ \hat \Gamma (\cc F)  := \bigoplus_{\gamma \in T} \prod_{0 \leq f < 1} \prod_{i = 1}^{\op{rk}(\cc F_{b,f})} \Gamma(1 - f + \rho_{b,f,i})\]
where $\Gamma(1 + x)$ should be understood in terms of its Taylor expansion at $x = 0$,  and $\{\rho_{b,f,i}\}$ are the Chern roots of $\cc F_{b,f}$.  Define $\hat \Gamma_\cX$ to be the class $\hat \Gamma(T\cX)$.
\end{defi}

\subsubsection{Flat sections}\label{ss:fs}
\begin{defi}
Define the operator $\deg_0$ to be the degree operator which multiplies a homogeneous class by its unshifted degree.  In Gromov--Witten theory it multiplies a class in $H^n(I\cX)$ (with the standard grading) by $n$.
\end{defi}

Denote by $D(\cX) := D^b(\cX)$ the bounded derived category of coherent sheaves on $\cX$.  We will omit the superscript $b$.
Given an object $F$ in $D(\cX)$,  define $\bs^\cX(F)(\bt, z)$ to be
\[ \frac{1}{(2 \pi \sqrt{-1})^{\dim(\cX)}} L^\cX(\bt, z) z^{-Gr} z^{\rho(\cX)} \hat \Gamma_{\cX} \cup_{I\cX}\left( (2 \pi \sqrt{-1})^{\deg_0/2} I^*(\ch(F))\right),\]
where $\cup_{I\cX}$ denotes the ordinary cup product on $H^*(I\cX)$.
\begin{prop}\cite{Iri3}\label{p:pairing}
The map $\bs^\cX$ identifies the pairing in the derived category with $S^\cX$:
\[ S^\cX (\bs^\cX(F)(\bt, z), \bs^\cX(F ')(\bt, z)) = 
e^{ \pi \sqrt{-1} \dim(\cX)}
\chi(F ', F).\]
\end{prop}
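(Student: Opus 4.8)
The plan is to reduce the identity $S^\cX(\bs^\cX(F), \bs^\cX(F')) = e^{\pi\sqrt{-1}\dim(\cX)}\chi(F',F)$ to the $\bt = 0$, $z$-independent case, and there to a Hirzebruch--Riemann--Roch computation on the inertia stack $I\cX$, using the orbifold HRR formula and the combinatorial identity for the Gamma class. First I would use the flatness of $S^\cX$ with respect to $\nabla^\cX$ (Proposition~\ref{p:flat}) together with the fact that $\bs^\cX(F)$ is built from a flat section $L^\cX(\bt,z)z^{-\op{Gr}}z^{\rho(\cX)}(\cdots)$. Since both arguments of $S^\cX$ are flat, the pairing $S^\cX(\bs^\cX(F),\bs^\cX(F'))$ is a constant in $\CC[z,z^{-1}]$ (independent of $\bt$); a degree/homogeneity count using the grading operator $\op{Gr}$, the Euler field $\mathfrak E$, and the $z$-connection $\nabla_z^\cX$ shows it is in fact independent of $z$ as well. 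Concretely, one invokes the identity \eqref{e:invadj}, $\langle L^\square(\bt,-z)\alpha, L^\square(\bt,z)\beta\rangle^\square = \langle\alpha,\beta\rangle^\square$, to strip off the $L^\cX$ factors, so that
\[
S^\cX(\bs^\cX(F),\bs^\cX(F')) = (2\pi\sqrt{-1}z)^{\dim\cX}\tfrac{1}{(2\pi\sqrt{-1})^{2\dim\cX}}\big\langle z^{\op{Gr}}z^{-\rho}A(F),\, z^{-\op{Gr}}z^{\rho}A(F')\big\rangle^\cX,
\]
where $A(F) := \hat\Gamma_\cX\cup_{I\cX}\big((2\pi\sqrt{-1})^{\deg_0/2}I^*(\ch(F))\big)$, after absorbing the sign from $-z$ in the first slot. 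The $z^{\pm\op{Gr}}$, $z^{\pm\rho}$ factors then cancel inside the Poincaré pairing (they are mutually adjoint up to the grading that the pairing already accounts for), leaving a $z$-independent expression.

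Next I would unwind the Chen--Ruan pairing $\langle\alpha,\beta\rangle^\cX = \int_{I\cX}\alpha\cup I_*(\beta)$ and reorganize. The key point is that the involution $I: I\cX\to I\cX$ intertwines the twisted sector $\cX_\gamma$ with $\cX_{\gamma^{-1}}$, and under $I^*$ the Gamma class $\hat\Gamma(T\cX)|_{\cX_\gamma}$, the factor $(2\pi\sqrt{-1})^{\deg_0/2}$, and the inverted-sector Chern character combine — via the reflection formula $\Gamma(1-x)\Gamma(1+x) = \pi x/\sin(\pi x)$ applied to each Chern root — into the orbifold Todd class appearing in the Toen/Kawasaki Riemann--Roch formula. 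This is precisely the computation carried out in \cite{Iri, Iri3}; I would cite those and reproduce the bookkeeping: after applying the reflection formula to the age-shifted Chern roots on each $\cX_\gamma$, the product $\hat\Gamma(T\cX_\gamma)\,I^*\hat\Gamma(T\cX_{\gamma^{-1}})$ times the appropriate powers of $2\pi\sqrt{-1}$ and the eigenvalue factors $e^{2\pi\sqrt{-1}f}$ yields $\widetilde{\op{td}}(T\cX)$, the inertia-stack Todd class, while $I^*\ch(F)\cdot\ch(F')$ assembles (after $I^*$ on one factor) into $\ch(F^\vee\otimes F')$ pulled to $I\cX$ with the right eigenvalue twists. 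The integral over $I\cX$ of this product is then $\chi(F',F)$ by orbifold HRR, and the leftover scalar powers of $2\pi\sqrt{-1}$ and $\sqrt{-1}$ collect into $e^{\pi\sqrt{-1}\dim\cX}$.

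Finally I would record the sign/normalization audit as its own short step: tracking the $(2\pi\sqrt{-1})^{\dim\cX}$ from $S^\cX$, the $(2\pi\sqrt{-1})^{-\dim\cX}$ from each copy of $\bs^\cX$, the $(2\pi\sqrt{-1})^{\deg_0/2}$ inside $A(-)$, and the $i^{\dim}$-type factor produced by the reflection formula, and checking they multiply to exactly $e^{\pi\sqrt{-1}\dim\cX}$ with no residual $z$. I expect the main obstacle to be exactly this bookkeeping of powers of $2\pi\sqrt{-1}$ together with the correct handling of the age shift under $I^*$ on the twisted sectors — making sure that "$(2\pi\sqrt{-1})^{\deg_0/2}$" is applied to the unshifted degree and that the reflection-formula factors $\pi\rho/\sin(\pi\rho)$ carry the age-dependent exponential $e^{-\pi\sqrt{-1}\iota_\gamma}$ that ultimately produces the Chen--Ruan (rather than ordinary) Todd class. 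The geometric input — orbifold Hirzebruch--Riemann--Roch — is standard (Toen, Kawasaki), so the proof is essentially a careful transcription of the argument of \cite{Iri3}, and I would present it as such, emphasizing only the points where our conventions on $H^*_{\op{CR}}$ and the pairing $\langle-,-\rangle^\cX$ differ.
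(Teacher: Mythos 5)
The paper gives no proof of this statement; it is cited directly from \cite{Iri3}, so there is no internal argument to compare against. Your reconstruction correctly identifies the route taken in the cited reference: use flatness of $S^\cX$ (together with the $z$-direction connection $\nabla_z^\cX$) to reduce to a $\bt$- and $z$-independent constant, strip the $L^\cX$ factors via \eqref{e:invadj}, and then reduce the resulting Chen--Ruan pairing of Gamma-twisted Chern characters to orbifold Hirzebruch--Riemann--Roch through the reflection formula $\Gamma(1-x)\Gamma(1+x)=\pi x/\sin(\pi x)$ applied sector-by-sector on $I\cX$.

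One point worth tightening: the claim that ``the $z^{\pm\op{Gr}}$, $z^{\pm\rho}$ factors then cancel inside the Poincar\'e pairing'' is a bit too quick. The $z^{-\op{Gr}}$ factors in the two slots combine (using that the Chen--Ruan pairing couples CR degrees summing to $2\dim\cX$) to give $z^{-\dim\cX}$ together with a leftover $(-1)^{-\op{Gr}}$ from the $(-z)$ in the first slot, and the $(-z)^{\rho}$ and $z^{\rho}$ factors, after being moved across the pairing, do \emph{not} cancel but produce a phase $e^{\pm\pi\sqrt{-1}\rho}$. These leftover scalar/phase factors, together with the $e^{\pm\pi\sqrt{-1}\,\cdot}$ terms coming from $\Gamma(1-x)\Gamma(1+x)$ written as $(2\pi\sqrt{-1})x\,e^{-\pi\sqrt{-1}x}/(1-e^{-2\pi\sqrt{-1}x})$, are precisely what assembles the Todd class and the overall constant $e^{\pi\sqrt{-1}\dim\cX}$. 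You do flag the phase bookkeeping as the main obstacle, so this is a matter of precision rather than a gap; but in writing it up you should not describe those factors as cancelling --- they are load-bearing.
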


\begin{assumption}\label{as2}
Assume that $H^*_{\op{CR}}(\cX)$ is spanned by the image of \[\ch: D(\cX) \to H^*_{\op{CR}}(\cX).\]
\end{assumption}
The set 
\[ \{ \bs^\cX(F)(\bt, z) | F \in D(\cX)\}\] forms a lattice in $\op{ker}( \nabla^\cX)$.  This is the integral structure of the quantum $D$-module $QDM(\cX)$.

Let $j: \cZ \to \cX$ be a smooth subvariety defined as in Section~\ref{ss:amb} (i.e. by a convex vector bundle).  Then again with assumption~\ref{as2} we can define an integral structure for the ambient quantum $D$-module as \[ \{ \bs^\cZ(F)(\bt, z) | F \in j^*(D(\cX))\}.\]
Note that by orbifold Grothendieck--Riemann--Roch, $\ch(F)$ will lie in $H^*_{\op{CR}, \op{amb}}(\cZ)$ for $F \in j^*(D(\cX))$.

%

\section{Quantum $D$-modules for a non-proper target}
In fact most of the constructions of genus zero Gromov--Witten theory go through in the case of a non-proper target $\cY$.  We explore this in this section.  The majority of this section is known to the experts, see e.g. \cite{GP, IMM}.  The perspective of \S~\ref{s:nar} in terms narrow cohomology, however, is new.

Let $\cY$ denote a smooth Deligne--Mumford stack with quasi-projective coarse moduli space $Y$.
Although $\cY$ may not be proper, if  the evaluation maps 
\[ev_i: \sMbar_{h, n}(\cY, d)  \to \bar I \cY\] are proper, 
one can still define Gromov--Witten invariants in many cases.  In particular, there is still a well-defined virtual class $[\sMbar_{h,n}(\cY, d)]^{vir}$ over which one can integrate classes of compact support.
\begin{defi}
Assume the maps $ev_i: \sMbar_{h, n}(\cY, d)  \to \bar I \cY$ are proper for $1 \leq i \leq n$.  Given $\alpha_1 \in H^*_{\op{CR}, \op{c}}(\cY)$, $\alpha_2, \ldots, \alpha_n \in H^*_{\op{CR}}( \cY)$ and integers $b_1, \ldots, b_n \geq 0$ define the Gromov--Witten invariant
\[ \langle \alpha_1 \psi^{b_1}, \ldots, \alpha_n \psi^{b_n} \rangle^{\cY}_{h, n, d} := 
\int_{[\sMbar_{h,n}(\cY, d)]^{vir}} \prod_{i=1}^n ev_i^*( \alpha_i) \cup \psi_i^{b_i}.\]
Given $\alpha_1, \alpha_2 \in H^*_{\op{CR}, \op{nar}}(\cY)$, $\alpha_3, \ldots, \alpha_n \in H^*_{\op{CR}}( \cY)$ and integers $b_1, \ldots, b_n \geq 0$ define the Gromov--Witten invariant
\[ \langle \alpha_1 \psi^{b_1}, \ldots, \alpha_n \psi^{b_n} \rangle^{\cY}_{h, n, d} := 
\int_{[\sMbar_{h,n}(\cY, d)]^{vir}} \alpha_1 \cup_{\op{c}} \alpha_2 \cup \prod_{i=3}^n ev_i^*( \alpha_i) \cup \psi_i^{b_i}\]
where recall the product $\cup_{\op{c}}: H^*_{\op{CR}, \op{nar}}(\cY) \times H^*_{\op{CR}, \op{nar}}(\cY) \to H^*_{\op{CR}, \op{c}}(\cY)$ was defined via a lift $\tilde \alpha_1 \in H^*_{\op{CR}, \op{c}}(\cY)$ as in Definition~\ref{d:cupcom}.
\end{defi}


The Lemma below gives an important scenario when the evaluation maps are in fact proper.
Let $\cX$ be  a smooth proper Deligne--Mumford stack, and  $\cE \to \cX$ a vector bundle on $\cX$.  Let $\cY$ denote the total space of $\cE^\vee$ over $\cX$.  
\begin{lemma}\label{l:propmap}
For $1 \leq i \leq n$, the evaluation map $ev_i: \sMbar_{0, n}(\cY, d)  \to \bar I \cY$ is proper in the following situations:
%
\begin{enumerate}
\item The degree $d = 0$; 
\item The vector bundle $\cE$ is convex. 

\end{enumerate}
\end{lemma}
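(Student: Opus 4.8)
The plan is to verify properness of each evaluation map $ev_i$ separately in the two stated cases, reducing in both cases to properness of a map between moduli spaces (or of a compactification argument) over a proper base, and then invoking the valuative criterion or the standard structure theory of these moduli stacks.

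\emph{Case (1): $d=0$.} When $d=0$ a genus-zero stable map to $\cY$ must have image contained in a single fiber of $\pi\colon\cY\to\cX$ — more precisely, since curve classes in $\cY$ push forward to curve classes in $\cX$ and $d=0$, the composition with $\pi$ is a constant map, so the image of the orbi-curve lies in a fiber $\cE^\vee_x$, which is a vector space (hence affine space $\mathbb{A}^r$, or its quotient by the isotropy of $x$). A stable genus-zero orbi-curve mapping to affine space with $\geq 3$ special points must be constant on each component by stability together with the fact that $\mathbb{A}^r$ admits no nonconstant maps from $\mathbb{P}^1$; so in fact every $d=0$ stable map to $\cY$ is constant. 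This identifies $\sMbar_{0,n}(\cY,0)$ with the fibered product of $\ol I\cY$ with $\sMbar_{0,n}$ (the moduli of genus-zero $n$-pointed stable curves, suitably decorated with orbifold structure), in such a way that $ev_i$ factors as projection to $\ol I\cY$ followed by... — no, more carefully, $ev_i$ is identified with the composite $\sMbar_{0,n}(\cY,0)\cong \ol I\cY\times_{?}\sMbar_{0,n}^{\op{orb}}\to \ol I\cY$ which is proper because $\sMbar_{0,n}^{\op{orb}}$ (the relevant moduli of pointed genus-zero orbi-stable-curves) is proper. The cleanest formulation: $ev_i$ is proper because it is (identified with) a map whose fibers and target are proper, concretely because $\sMbar_{0,n}(\cY,0)$ is itself a proper stack in this case — the total space of the obstruction-free moduli of constant maps, fibered in proper moduli of curves over the proper stack $\ol I\cY$.

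\emph{Case (2): $\cE$ convex.} This is the substantive case and I expect it to be the main obstacle. Here one uses the diagram \eqref{e1} and the observation that a stable map $f\colon\cC\to\cY$ of degree $d$ is the same data as a stable map $\ol f = \pi\circ f\colon\cC\to\cX$ of degree $d$ together with a section of $\ol f^*(\cE^\vee)$ over $\cC$, i.e.\ an element of $H^0(\cC,\ol f^*(\cE^\vee))$. Convexity of $\cE$ gives $H^1(\cC,\ol f^*(\cE))=0$ for every genus-zero orbi-curve, which (after dualizing/Serre duality considerations, or directly) is exactly the vanishing that makes $\pi_*$ on the universal curve behave like a vector bundle — so the natural forgetful morphism $\sMbar_{0,n}(\cY,d)\to\sMbar_{0,n}(\cX,d)$ is represented by the total space of a vector bundle (the bundle with fiber $H^0(\cC,\ol f^*(\cE^\vee))$ over $[\ol f]$). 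But a vector bundle's total space is \emph{not} proper, so $ev_i$ on $\cY$ is not proper merely because $\sMbar_{0,n}(\cX,d)$ is proper — one must instead check properness of $ev_i$ directly via the valuative criterion: given a DVR $R$ with fraction field $K$, a family of stable maps to $\cY$ over $\spec K$ whose evaluation $ev_i$ extends over $\spec R$ (landing in a specified point of $\ol I\cY$, hence in a specified fiber-germ of $\cE^\vee$ over $\cX$), one must extend the whole family over $\spec R$. The key point is that the $i$-th marked point maps into a \emph{bounded} region of the fiber direction, and then convexity forces the section over the whole curve to stay bounded as well — a nonconstant "escape to infinity" in the fiber direction of $\cE^\vee$ at a marked point would be needed to break properness, but pinning down $ev_i$ controls the section at $p_i$, and the $H^1$-vanishing rigidifies the section globally so no bubbling-off into the non-compact fiber direction can occur at $p_i$ without already being visible at $p_i$. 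One combines this with the existing properness of $\sMbar_{0,n}(\cX,d)\to\spec\CC$ (which handles the base-curve degeneration) to conclude. This argument is essentially the one in \cite{CG, GP, IMM}, so I would cite those for the detailed valuative-criterion check and present only the structural reduction.

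\emph{Main obstacle.} The genuine difficulty is entirely in case (2): one must argue that although $\sMbar_{0,n}(\cY,d)$ is non-proper, the \emph{evaluation maps} are proper, which requires showing that no sequence of stable maps can have the $i$-th point stay fixed in $\ol I\cY$ while the maps themselves run off to infinity — equivalently, that bubbling phenomena in the fiber ($\mathbb{A}^r$-)direction are always "seen" at the relevant marked point. The clean way to organize this is via the valuative criterion using the interpretation of a map to $\cY$ as (map to $\cX$, section of $\ol f^*\cE^\vee$), reducing to: sections of a convex bundle's dual over a family of genus-zero curves, with one point's value constrained, extend over the DVR. I would state this reduction, note that the $H^1$-vanishing from convexity is exactly what controls the section, and refer to the literature for the remaining diagram-chase.
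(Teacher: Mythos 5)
Your case (1) has the right germ of an idea but is marred by two false assertions. You claim ``$\sMbar_{0,n}(\cY,0)$ is itself a proper stack'' and that $\ol I\cY$ is a ``proper stack.'' Neither is true: $\cY$ is the non-compact total space of a vector bundle, so $\ol I\cY$ is non-compact, and since $\sMbar_{0,n}(\cY,0)$ surjects onto $\ol I\cY$ it cannot be proper either. The correct statement --- which you do gesture at in the sentence ``which is proper because $\sMbar_{0,n}^{\op{orb}}$ is proper'' --- is that the forgetful map to $\ol I\cY$ is \emph{proper as a morphism}, because its fibers (or rather, the analogous map after base change) come from proper moduli of pointed orbi-curves. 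The paper organizes this by factoring $ev_i$ as $\pi_1 \colon \sMbar_{0,n}(\cY,0)\to \bar I^n\cY$ (a proper morphism forgetting the curve) followed by the closed immersion $\bar I^n\cY\to\bar I\cY$; you should adopt this or at least retract the erroneous ``cleanest formulation.''

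For case (2) the reduction you set up (map to $\cY$ $=$ map to $\cX$ plus section of $f^*\cE^\vee$) is correct, and you correctly notice that representing $\sMbar_{0,n}(\cY,d)\to\sMbar_{0,n}(\cX,d)$ as a vector-bundle total space is not enough. But the mechanism you offer --- ``the $H^1$-vanishing rigidifies the section globally'' --- is the wrong ingredient, and you never turn your valuative-criterion sketch into a proof. The vanishing of $H^1(\cC,f^*\cE)$ controls the \emph{obstruction} theory and the bundle structure, but what actually makes $ev_i$ proper is the \emph{injectivity} of the point-evaluation
\[
H^0(\cC,f^*(\cE^\vee)) \hookrightarrow \cE^\vee|_{f(x_i)},
\]
which holds because $\cE$, being convex, is pulled back from a convex $E\to X$, and on each component of a genus-$0$ coarse curve $E\cong\bigoplus\cc O(k_l)$ with $k_l\ge 0$, so $E^\vee$ has no nonzero global section vanishing at a point. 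This is precisely the content of Lemma~\ref{l:hnot}. Once you have this injectivity, you do not need a direct valuative-criterion argument at all: it exhibits $\sMbar_{0,n}(\cY,d)$ as a closed substack of $\sMbar_{0,n}(\cX,d)\,{}_{ev_i}\!\times_{proj}\tot(\bar i^* E^\vee)$, and $ev_i$ becomes a composition of proper maps (using the comparison with the coarse-space version handled in \cite{IMM}). Your proposal instead defers the entire substance of the argument to the literature, which leaves the proof incomplete; and the substitution of $H^1$-vanishing for the section-injectivity is a genuine conceptual misstep, not merely an omitted detail.
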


\begin{proof}
In the first case, the evaluation map $ev_i: \sMbar_{0, n}(\cY, 0) \to \bar I \cY$ factors through a rigidification of $I^n \cY = \coprod_{\bf{g}} \cY_{g_1, \ldots, g_n}$, the stack of $n$-multisectors \cite[Example~2.5]{ALR}.  The map $\pi_1: \sMbar_{0, n}(\cY, 0) \to \bar I^n \cY$ forgets the source curve while remembering the maps $B \mu_{r_i} \to \cY$ at each marked point $p_i$.  The map $\overline{ev_i}: \bar I^n \cY \to \bar I \cY$ sends the component 
$\cY_{g_1, \ldots, g_n}$ to $\cY_{g_i}$.  The evaluation map $ev_i$ is the composition of the proper map $\pi_1$ and the closed immersion $\overline{ev_i}$ and is therefore proper.

In the second case, we note that a map $f_\cY: \cC \to \cY$ consists of a map to $f: \cC \to \cX$ together with a section $s \in H^0( \cC, f^*(\cE^\vee))$.  
Let $r_\cC: \cC \to C$ be the map to the underlying coarse curve.  Let $\bar i$ be the map $\bar I \cX \to X$.
By Theorem 1.4.1 of \cite{AV}, the composition $\cC \to \cX \to X$ factors through a map $|f|: C \to X$.  
By Remark~\ref{r:conv}, $\cE$ is the pullback of a  vector bundle $E \to X$ and $E$ is convex.
Therefore on each irreducible component $C_j$ of $C$, $E \cong \sum_{i=1}^r \cc O(k_i)$ with each $k_i \geq 0$.  
Therefore the evaluation map 
\[ev_i: H^0(\cC, f^*(\cE^\vee)) = H^0(C, |f|^*(E^\vee)) \to E^\vee|_{|f|(r_\cC(x_i))} = \bar i^* E^\vee|_{f(x_i)}\]
is an injection (see Lemma 3.8 of \cite{IMM}). 
Then
$\sMbar_{0, n}(\cY, d)$ is seen to be a substack of $\sMbar_{0, n}(\cX, d) {}_{ev_i} \times_{proj} \tot( \bar i^* E^\vee)$ via the map $ev_i$.  Here $proj$ is the projection from $\tot(\bar i^* E^\vee)$ to $\bar I\cX$.
%
 In fact we have the following fiber square
\[
\begin{tikzcd}
\sMbar_{0, n}(\cY, d) \ar[d] \ar[r, "\op{inc}"] & \sMbar_{0, n}(\cX, d) {}_{ev_i} \times_{proj} \tot( \bar i^* E^\vee) \ar[d] \\
\sMbar_{0, n}(Y, d) \ar[r] & \sMbar_{0, n}(X, d) {}_{ev_i} \times_{proj} \tot(E^\vee).
\end{tikzcd}
\]
The bottom map is proper by Lemma~3.8 of \cite{IMM}, and therefore so is the top map.  The map
\[ \sMbar_{0, n}(\cX, d) {}_{ev_i} \times_{proj} \tot (\bar i^* E^\vee) \xrightarrow{(ev_i, \op{id}_2)} \bar I\cX 
{}_{\op{id}} \times_{proj} \tot( \bar i^* E^\vee) = \bar I \cY\]
is proper.  The evaluation map
$ev_i: \sMbar_{0, n}(\cY, d) \to \bar I \cY$ is equal to the composition $(ev_i, \op{id}_2) \circ \op{inc}$.

\end{proof}

\begin{rema}
More generally, for a noncompact space $\cY$, the evaluation map $ev_i: \sMbar_{g, n}(\cY, d) \to \bar I \cY$ will be proper if $\cY$ is projective over an affine variety \cite{RoSho}.  However we will not use this in what follows so we omit the proof.
\end{rema}

\subsection{Quantum connections}
Let us assume from now on that $\cY$ is not necessarily proper, but  that the (genus zero) evaluation maps are proper.  

As in  \S~\ref{s:dmod} we may define double brackets.
The setup is as before.
Choose a basis $\{T_i\}_{i \in I}$ for the $H^*_{\op{CR}}(\cY)$ state space such that $I = I' \coprod I''$ where $I''$ indexes a basis for the degree two part of the cohomology supported on the untwisted sector, and $I'$ indexes a basis for the cohomology of the twisted sectors together with the degree not equal to two cohomology of the untwisted sector.   
Let $\bt ' = \sum_{i \in I '} t^i T_i$ and let $\bt = \sum_{i \in I' \cup I''} t^i T_i$.  Let $q^i = e^{t_i}$ for $i \in I''$.  Denote by $P^\cY$ the power series $\CC[[\bt ', \bq]]$.
Choose $\alpha_1, \ldots, \alpha_n$ from $H^*_{\op{CR}}(\cY) \cup H^*_{\op{CR}, \op{c}}(\cY)$.  Assume that for at least one $i$, $\alpha_i$ is in $H^*_{\op{CR}, \op{c}}(\cY)$, or that for some $i < j$, $\alpha_i, \alpha_j \in H^*_{\op{CR}, \op{nar}}(\cY)$.
 Then define
\begin{equation}\label{e:fps2}\langle \langle \alpha_1 \psi^{b_1}, \ldots, \alpha_n \psi^{b_n} \rangle \rangle^\cY(\bt) := \sum_{d \in \op{Eff}}\sum_{k\geq 0}\frac{1}{k!} \langle \alpha_1 \psi^{b_1}, \ldots, \alpha_n \psi^{b_n}, \bt, \ldots, \bt \rangle^\cY_{0,n+k,d}\end{equation}
where a summand is implicitly assumed to be zero if $d = 0$ and $n+k < 3$.  
In the absence of $\psi$-classes this yields a formal power series in $P^\cY$.
 
In the case of a non-proper target, there are two possible quantum products, in analogy with \eqref{e:prod}. 
For elements $\alpha, \beta$ in $H^*_{\op{CR}}(\cY)$, define $\alpha \bullet_{\bt}^\cY \beta \in H^*_{\op{CR}}(\cY) \otimes P^\cY$ by the formula
\[ \langle \alpha \bullet_{\bt} \beta, \gamma \rangle^\cY = \langle \langle \alpha, \beta, \gamma\rangle \rangle^\cY(\bt)\]
for all $\gamma \in H^*_{\op{CR}, \op{c}}(\cY)$.  Similarly to the cup product, we can also multiply a cohomology class with a cohomology class with compact support.  If $\alpha \in H^*_{\op{CR}}(\cY)$ and $\beta \in H^*_{\op{CR}, \op{c}}(\cY)$, define $\alpha \bullet_{\bt}^\cY \beta \in H^*_{\op{CR}, \op{c}}(\cY)$ by the formula
\[ \langle \gamma,  \alpha \bullet_{\bt} \beta \rangle^\cY = \langle \langle \alpha, \beta, \gamma\rangle \rangle^\cY(\bt)\]
for all $\gamma \in H^*_{\op{CR}}(\cY)$.  
Alternatively, in the first case the above definition is equivalent to
\begin{equation}\label{e:noncompprod}\alpha \bullet_{\bt}^\cY \beta := \sum_{d \in \op{Eff}} \sum_{k \geq 0} \frac{1}{k!} I_* \circ{ev_3}_* \left( ev_1^*(\alpha) \cup ev_2^*(\beta) \cup \prod_{j=4}^{k+3} ev_j^*(\bt) \cap \left[ \sMbar_{0,k+3}(\cY, d)\right]^{vir}
\right).\end{equation}
The second case is the same but replacing $ev_2^*$ and ${ev_3}_*$ with ${ev_2}_{\op{c}}^*$ and ${ev_3}^{\op{c}}_*$, the pullback and pushforward in cohomology with compact support.


Exactly as in the proper case, 
The pairing $\langle - , - \rangle^\cY$ can be extended to a $z$-sesquilinear pairing $S^\cY$ between $H^*_{\op{CR}}(\cY) \otimes P^\cY[z, z^{-1}]$ and $H^*_{\op{CR}, \op{c}}(\cY) \otimes P^\cY[z, z^{-1}]$ by defining
\[ S^\cY( u(z), v(z)) := (2 \pi \sqrt{-1} z)^{\dim(\cY)} \langle u(-z), v(z) \rangle^\cY.\]

\begin{defi}
As before, the Dubrovin connection is defined by the formulas
\[ \nabla_i^\cY  = \partial_i + \frac{1}{z} T_i \bullet_{\bt}^\cY\]
and
\[\nabla_{z}^\cY = \partial_z - \frac{1}{z^2} \mathfrak E \bullet^\cY_{\bt} + \frac{1}{z} \op{Gr}\]
where recall that $\{T_i\}_{i \in I}$ is a basis for $H^*_{\op{CR}}(\cY)$.  These operators act on both $H^*_{\op{CR}}(\cY) \otimes P^\cY[z, z^{-1}]$ and $H^*_{\op{CR}, \op{c}}(\cY) \otimes P^\cY[z, z^{-1}]$.  In particular, for $\alpha \in H^*_{\op{CR}, \op{c}}(\cY) \otimes P^\cY[z, z^{-1}]$, $T_i \bullet_{\bt}^\cY \alpha$ and therefore $\nabla_i^\cY \alpha$ lies in $H^*_{\op{CR}, \op{c}}(\cY) \otimes P^\cY[z, z^{-1}]$.
To avoid confusion, we will denote the connection by $\nabla^{\cY}$ when acting on cohomology and by $\nabla^{\cY, \op{c}}$ when acting on cohomology with compact support.
\end{defi}

%
%
%
%
%
%

Define $L^\cY(\bt, z)$ 
by
\begin{equation}\label{e:Lgen0}
L^{\cY}(\bt, z)(\alpha) := \alpha + \sum_{d \in \op{Eff}} \sum_{k \geq 0} \frac{1}{k!}
I_* \circ{ev_2}_* \left(\frac{ev_1^*(\alpha)}{-z - \psi} \prod_{j=3}^{k+2} ev_j^*(\bt) \cap \left[\sMbar_{0, k+2}(\cY, d)\right]^{vir} \right)
\end{equation}
for $\alpha $ in  $H^*_{\op{CR}}(\cY)$.  
Define
 $L^{\cY, \op{c}}(\bt, z)$ 
by
\begin{equation}\label{e:Lgen0c}
L^{\cY, \op{c}}(\bt, z)(\alpha) := \alpha + \sum_{d \in \op{Eff}} \sum_{k \geq 0} \frac{1}{k!}
I_* \circ{ev_2}^{\op{c}}_* \left(\frac{{ev_1}_{\op{c}}^*(\alpha)}{-z - \psi} \prod_{j=3}^{k+2} ev_j^*(\bt) \cap \left[\sMbar_{0, k+2}(\cY, d)\right]^{vir} \right)
\end{equation}
for $\alpha $ in   
 $H^*_{\op{CR}, \op{c}}(\cY)$.  

There is a completely analogous result to Proposition~\ref{p:flat} in the non-proper case.  
\begin{prop}\label{p:cvsreg} Let $\cY$ be a non-proper space. %
The quantum connection $\nabla^\square$ is flat, with fundamental solution $L^\square(\bt, z) z^{- \op{Gr}}z^{\rho(\cY)}$.  More precisely,
\begin{equation}\label{e:TRR1} \nabla^\cY_i \left(L^\cY(\bt, z) z^{- \op{Gr}}z^{\rho(\cY)} \alpha\right) = \nabla^\cY_z \left(L^\cY(\bt, z) z^{- \op{Gr}} z^{\rho(\cY)}\alpha\right) = 0 \end{equation}  
for  $\alpha \in H^*_{\op{CR}}(\cY)$ and
\begin{equation}\label{e:TRR2} \nabla^{\cY, \op{c}}_i \left(L^{\cY, \op{c}}(\bt, z) z^{- \op{Gr}}z^{\rho(\cY)} \beta\right) = \nabla^{\cY, \op{c}}_z \left(L^{\cY, \op{c}}(\bt, z) z^{- \op{Gr}} z^{\rho(\cY)}\beta\right) = 0 \end{equation}
for   $\beta \in H^*_{\op{CR}, \op{c}}(\cY)$.  
Furthermore the pairing $S^\cY$ satisfies 
\begin{equation} \partial_i S^\cY ( u, v) = S^\cY(\nabla^\cY_i u, v) + S^\cY (u, \nabla^{\cY, \op{c}}_i v).\end{equation}
In other words 
$\nabla^{\cY}$ and $\nabla^{\cY, \op{c}}$ 
are dual with respect to $S^\cY$.
Finally, for $\alpha \in H^*_{\op{CR}}(\cY)$ and $\beta \in H^*_{\op{CR}, \op{c}}(\cY)$, 
\begin{equation} \label{e:Lpair}
\langle L^\cY(\bt, - z) \alpha, L^{\cY, \op{c}}(\bt, z) \beta \rangle^\cY = \langle \alpha, \beta\rangle^\cY.\end{equation}
\end{prop}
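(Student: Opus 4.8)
The plan is to reduce Proposition~\ref{p:cvsreg} to the already-established proper case, Proposition~\ref{p:flat}, by a direct comparison of definitions, treating the four assertions (flatness of $\nabla^{\cY}$, flatness of $\nabla^{\cY,\op{c}}$, the Leibniz/duality identity for $S^\cY$, and the pairing identity \eqref{e:Lpair}) in turn. The key observation is that every structure appearing in the non-proper setting is \emph{defined} by exactly the same formulas as in the proper case; the only subtlety is keeping track of whether a given class is in $H^*_{\op{CR}}(\cY)$ or $H^*_{\op{CR},\op{c}}(\cY)$, and that $\nabla^{\cY}$ preserves the former while $\nabla^{\cY,\op{c}}$ preserves the latter. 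So the proof is really a bookkeeping exercise verifying that the formal manipulations underlying Proposition~\ref{p:flat} never require integrating a non-compactly-supported class over $\sMbar_{0,n}(\cY,d)$.

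First I would establish flatness and the statements \eqref{e:TRR1} and \eqref{e:TRR2}. The flatness of $\nabla^{\cY}$ and $\nabla^{\cY,\op{c}}$, as well as the fact that $L^{\cY}(\bt,z)z^{-\op{Gr}}z^{\rho(\cY)}$ and $L^{\cY,\op{c}}(\bt,z)z^{-\op{Gr}}z^{\rho(\cY)}$ are fundamental solutions, follow from the topological recursion relations and the string/divisor equations on $\sMbar_{0,n}(\cY,d)$. These identities are consequences of boundary pushforward relations among virtual classes, which hold verbatim for $\cY$ since $[\sMbar_{0,n}(\cY,d)]^{vir}$ is well-defined and the forgetful and gluing morphisms behave as usual; the properness of the evaluation maps is exactly what guarantees that ${ev_i}_*$ (and its compactly-supported variant ${ev_i}^{\op{c}}_*$) are defined, so each term in \eqref{e:Lgen0} and \eqref{e:Lgen0c} makes sense. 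The manipulation proving $\nabla^{\cY}_i L^{\cY}(\bt,z)(\cdots)=0$ is formally identical to the proper case once one notes that in \eqref{e:noncompprod} the output class $\alpha\bullet^\cY_\bt\beta$ lies in $H^*_{\op{CR}}(\cY)$ when $\alpha,\beta\in H^*_{\op{CR}}(\cY)$, and in $H^*_{\op{CR},\op{c}}(\cY)$ when one factor is compactly supported, which is precisely the compatibility needed for the connection to act on each module. I would phrase this as: "the proof of Proposition~\ref{p:flat} goes through mutatis mutandis, since it relies only on formal properties of the virtual class and the properness of evaluation maps, both of which hold here."

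Next I would treat the duality $\partial_i S^\cY(u,v)=S^\cY(\nabla^\cY_i u,v)+S^\cY(u,\nabla^{\cY,\op{c}}_i v)$. This is a formal consequence of the definition $S^\cY(u,v)=(2\pi\sqrt{-1}z)^{\dim\cY}\langle u(-z),v(z)\rangle^\cY$ together with (i) the fact that $\langle-,-\rangle^\cY$ is the pairing between $H^*_{\op{CR}}(\cY)$ and $H^*_{\op{CR},\op{c}}(\cY)$ induced by $\int_{I\cY}\alpha\cup I_*(\beta)$, and (ii) the Frobenius-type identity $\langle T_i\bullet^\cY_\bt u,v\rangle^\cY=\langle u,T_i\bullet^\cY_\bt v\rangle^\cY$, which holds for $u\in H^*_{\op{CR}}(\cY)$ and $v\in H^*_{\op{CR},\op{c}}(\cY)$ by the two WDVV-style definitions of the products (the defining formula for $\alpha\bullet^\cY_\bt\beta$ acting into $H^*_{\op{CR},\op{c}}(\cY)$ was set up precisely so that these brackets agree). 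Expanding $\partial_i$ using the definition of the quantum product and the string/divisor equations, and using the $z$-sign flip in the first slot of $S^\cY$, yields the identity exactly as in \cite{Iri3}. Finally, \eqref{e:Lpair} follows from \eqref{e:invadj} in the proper case, or rather from its proof: the adjointness $\langle L^\square(\bt,-z)\alpha,L^\square(\bt,z)\beta\rangle^\square=\langle\alpha,\beta\rangle^\square$ is derived from the flatness of $S^\square$ plus the initial condition $L^\square(\bt,z)=\op{id}+O(1/z)$, and the same derivation applies with $L^\cY$ in the first slot, $L^{\cY,\op{c}}$ in the second, and the cohomology/compactly-supported pairing $\langle-,-\rangle^\cY$ throughout.

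The main obstacle I expect is not any deep difficulty but rather the care required to check that at no point in adapting the proper-case arguments does one inadvertently need a pushforward of, or an integral against, a class that fails to be compactly supported — in particular, verifying that the compactly-supported pushforward ${ev_i}^{\op{c}}_*$ satisfies the same projection-formula and base-change compatibilities (with respect to the gluing morphisms in the boundary of $\sMbar_{0,n}(\cY,d)$) that the ordinary ${ev_i}_*$ does, so that the topological recursion relations can be applied to $L^{\cY,\op{c}}$. This is standard but must be stated; once it is granted, the flatness, the $S^\cY$-duality, and \eqref{e:Lpair} all follow by the same formal computations as in the proper case, and I would present the proof as an appeal to Proposition~\ref{p:flat} together with these compatibility remarks rather than reproducing the computations in full.
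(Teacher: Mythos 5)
Your overall strategy — reduce to the proper case and observe that every structure is defined by the same formulas, with the only subtlety being bookkeeping of compact versus non-compact support — is exactly the strategy the paper takes, and nearly all of your discussion lines up with the paper's proof. However, you have waved your hand at the one point where the paper actually has to supply a new argument, and this is worth spelling out as a genuine (if small) gap. The issue is not ``projection-formula and base-change compatibilities of ${ev_i}^{\op{c}}_*$'' in the abstract; it is the precise form of the topological recursion relation and the diagonal-class identity that drives it. In the proper case, the TRR is proved by pulling back from a boundary divisor that is a fiber product over $I\cX$, and the key step is that the class of the diagonal in $H^*_{\op{CR}}(\cX)\otimes H^*_{\op{CR}}(\cX)\cong H^*(I\cX\times I\cX)$ decomposes as $\sum_i T_i\otimes T^i$ for Poincar\'e-dual bases. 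When $\cY$ is non-compact, the diagonal class does not live in $H^*_{\op{CR}}(\cY)\otimes H^*_{\op{CR}}(\cY)$: one needs the K\"unneth isomorphism $H^*_{\op{CR}}(\cY)\otimes H^*_{\op{CR},\op{c}}(\cY)\cong H^*_{\op{CR},\text{c-vert}}(\cY\times\cY)$ (compact support in the second factor), under which $[\Delta]=\sum_i T_i\otimes T^i$ with $\{T_i\}$ a basis of $H^*_{\op{CR}}(\cY)$ and $\{T^i\}$ the dual basis of $H^*_{\op{CR},\op{c}}(\cY)$. Correspondingly, the TRR in this setting reads
\[
\br{\br{ \alpha \psi^{b_1+1}, \beta \psi^{b_2}, \gamma \psi^{b_3} }}^\cY = \sum_{i \in I}
\br{\br{ \alpha \psi^{b_1}, T_i }}^\cY \br{\br{T^i, \beta \psi^{b_2}, \gamma \psi^{b_3} }}^\cY
\]
for $\alpha\in H^*_{\op{CR},\op{c}}(\cY)$ and $\beta,\gamma\in H^*_{\op{CR}}(\cY)$ (and the companion relation with the roles of $T_i$ and $T^i$ swapped when $\gamma$ is the compactly supported insertion); each factor on the right is well-defined precisely because $T_i$ and $T^i$ land on opposite sides of the pairing. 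This is what makes ``goes through mutatis mutandis'' honest. Absent this identification of the diagonal class and the corresponding placement of $T_i$ versus $T^i$ in the TRR, it is not clear that each bracket in the split contribution is even defined, so you should state this lemma explicitly rather than bundle it into a general remark about pushforward compatibilities. Your treatment of the $S^\cY$-duality and of \eqref{e:Lpair} is fine and matches the paper's reduction to Proposition~\ref{p:flat}.
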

\begin{proof}
The proof of these statements is almost identical to the case of a proper target.  The only difference is the precise statement of the topological recursion relation for a non-compact target, which is used to prove \eqref{e:TRR1} and \eqref{e:TRR2}.  In this context the statement is that for $\alpha \in H^*_{\op{CR}, \op{c}}(\cY)$, $\beta, \gamma \in H^*_{\op{CR}}(\cY)$, and $b_1, b_2, b_3 \geq 0$,
\[
\br{\br{ \alpha \psi^{b_1+1}, \beta \psi^{b_2}, \gamma \psi^{b_3} }}^\cY = \sum_{i \in I}
\br{\br{ \alpha \psi^{b_1}, T_i }}^\cY \br{\br{T^i, \beta \psi^{b_2}, \gamma \psi^{b_3} }}^\cY\]
where recall that $\{T_i\}_{i \in I}$ and $\{T^i\}_{i \in I}$ are bases for $H^*_{\op{CR}}(\cY)$ and $H^*_{\op{CR}, \op{c}}(\cY)$ respectively, so both factors in the right hand sum are well-defined.  Similarly, if $\alpha, \beta \in H^*_{\op{CR}}(\cY)$ and $\gamma \in H^*_{\op{CR}, \op{c}}(\cY)$ we have
\[
\br{\br{ \alpha \psi^{b_1+1}, \beta \psi^{b_2}, \gamma \psi^{b_3} }}^\cY = \sum_{i \in I}
\br{\br{ \alpha \psi^{b_1}, T^i }}^\cY \br{\br{T_i, \beta \psi^{b_2}, \gamma \psi^{b_3} }}^\cY.\]
The proof of these statements is identical to the proof in the case of a proper target, after using the following version of the K\"unneth formula
\[ H^*_{\op{CR}}(\cY) \otimes H^*_{\op{CR}, \op{c}}(\cY) \cong H^*_{CR, c-vert}(\cY \times \cY),\]
where the right-hand side denotes cohomology with compact vertical support (i.e. in the second factor).  Under this isomorphism, the class of the diagonal $[\Delta]$ in 
$H^*_{CR, c-vert}(\cY \times \cY)$ is given by $\sum_{i \in I} T_i \otimes T^i$.
\end{proof}

\begin{rema}
The notion above of the compactly supported quantum connection and solution were described previously in Section 2.5 of \cite{Iri2}.  We expect $\nabla^{\cY}$ and $\nabla^{\cY, \op{c}}$ to be related to the conjectures of \cite{BH}.
\end{rema}

\subsection{Narrow quantum $D$-module}\label{s:nar}

We cannot define a (non-equivariant) quantum $D$-module in the case that $\cY$ is non-proper due to the fact that $H^*_{\op{CR}}(\cY)$ does not have a well-defined pairing.  In this section we show that there is a well-defined \emph{narrow} quantum $D$-module for $\cY$, defined in terms of the narrow cohomology of \S~\ref{s:1}.  We will see in \S~\ref{s:narqsd} the geometric significance of this construction.


\begin{prop}\label{p:phi}
The map $\phi: H^*_{\op{CR}, \op{c}}(\cY) \to H^*_{\op{CR}}(\cY)$ commutes with the quantum product, quantum connection, and the fundamental solution: For $u \in H^*_{\op{CR}, \op{c}}(\cY) \otimes P^\cY$ and $T_i \in H^*_{\op{CR}}(\cY)$
\begin{align}
T_i \bullet_{\bt}^\cY \phi(u) = \phi(T_i \bullet_{\bt}^\cY u); \label{e:c1}\\
\nabla_i^{\cY} \phi(u) = \phi \left( \nabla_i^{\cY, \op{c}} u\right); \label{e:c2}\\
L^\cY(\bt, z) \phi(u) = \phi \left( L^{\cY, \op{c}}(\bt, z) u \right). \label{e:c3}
\end{align}
\end{prop}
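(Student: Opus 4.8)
The plan is to prove the three compatibilities \eqref{e:c1}, \eqref{e:c2}, \eqref{e:c3} by reducing everything to the single fact that $\phi$ is compatible with the relevant pushforwards and pullbacks in (compactly supported) cohomology, namely $\phi \circ ev_*^{\op{c}} = ev_* \circ \phi$ and $ev^* \circ \phi = \phi \circ ev_{\op{c}}^*$, where $ev$ is a (proper) evaluation map, together with the projection-type formula \eqref{e:phicup} relating the cup product and the compactly supported cup product. All three statements have the same shape: a correspondence operator built from $\cap [\sMbar_{0,n}(\cY,d)]^{vir}$, with insertions pulled back by evaluation maps and a final pushforward by $ev$, and one simply needs to check that replacing the compactly-supported versions of pullback/pushforward by the ordinary ones intertwines with $\phi$ termwise in the sum over $d$ and $k$.

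First I would establish \eqref{e:c1}. Using the description \eqref{e:noncompprod} of $\bullet^\cY_\bt$ on ordinary cohomology, and its ``second case'' variant on compactly supported cohomology (pushforward ${ev_3}^{\op{c}}_*$, pullback ${ev_2}^{\op{c}}_*$), I would write out both $T_i \bullet^\cY_\bt \phi(u)$ and $\phi(T_i \bullet^\cY_\bt u)$ as sums over $d,k$ of correspondence terms. On the compactly supported side the relevant term is $I_* \circ {ev_3}^{\op{c}}_*\bigl(ev_1^*(T_i)\cup {ev_2}^{\op{c}}_*(\text{stuff})\cup\cdots\bigr)$; applying $\phi$ and using $\phi\circ {ev_3}^{\op{c}}_* = {ev_3}_*\circ\phi$ (Proposition~\ref{p:pp}, valid since the genus zero evaluation maps are proper by Lemma~\ref{l:propmap}) moves $\phi$ inside, and then \eqref{e:phicup} lets one replace the compactly supported pullback of $u$ by $\phi(u)$ pulled back ordinarily. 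This gives exactly the $d,k$ term of $T_i\bullet^\cY_\bt\phi(u)$. Summing over $d,k$ yields \eqref{e:c1}.

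Next, \eqref{e:c2} follows formally from \eqref{e:c1}: since $\nabla_i^{\cY} = \partial_i + \frac1z T_i\bullet^\cY_\bt$ and $\nabla_i^{\cY,\op{c}} = \partial_i + \frac1z T_i\bullet^\cY_\bt$ act by the same formula (just on the two different spaces), and $\phi$ is $P^\cY$-linear so commutes with $\partial_i$, we get $\nabla_i^{\cY}\phi(u) = \partial_i\phi(u) + \frac1z T_i\bullet^\cY_\bt\phi(u) = \phi(\partial_i u) + \frac1z\phi(T_i\bullet^\cY_\bt u) = \phi(\nabla_i^{\cY,\op{c}} u)$. For \eqref{e:c3} I would argue exactly as for \eqref{e:c1}, now comparing the defining formulas \eqref{e:Lgen0} and \eqref{e:Lgen0c} for $L^\cY$ and $L^{\cY,\op{c}}$: the only difference between the two is that \eqref{e:Lgen0c} uses ${ev_1}_{\op{c}}^*$ and ${ev_2}^{\op{c}}_*$ in place of $ev_1^*$ and ${ev_2}_*$, while the interior factors $\prod ev_j^*(\bt)$ and $\psi$-classes are identical; applying $\phi$ to $L^{\cY,\op{c}}(\bt,z)u$ termwise and pushing $\phi$ through ${ev_2}^{\op{c}}_*$ via Proposition~\ref{p:pp}, then using $\phi\circ {ev_1}_{\op{c}}^* = ev_1^*\circ\phi$, converts the $u$-term into the corresponding $\phi(u)$-term of $L^\cY(\bt,z)$; note the leading term $u \mapsto \phi(u)$ matches as well.

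The main obstacle is purely bookkeeping: making sure the cap-product-with-virtual-class correspondence really does commute with $\phi$, i.e.\ that pushing $\phi$ through the composition $I_*\circ {ev}^{\op{c}}_*(\,-\cap[\sMbar]^{vir})$ is legitimate. This rests on the compatibility square \eqref{e:PD} between $\phi$, $\underline\phi$ and Poincar\'e duality (so that $\phi$ intertwines compactly supported pushforward with Borel--Moore pushforward) together with the fact that capping with a virtual class on a proper evaluation map is itself a pushforward-type operation; once \eqref{e:PD} and Proposition~\ref{p:pp} are in hand this is routine, and the projection formula \eqref{e:phicup} handles the insertion factor that is not pushed forward. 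I do not anticipate any genuine difficulty beyond organizing these term-by-term manipulations and citing Lemma~\ref{l:propmap} to guarantee the evaluation maps are proper so that all the compactly supported operations are defined.
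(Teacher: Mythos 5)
Your argument is correct in substance, but it takes a slightly different route from the paper, and there is one subtlety worth flagging explicitly. The paper avoids manipulating cohomology classes on the moduli space altogether. To prove \eqref{e:c1} it pairs both sides against an arbitrary test class $T^j\in H^*_{\op{CR},\op{c}}(\cY)$ and reduces to the chain of identities
\[ ev_2^*(\phi(\beta))\cup {ev_3}^*_{\op{c}}(T^j)=\phi({ev_2}^*_{\op{c}}(\beta))\cup {ev_3}^*_{\op{c}}(T^j)={ev_2}^*_{\op{c}}(\beta)\cup {ev_3}^*_{\op{c}}(T^j), \]
using only the commutativity of $\phi$ with pullback (from Proposition~\ref{p:pp}) and \eqref{e:phicup}. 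All the $\phi$-shuffling happens at the level of integrands, so there is never any need to commute $\phi$ through a pushforward on $\sMbar_{0,n}(\cY,d)$. Your version instead works directly with the correspondence formula and pushes $\phi$ through $I_*\circ{ev_3}^{\op{c}}_*(-\cap[\sMbar]^{vir})$.

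The issue you should be more careful about: Proposition~\ref{p:pp} is stated for smooth proper oriented maps between manifolds, whereas $ev_3$ is a proper morphism from a (generally singular) Deligne--Mumford stack, and the relevant pushforward is the \emph{virtual} pushforward $\alpha\mapsto {ev_3}_*(\alpha\cap[\sMbar]^{vir})$. You acknowledge this ("capping with a virtual class on a proper evaluation map is itself a pushforward-type operation"), and the underlying fact is true because the push/pull side of the argument lives entirely in Borel--Moore homology where \eqref{e:PD} and the naturality of $\underline\phi$ apply, but it does not follow just by citing Proposition~\ref{p:pp}. This is precisely the bookkeeping the paper's pairing-based argument sidesteps: by reducing to the triple bracket $\br{\br{T_i,\phi(\beta),T^j}}^\cY$ and applying \eqref{e:phicup} to the compactly supported cup product of the two pullbacks ${ev_2}^*_{\op{c}}(\beta)$ and ${ev_3}^*_{\op{c}}(T^j)$ inside a single integral over $[\sMbar]^{vir}$, one never has to justify commuting $\phi$ with a virtual pushforward. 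If you want to keep your direct approach, you should spell out the naturality of $\underline\phi$ with respect to the refined Gysin/virtual pushforward rather than appealing to Proposition~\ref{p:pp}; otherwise, the paper's reduction to pairings is the cleaner path. Your treatments of \eqref{e:c2} (formal consequence of \eqref{e:c1} and $P^\cY$-linearity of $\phi$) and \eqref{e:c3} (same mechanism as \eqref{e:c1}) agree with what the paper does.
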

\begin{proof}
To prove \eqref{e:c1}, it suffices to show that for $T^j, \beta \in H^*_{\op{CR}, \op{c}}(\cY)$
\[\langle T_i \bullet_{\bt}^\cY \phi(\beta), T^j\rangle^\cY = \langle \phi(T_i \bullet_{\bt}^\cY \beta), T^j\rangle^\cY.\]
The left hand side is equal to 
$\br{\br{ T_i, \phi(\beta), T^j }}^\cY$.  By  the fact that $\phi$ commutes with pullback and \eqref{e:phicup}, we note that 
\[ ev_2^*(\phi(\beta)) \cup {ev_3}_{\op{c}}^*(T^j) = \phi( {ev_2}_{\op{c}}^*(\beta)) \cup {ev_3}_{\op{c}}^*(T^j) = {ev_2}_{\op{c}}^*(\beta) \cup {ev_3}_{\op{c}}^*(T^j).\]
This implies that 
  \[\br{\br{ T_i, \phi(\beta), T^j }}^\cY = \br{\br{ T_i, \beta, T^j }}^\cY.\]
 Again by \eqref{e:phicup}, 
 \[\langle \phi(T_i \bullet_{\bt}^\cY \beta), T^j\rangle^\cY = \langle \phi(T^j), T_i \bullet_{\bt}^\cY \beta \rangle^\cY,\]
 but by an identical argument as above, this is given by 
 \[  \br{\br{ T_i, \beta, \phi(T^j) }}^\cY =  \br{\br{ T_i, \beta, T^j }}^\cY.\]
 Thus the two are equal.
 Formula \eqref{e:c2} follows immediately from \eqref{e:c1} and \eqref{e:c3} uses a similar argument.
 \end{proof}
Define a $z$-sesquilinear pairing $S^{\cY, \op{nar}}$ on $H^*_{\op{CR}, \op{nar}}(\cY) \otimes P^\cY[z, z^{-1}]$ by 
\[ S^{\cY, \op{nar}}( u(z), v(z)) := (2 \pi \sqrt{-1} z)^{\dim(\cY)} \langle u(-z), v(z) \rangle^{\cY, \op{nar}}.\]

\begin{coro}\label{c:qsdnar}
For any $\bt \in H^*_{\op{CR}}(\cY)$, the narrow state space is closed under the quantum product $\bullet_{\bt}^\cY$. 
The quantum connection $\nabla^\cY$ and solution $L^\cY( \bt, z)$ preserve $ H^*_{\op{CR}, \op{nar}}(\cY)$. 
The pairing $S^{\cY, \op{nar}}$ is flat with respect to $\nabla^\cY$, i.e.
\[ \partial_i S^{\cY, \op{nar}} ( u, v) = S^{\cY, \op{nar}}(\nabla^\cY_i u, v) + S^\cY (u, \nabla^\cY_i v).\]
Finally, for $\alpha, \beta \in H^*_{\op{CR}, \op{nar}}(\cY)$, 
\[
\langle L^\cY(\bt, - z) \alpha, L^\cY(\bt, z) \beta \rangle^{\cY, \op{nar}} = \langle \alpha, \beta\rangle^{\cY, \op{nar}}.\]
\end{coro}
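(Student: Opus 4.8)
The plan is to reduce everything to Proposition~\ref{p:phi} (the map $\phi$ intertwines the compactly supported structures $\bullet_\bt^\cY,\nabla^{\cY,\op{c}},L^{\cY,\op{c}}$ with the ordinary ones) together with the duality statement of Proposition~\ref{p:cvsreg}, using that $H^*_{\op{CR},\op{nar}}(\cY)=\op{im}(\phi)$ by definition. The closure statements come for free: by \eqref{e:c1} and bilinearity of the quantum product, $\alpha\bullet_\bt^\cY\phi(u)=\phi(\alpha\bullet_\bt^\cY u)$ for every $\alpha\in H^*_{\op{CR}}(\cY)$ and $u\in H^*_{\op{CR},\op{c}}(\cY)\otimes P^\cY$, so $\alpha\bullet_\bt^\cY$ carries the narrow subspace into $\op{im}(\phi)\otimes P^\cY$; specializing $\alpha$ to a narrow class shows $H^*_{\op{CR},\op{nar}}(\cY)$ is closed under $\bullet_\bt^\cY$. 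Equations \eqref{e:c2} and \eqref{e:c3} give $\nabla_i^\cY\phi(u)=\phi(\nabla_i^{\cY,\op{c}}u)$ and $L^\cY(\bt,z)\phi(u)=\phi(L^{\cY,\op{c}}(\bt,z)u)$, both manifestly narrow; for $\nabla_z^\cY$ one notes it is built from $\mathfrak{E}\bullet_\bt^\cY$ (handled by \eqref{e:c1}), the operator $\op{Gr}$ (which commutes with $\phi$ since $\phi$ preserves cohomological degree), and $\partial_z,\partial_{\rho(\cY)}$ (which act only on the coefficient ring). Hence $\nabla^\cY$ and $L^\cY(\bt,z)$ preserve $H^*_{\op{CR},\op{nar}}(\cY)\otimes P^\cY[z,z^{-1}]$.

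The crux is a lifting identity, extracted from the proof of Proposition~\ref{p:nondegpai}: for narrow classes $\alpha,\beta$ and any lift $\tilde\beta\in H^*_{\op{CR},\op{c}}(\cY)$ of $\beta$ one has $\langle\alpha,\beta\rangle^{\cY,\op{nar}}=\langle\alpha,\tilde\beta\rangle^\cY$, and consequently $S^{\cY,\op{nar}}(u,v)=S^\cY(u,\tilde v)$ for any lift $\tilde v$ of $v$. To prove it one unwinds $\langle\alpha,\beta\rangle^{\cY,\op{nar}}=\int_{I\cY}\alpha\cup_{\op{c}}I_*(\beta)$ via Definition~\ref{d:cupcom}, the identity \eqref{e:phicup}, the commutation $I_*\circ\phi=\phi\circ I_*$, and graded-commutativity of the cup product; independence of the choice of $\tilde\beta$ is exactly the orbifold version of Lemma~\ref{l:cupzero}, since two lifts differ by a class in $\ker(\phi)$ and $I_*(\alpha)$ is narrow. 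The same computation applies $z$-degreewise to yield the statement for $S^{\cY,\op{nar}}$, noting also that a $\bt$-dependent class admits a $\bt$-dependent lift (so $\partial_i$ may be applied inside).

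Granting this, flatness of $S^{\cY,\op{nar}}$ is transported from Proposition~\ref{p:cvsreg}: differentiating $S^{\cY,\op{nar}}(u,v)=S^\cY(u,\tilde v)$ gives $\partial_iS^\cY(u,\tilde v)=S^\cY(\nabla_i^\cY u,\tilde v)+S^\cY(u,\nabla_i^{\cY,\op{c}}\tilde v)$; the first term is $S^{\cY,\op{nar}}(\nabla_i^\cY u,v)$ since $\nabla_i^\cY u$ is narrow with $\tilde v$ as an allowed partner, and by \eqref{e:c2} $\nabla_i^{\cY,\op{c}}\tilde v$ is a lift of $\nabla_i^\cY v$, so the second term is $S^{\cY,\op{nar}}(u,\nabla_i^\cY v)$. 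For the last assertion, \eqref{e:c3} shows $L^{\cY,\op{c}}(\bt,z)\tilde\beta$ is a lift of $L^\cY(\bt,z)\beta$, so
\[\langle L^\cY(\bt,-z)\alpha,L^\cY(\bt,z)\beta\rangle^{\cY,\op{nar}}=\langle L^\cY(\bt,-z)\alpha,L^{\cY,\op{c}}(\bt,z)\tilde\beta\rangle^\cY=\langle\alpha,\tilde\beta\rangle^\cY=\langle\alpha,\beta\rangle^{\cY,\op{nar}},\]
where the middle step is \eqref{e:Lpair} of Proposition~\ref{p:cvsreg} and the outer steps are the lifting identity.

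I expect the only real work to lie in establishing the lifting identity cleanly: checking that all choices of lifts genuinely drop out (this is precisely where Lemma~\ref{l:cupzero} is used), and that the identity is compatible with the $z\mapsto-z$ twist and the $\bt$-derivatives built into $S^\cY$. Once this bookkeeping is dispatched, the remainder is a purely formal chase through Propositions~\ref{p:phi} and \ref{p:cvsreg}.
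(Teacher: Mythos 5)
Your proof is correct and follows exactly the approach the paper uses (the paper's own proof is a two‑sentence appeal to Proposition~\ref{p:phi}, Proposition~\ref{p:cvsreg}, and the definition of the narrow subspace as $\op{im}(\phi)$). The ``lifting identity'' $\langle \alpha,\beta\rangle^{\cY,\op{nar}} = \langle\alpha,\tilde\beta\rangle^{\cY}$ that you isolate as the crux — and verify via \eqref{e:phicup}, $I_*\circ\phi=\phi\circ I_*$, and Lemma~\ref{l:cupzero} — is precisely the bridge the paper leaves implicit when it says the last two claims ``follow from the same equations together with Proposition~\ref{p:cvsreg},'' so you have filled in the intended details rather than taken a different route.
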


\begin{proof}
The first two claims are immediate from \eqref{e:c1}, \eqref{e:c2},  \eqref{e:c3}, and the definition of $H^*_{\op{CR}, \op{nar}}(\cY)$ as the image of $\phi$.
The last two claims follow from the same equations together with Proposition~\ref{p:cvsreg}.
\end{proof}

With this we can define
\begin{defi}
The \emph{narrow} quantum $D$-module of $\cY$ is defined to be 
\[ QDM_{\op{nar}}(\cY) := (H^*_{\op{CR}, \op{nar}}(\cY) \otimes P^\cY[z, z^{-1}], \nabla^\cY, S^{\cY, \op{nar}})\]
Note that the coefficients ring $P^\cY$ is \emph{not} restricted to just the dual coordinates of the narrow cohomology.  
\end{defi}

\subsection{Integral structure}
Identical considerations to \S~\ref{ss:fs} in the non-proper case allow one to define dual integral lattices in 
$\op{ker}( \nabla^\cY)$ and $\op{ker}( \nabla^{\cY, c})$, compatible with the Dubrovin connection.
\begin{assumption}\label{as:nc}
The compactly supported Chern character \[\ch^{\op{c}}: D_{\op{c}}(\cY) \to H^*_{\op{CR}, \op{c}}(\cY)\]
is given as Definition~\ref{d:cch} of the appendix.
Assume that $H^*_{\op{CR}, \op{c}}(\cY)$ is spanned by the image of $\ch^{\op{c}}$ and that $H^*_{\op{CR}}(\cY)$ is spanned by the image of 
\[\ch: D(\cY) \to H^*_{\op{CR}}(\cY).\]
\end{assumption}
Importantly, Assumption~\ref{as:nc} holds  if $\cY$ is the total space of a vector bundle $\cE$ on $\cX$ and $H^*_{\op{CR}}(\cX)$ is spanned by the image of $\ch$.
Let $F$ be an object in $D_{\op{c}}(\cY)$, assume $F$ can be represented by a complex $F^\bullet$ which is exact outside a proper substack $\cX$. 
Define $\bs^{\cY, \op{c}}(F)(\bt, z)$ to be 
\[ \frac{1}{(2 \pi \sqrt{-1})^{\dim(\cY)}} L^{\cY, \op{c}}(\bt, z) z^{-Gr} z^{\rho(\cY)} \hat \Gamma_{\cY} \cup_{I\cY}\left( (2 \pi \sqrt{-1})^{\deg_0/2} I^*(\ch^{\op{c}}(F))\right),\]
where $\ch^{\op{c}}(F)  = \ch^{\op{c}}(F^\bullet)$ is given by Definition~\ref{d:cch} and  $\cup_{I\cY}$ denotes the ordinary cup product on $H^*(I\cY)$.
Similarly, for $F \in D(\cY)$, define $\bs^{\cY}(F)(\bt, z)$ to be
\[ \frac{1}{(2 \pi \sqrt{-1})^{\dim(\cY)}} L^{\cY}(\bt, z) z^{-Gr} z^{\rho(\cY)} \hat \Gamma_{\cY} \cup_{I\cY} \left( (2 \pi \sqrt{-1})^{\deg_0/2} I^*(\ch(F))\right).\]
We obtain lattices
\[\{\bs^{\cY, \op{c}}(F)(\bt, z) | F \in D_{\op{c}}(\cY)\} \text{ and } \{\bs^{\cY}(F)(\bt, z) | F \in D(\cY)\},\]
which are dual
with respect to the pairing $S^\cY$.  Proposition~\ref{p:pairing} holds in this context by the same argument.  Namely, 
\[ S^\cY (\bs^\cY(F)(\bt, z), \bs^{\cY, \op{c}}(F ')(\bt, z)) = e^{ \pi \sqrt{-1} \dim(\cY)}\chi(F ', F).\] See Section 2.5 of \cite{Iri2} for a similar description.

By 
Proposition~\ref{p:cnar} the orbifold Chern character map $\ch: D(\cY) \to H^*_{\op{CR}}(\cY)$ maps $D_{\op{c}}(\cY)$ to $H^*_{\op{CR}, \op{nar}}(\cY)$.  
Therefore,
given an object $F$ in $D_{\op{c}}(\cY)$,  define $\bs^{\cY, \op{nar}}(F)(\bt, z)$ as
\[ \frac{1}{(2 \pi \sqrt{-1})^{\dim(\cY)}} L^\cY(\bt, z) z^{-Gr} z^{\rho(\cY)} \hat \Gamma_{\cY} \cup_{I\cY} \left( (2 \pi \sqrt{-1})^{\deg_0/2} I^*(\ch(F))\right).\]  
\begin{defi} Define the integral structure for $QDM_{\op{nar}}(\cY)$ to be
\[ \{ \bs^{\cY, \op{nar}}(F)(\bt, z) | F \in D_{\op{c}}(\cY) \}.\]  This set forms a lattice in $\op{ker}( \nabla^\cY|_{H^*_{\op{CR}, \op{nar}}(\cY) })$ 
\end{defi}

\section{Equivariant Euler twistings}\label{s:twisting}
In many cases, twisted invariants of a vector bundle $\cE \to \cX$ are closely related to Gromov--Witten invariants of both the total space and a corresponding complete intersection.  In this section we recall the connections to each, with a particular focus on the relationship to the \emph{non-equivariant} Gromov--Witten theory of the total space of $\cE^\vee$, which has not been studied as thoroughly as the complete intersection.

Let $\cE \to \cX$ be a convex vector bundle.  Let 
\[j: \cZ \to \cX\] denote a smooth sub-variety of $\cZ$ defined by the vanishing of a regular section of $\cE$. 
Let $T = \CC^*$ act on $\cE$ by scaling in 
the fiber direction, with equivariant parameter $\lambda$.

\subsection{Subvarieties}
Choose $s_k$ such that  $\bs(\cE) = e_\lambda(\cE)$, the equivariant Euler characteristic:
\begin{equation}\label{e:specCI}
s_0 = \ln(\lambda)\text{, } s_k = (-1)^{k-1}(k-1)!/\lambda^k \text{ for } k > 0.
\end{equation}
In this case, the genus-zero $\bs$-twisted invariants with respect to $\cE$ are related to invariants of the local complete intersection subvariety $\cZ$ cut out by a generic section of $\cE$ by the so-called \emph{quantum Lefschetz principle} (\cite{CG, Ts}).  
One way of phrasing this is the following:
\begin{prop}[Proposition 2.4 \cite{Iri3}] \label{p:noneq1} Let $L^{e_\lambda(\cE)}(\bt, z)$ denote the fundamental solution of the equivariant twisted theory of $\cE$ after specializing parameters as in \eqref{e:specCI}.  Then the non-equivariant limit
\[L^{e(\cE)}(\bt, z) := \lim_{\lambda \mapsto 0} L^{e_\lambda(\cE)}(\bt, z)\]
is well defined.  Furthermore,
for $\alpha \in H^*_{\op{CR}}(\cX)$,
\[ j^*\left(L^{e(\cE)}(\bt, z) \alpha \right) = L^\cZ(j^*(\bt), z) j^*(\alpha).\]
\end{prop}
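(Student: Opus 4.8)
The plan is to deduce the statement from the quantum Lefschetz principle in the form of a comparison of twisted and ambient $J$-functions (equivalently, fundamental solutions), together with the functoriality of $L$ under restriction to the zero locus. First I would recall that, after the specialization \eqref{e:specCI}, the equivariant Euler-twisted theory of $\cE$ has fundamental solution $L^{e_\lambda(\cE)}(\bt,z)$, which by definition is built out of the $\bs$-twisted descendent invariants $\br{\br{\alpha/(-z-\psi),T^i}}^{\cX,\bs}$ with $\bs$ as in \eqref{e:specCI}. The first key step is to verify that the non-equivariant limit $\lambda\to 0$ exists. This is where the convexity hypothesis on $\cE$ enters: because $\cE$ is convex, for a genus-zero stable map $f:\cC\to\cX$ one has $R^1\pi_*f^*\cE=0$, so $\mathbb{R}\pi_*f^*\cE = \pi_*f^*\cE$ is a vector bundle on the moduli space, and the twisting class $\exp(\sum_k s_k\ch_k(\mathbb{R}\pi_*f^*\cE))$ specialized as in \eqref{e:specCI} becomes $e_\lambda(\pi_*f^*\cE)$, whose non-equivariant limit $e(\pi_*f^*\cE)$ is a genuine (non-equivariant) cohomology class. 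Hence each descendent invariant has a finite limit and $L^{e(\cE)}(\bt,z):=\lim_{\lambda\to 0}L^{e_\lambda(\cE)}(\bt,z)$ is well-defined; this is the content of the first assertion, which is quoted verbatim from \cite[Proposition 2.4]{Iri3} and may simply be invoked.

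The second and main step is the comparison formula $j^*(L^{e(\cE)}(\bt,z)\alpha) = L^\cZ(j^*\bt,z)j^*\alpha$. I would prove this by comparing the two sides term-by-term in the sum over $d$ and $k$ defining $L$ in \eqref{e:Lgen}. On the $\cZ$ side, $L^\cZ$ is assembled from integrals against $[\sMbar_{0,k+2}(\cZ,d)]^{vir}$; on the twisted side, the relevant fact is the functoriality of virtual classes under the regular embedding $j$: the section $s$ of $\cE$ pulls back on $\sMbar_{0,k+2}(\cX,d)$ to a section of the obstruction bundle $\pi_*f^*\cE$ (using convexity, so this is a bundle), whose zero locus is $\sMbar_{0,k+2}(\cZ,d)$, and $j_*[\sMbar_{0,k+2}(\cZ,d)]^{vir} = e(\pi_*f^*\cE)\cap[\sMbar_{0,k+2}(\cX,d)]^{vir}$. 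Combining this with the compatibility of evaluation maps ($j\circ ev_i^\cZ = ev_i^\cX|_{\sMbar(\cZ)}$), the $\psi$-class comparison, and the projection formula for $j$, each term of $j^*L^{e(\cE)}(\bt,z)\alpha$ matches the corresponding term of $L^\cZ(j^*\bt,z)j^*\alpha$. One must also use Assumption~\ref{a:ass2} to ensure everything stays within $H^*_{\op{CR},\op{amb}}(\cZ)$, and the divisor equation to match the $H^2$-directions of $\bt$.

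The hard part, I expect, is the careful bookkeeping in the orbifold setting: making sure that the virtual pushforward formula $j_*[\sMbar_{0,n}(\cZ,d)]^{vir} = e(\pi_*f^*\cE)\cap[\sMbar_{0,n}(\cX,d)]^{vir}$ holds with the correct twisting class when the $\ch_k$ for $k\geq 1$ are present (not just the Euler class), and reconciling this with the specialization \eqref{e:specCI} via the identity $e_\lambda(V) = \lambda^{\rank V}\exp\big(\sum_{k\geq 1}(-1)^{k-1}(k-1)!\,\ch_k(V)/\lambda^k\big)$; one needs the $\lambda\to 0$ limit to kill exactly the right terms and leave $e(\pi_*f^*\cE)$. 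Since all of this is already established in \cite{CG}, \cite{Ts}, and \cite[Proposition 2.4]{Iri3}, and since the paper explicitly cites Proposition~\ref{p:noneq1} as a known result, the cleanest route is to state it as a citation and only indicate the convexity input needed for the non-equivariant limit to make sense, rather than reprove quantum Lefschetz from scratch.
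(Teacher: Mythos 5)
The paper gives no proof of this proposition; the bracketed citation ``[Proposition 2.4 \cite{Iri3}]'' in the theorem header is the whole treatment, and you correctly identify that citing it is the clean route. Your sketch of the underlying argument is essentially sound: convexity forces $R^1\pi_*f^*\cE=0$ so that the twisting class, after the specialization \eqref{e:specCI}, becomes the honest Euler class $e_\lambda(\pi_*f^*\cE)$ of a vector bundle with a well-defined $\lambda\to 0$ limit, and the quantum Lefschetz comparison of virtual classes then matches $j^*L^{e(\cE)}$ with $L^\cZ$ term by term, using Assumption~\ref{a:ass2} so that the dual basis summation lands in ambient cohomology. Two minor caveats worth flagging if you wanted to spell this out: (i) the virtual pushforward formula involves a sum over degrees $d'\in H_2(\cZ)$ mapping to a fixed $d\in H_2(\cX)$, i.e.\ $\sum_{d'\mapsto d}\iota_*[\sMbar_{0,n}(\cZ,d')]^{vir}=e(\pi_*f^*\cE)\cap[\sMbar_{0,n}(\cX,d)]^{vir}$, with $\iota$ the induced map on moduli spaces (not $j$ itself); and (ii) the concern about ``$\ch_k$ for $k\ge 1$'' is moot once you observe that \eqref{e:specCI} is precisely designed so the universal twisting class specializes to $e_\lambda$, after which the statement is just the Euler-class quantum Lefschetz principle. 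Neither point changes your conclusion, and since the paper does not reprove the result, your proposal is consistent with it.
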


\subsection{The total space}\label{ss:ts}
On the other hand, let $\cY$ denote the total space of $\cE^\vee$.  One can consider the equivariant Gromov--Witten invariants of $\cY$ with respect to the torus action described above.  We will denote the equivariant Gromov--Witten invariants by 
$\langle \alpha_1 \psi^{b_1}, \ldots, \alpha_n \psi^{b_n} \rangle^{\cY_T}$.
These take values in $H^*_T(\op{pt}) = \CC[\lambda]$, the equivariant cohomology of a point.  Let 
 $\pi: \cY \to \cX$ denote the projection.
If we specialize the twisted parameters to
\begin{equation}\label{e:spec2}
s_0 ' = -\ln(-\lambda)\text{, } s_k ' = (k-1)!/\lambda^k \text{ for } k > 0
\end{equation}
 then $\bs '(\cE^\vee) = e_\lambda^{-1}(\cE^\vee)$. 
 In this case by (virtual) Atiyah--Bott localization \cite{GP},  the $e_\lambda^{-1}(\cE^\vee)$-twisted invariants compute the equivariant invariants of $\cY$ after inverting the equivariant parameter.
 Given $\alpha_1, \ldots, \alpha_n \in H^*_{\op{CR}, T}(\cX)$ and $b_1, \ldots, b_n \in \ZZ_{\geq0}$, we have 
\[ \langle \alpha_1 \psi^{b_1}, \ldots, \alpha_n \psi^{b_n} \rangle^{e_\lambda^{-1}(\cE^\vee)}_{h, n, d} := 
\int_{[\sMbar_{h,n}(\cc X, d)]^{vir}}   \frac{ \prod_{i=1}^n ev_i^*( \alpha_i) \cup \psi_i^{b_i}}{ e_\lambda(\mathbb R \pi_* f^*(\cE)^\vee)}.\]
 
 

\begin{prop}\label{p:noneq2}  Let $L^{e_\lambda^{-1}(\cE^\vee)}(\bt, z)$ denote the fundamental solution of the equivariant twisted theory of $\cE^\vee$ after specializing parameters as in \eqref{e:spec2}.  Then the non-equivariant limit
\[L^{e^{-1}(\cE^\vee)}(\bt, z) := \lim_{\lambda \mapsto 0} L^{e_\lambda^{-1}(\cE^\vee)}(\bt, z)\]
is well defined.  Furthermore,
for $\alpha \in H^*_{\op{CR}}(\cX)$,
\[ \pi^*\left(L^{e^{-1}(\cE^\vee)}(\bt, z) \alpha \right) = L^\cY(\pi^*(\bt), z) \pi^*(\alpha).\]
\end{prop}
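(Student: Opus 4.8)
The plan is to mimic the proof of the quantum Lefschetz statement, Proposition~\ref{p:noneq1}, replacing the use of $j_*$ and the identity $j^* j_* = e(\cE) \cup -$ with the use of $i_*$ (the zero-section pushforward into $I\cY$) and the identity from Proposition~\ref{p:tot}, namely $i_*^{\op{c}} i^* = e(\pi^*\cE^\vee)\cup -$ on $H^*_{\op{CR}}(\cY)$, together with $\pi^* i_* = \op{id}$. The key input is that under the specialization \eqref{e:spec2}, the $e_\lambda^{-1}(\cE^\vee)$-twisted genus-zero invariants of $\cX$ equal (by virtual localization, \cite{GP}) the $T$-equivariant invariants of $\cY = \tot(\cE^\vee)$, with the only $T$-fixed contributions coming from maps into the zero section. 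This identifies the twisted fundamental solution $L^{e_\lambda^{-1}(\cE^\vee)}(\bt,z)$ on $H^*_{\op{CR},T}(\cX)$ with the equivariant solution $L^{\cY_T}(\bt,z)$ on $H^*_{\op{CR},T}(\cX) \cong H^*_{\op{CR},T}(\cY)$ (the iso being $\pi^*$, or equivalently $i_*$ after the equivariant Thom isomorphism), after the change of variables induced by $\pi^*$.

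First I would write out \eqref{e:Lgen0} for $L^\cY$ and the analogous equivariant formula $L^{\cY_T}$, and observe that because every $\sMbar_{0,k+2}(\cY,d)$ deformation-retracts (virtually) onto $\sMbar_{0,k+2}(\cX,d)$ sitting in the zero section, and $\pi$ is a vector-bundle projection, the equivariant classes appearing are pulled back from $\cX$; the localization computation of \cite{GP} then expresses the pushforward ${ev_2}_* (\,\cdots)$ on $\cY$ in terms of the twisted pushforward on $\cX$ with the equivariant Euler class $e_\lambda(\mathbb{R}\pi_* f^*\cE^\vee)$ in the denominator — which is exactly the twisting in the definition of $\langle\,\rangle^{e_\lambda^{-1}(\cE^\vee)}$. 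Concretely, one shows
\[
\pi^*\left( L^{e_\lambda^{-1}(\cE^\vee)}(\bt,z)\,\alpha \right) = L^{\cY_T}(\pi^*\bt, z)\,\pi^*\alpha
\]
as an identity of $\CC[\lambda,\lambda^{-1}]$-valued (or appropriately localized) power series on $H^*_{\op{CR},T}(\cY)$. Second, I would check that the non-equivariant limit $\lambda \to 0$ of the left-hand side exists: this is where one invokes Proposition~\ref{p:noneq1}-style reasoning, i.e. that after the specialization \eqref{e:spec2} the class $\exp(\sum_k s_k' \ch_k(\mathbb{R}\pi_* f^*\cE^\vee)) = e_\lambda^{-1}(\mathbb{R}\pi_* f^*\cE^\vee)$ has a well-defined non-equivariant limit because $\mathbb{R}\pi_* f^*\cE^\vee$ on genus-zero curves has, by convexity of $\cE$, no higher cohomology so it is represented by an honest bundle (and the $-\ln(-\lambda)$ in $s_0'$ cancels against the determinant factor as in \cite{Iri3}); equivalently, the relevant invariants are polynomial in $\lambda$ of the correct degree. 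Finally, taking $\lambda \to 0$ on both sides and using that $\lim_{\lambda\to 0} L^{\cY_T} = L^\cY$ (the non-equivariant genus-zero invariants of $\cY$ are well-defined by Lemma~\ref{l:propmap}(2), properness of the evaluation maps, and agree with the $\lambda\to 0$ limit of the equivariant ones) yields $\pi^*(L^{e^{-1}(\cE^\vee)}(\bt,z)\alpha) = L^\cY(\pi^*\bt,z)\pi^*\alpha$.

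The main obstacle I expect is making the localization identification in the orbifold/Chen--Ruan setting fully rigorous: one must keep careful track of the age shifts, the rigidified inertia stack $\bar I \cX$ versus $I\cX$, the involution $I$, and the $\psi$-class $\tfrac{1}{-z-\psi}$ under ${ev_2}_*$, and verify that the equivariant Euler class $e_\lambda(\mathbb{R}\pi_* f^*\cE^\vee)$ produced by virtual localization is precisely the twisting factor with specialization \eqref{e:spec2} rather than some sign- or age-shifted variant. A secondary subtlety is the change-of-variables bookkeeping: $\pi^*$ need not be surjective onto $H^*_{\op{CR}}(\cY)$ unless $\cE$ is pulled back from the coarse space, but convexity (Remark~\ref{r:conv}) guarantees this, so $\pi^*$ is an isomorphism on Chen--Ruan cohomology and the identity of solutions on the image of $\pi^*$ is in fact an identity on all of $H^*_{\op{CR}}(\cY)$; I would note this explicitly. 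Once these points are settled, the argument is parallel to \cite{Iri3, GP} and the rest is routine.
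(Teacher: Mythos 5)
Your first step—using virtual localization to identify $\pi^*\circ L^{e_\lambda^{-1}(\cE^\vee)}$ with $L^{\cY_T}\circ\pi^*$ after observing that the twisted pairing matches the equivariant pairing on $\cY$—is exactly the paper's argument (equations \eqref{e:equivinv}, \eqref{e:equivpair}, \eqref{e:toteq}). The problem is your justification of the non-equivariant limit of the left-hand side, which contains a genuine error. You assert that ``$\mathbb{R}\pi_* f^*\cE^\vee$ on genus-zero curves has, by convexity of $\cE$, no higher cohomology so it is represented by an honest bundle.'' This is backwards: convexity says $H^1(\cC, f^*\cE)=0$, not $H^1(\cC, f^*\cE^\vee)=0$. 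For a convex $\cE$ the dual $\cE^\vee$ is negative in the fiber directions, so $\mathbb{R}^1\pi_* f^*\cE^\vee$ is typically a nonzero bundle and $\mathbb{R}^0\pi_* f^*\cE^\vee$ is often nonzero as well; the complex is genuinely virtual of mixed sign. As a consequence, the class $e_\lambda^{-1}(\mathbb{R}\pi_*f^*\cE^\vee)$ involves $e_\lambda(\mathbb{R}^0\pi_*f^*\cE^\vee)$ in the denominator, whose $\lambda\to0$ limit is the ordinary Euler class, which may be zero, so the naive non-equivariant limit of the twisted integrand is not visibly defined on the $\cX$-side. (The cancellation you invoke from \cite{Iri3} is set up for the $e_\lambda(\cE)$-twisted theory of Proposition~\ref{p:noneq1}, where convexity does kill the $\mathbb{R}^1$ term; it does not transfer to the $e_\lambda^{-1}(\cE^\vee)$ side.)

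The paper closes this gap by going the other way around: the existence of the non-equivariant limit is established first on the $\cY$-side of \eqref{e:toteq}, which only requires properness of the genus-zero evaluation maps (Lemma~\ref{l:propmap}, part (2), which \emph{is} a consequence of convexity). Once $L^{\cY_T}$ is known to have a $\lambda\to 0$ limit, the limit of $L^{e_\lambda^{-1}(\cE^\vee)}$ follows by applying the zero-section pullback $i^*$ to both sides of \eqref{e:toteq}, since $i^*\pi^*=\mathrm{id}$. You should replace your direct ``no higher cohomology'' argument with this transfer via $i^*$; without it the first assertion of the proposition is unproved. A small secondary point: $\pi^*:H^*_{\op{CR}}(\cX)\to H^*_{\op{CR}}(\cY)$ is always an isomorphism (each sector of $I\cY$ is a linear bundle over the corresponding sector of $I\cX$), so the caveat you add about surjectivity of $\pi^*$ is unnecessary; what Remark~\ref{r:conv} actually buys you is properness of evaluation maps and the bundle structure of $\mathbb{R}^1\pi_*f^*\cE^\vee(-p_i)$ via Lemma~\ref{l:hnot}, not surjectivity of $\pi^*$.
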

\begin{proof}
By \cite[Section 4]{GivE} using the virtual localization formula of \cite{GP}, the equivariant Gromov--Witten invariants of $\cY$ may be calculated as twisted invariants of $\cX$:
\begin{equation}\label{e:equivinv}
\langle \alpha_1 \psi^{b_1}, \ldots, \alpha_n \psi^{b_n} \rangle^{e_\lambda^{-1}(\cE^\vee)}_{h, n, d} =
 \langle \alpha_1 \psi^{b_1}, \ldots, \alpha_n \psi^{b_n} \rangle^{\cY_T}_{h, n, d} 
\end{equation}
where $\cY_T$ denotes the $T$-equivariant Gromov--Witten theory of $\cY$.
By the (non-virtual) localization theorem, the $e_\lambda^{-1}(\cE^\vee)$-twisted pairing agrees with the equivariant pairing on $\cY$:
\begin{equation}\label{e:equivpair} \langle \alpha, \beta \rangle^{e_\lambda^{-1}(\cE^\vee)} = \langle \pi^*(\alpha), \pi^*(\beta) \rangle^{\cY_T}.\end{equation}
Thus if we are given a basis $\{T_i\}_{i \in I}$ for $\cX$ and a dual basis $\{T^i\}_{i \in I}$ with respect to the $e_\lambda^{-1}(\cE^\vee)$-twisted pairing, then $\{\pi^*(T_i)\}_{i \in I}$ and $\{\pi^*(T^i)\}_{i \in I}$ will be dual bases with respect to the equivariant pairing on $\cY$.

By comparing term-by-term, we conclude that the fundamental solutions agree:
\begin{equation}\label{e:toteq}\pi^*\left(L^{e_\lambda^{-1}(\cE^\vee)}(\bt, z)\alpha\right) = L^{\cY_T}(\pi^*(\bt), z)\pi^*(\alpha).\end{equation}
Because we assume $\cE$ is convex, by Lemma~\ref{l:propmap} the genus-zero evaluation maps are proper and the non-equivariant limit of the right hand side is well defined.  

If we pull back the left hand side via the map $i: \cX \to \cY$ we recover $L^{e_\lambda^{-1}(\cE^\vee)}(\bt, z)\alpha$, which proves the first statement.  
%
Taking the non-equivariant limit of \eqref{e:toteq} finishes the proof.
\end{proof}
In Proposition~\ref{p:ringiso}, we describe the non-equivariant limit of the $e_\lambda^{-1}(\cE^\vee)$-twisted quantum product more explicitly.  To our knowledge this description is new.  It is similar (or more accurately dual) in spirit to the 
$\bullet^{Z}_\bt$ product of \cite{Pan}. 

\begin{lemma}\label{l:hnot}
Assume that $\cE$ is convex. 
For a stable map $f: (\cC, p_1, \ldots, p_n) \to \cX$ from a genus zero $n$-marked orbi-curve $\cC$, \[H^0\left(\cC, f^*(\cE^\vee)(-p_i)\right) = 0\]
for any choice of $1 \leq i\leq n$.
\end{lemma}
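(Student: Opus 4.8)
The plan is to reduce the statement about the orbicurve $\cC$ to a statement about its coarse underlying curve $C$, and then to use the convexity hypothesis together with a degree count. First I would recall from Remark~\ref{r:conv} that convexity of $\cE$ implies $\cE$ is pulled back from a vector bundle $E \to X$ on the coarse space, and that $E$ is itself convex; hence by the same reasoning as in the proof of Lemma~\ref{l:propmap}, the composition $\cC \to \cX \to X$ factors through a map $|f|: C \to X$ of the coarse curves, and $H^0(\cC, f^*(\cE^\vee)(-p_i)) = H^0(C, |f|^*(E^\vee)(-|p_i|))$ where $|p_i|$ is the image of $p_i$ in $C$. So it suffices to prove the vanishing downstairs on $C$ for the ordinary vector bundle $E^\vee$.

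Next I would work component by component on $C$. Since $f$ is a genus zero stable map, $C$ is a tree of $\PP^1$'s. On each irreducible component $C_j \cong \PP^1$, convexity of $E$ forces $|f|^*(E)|_{C_j} \cong \bigoplus_{k=1}^r \cc O(a_{j,k})$ with all $a_{j,k} \geq 0$ (this is the standard consequence of $H^1(C_j, |f|^*E) = 0$ combined with the splitting principle on $\PP^1$), hence $|f|^*(E^\vee)|_{C_j} \cong \bigoplus_k \cc O(-a_{j,k})$ has all summands of degree $\leq 0$. The key observation is then: a section of $|f|^*(E^\vee)$ restricted to a component $C_j$ is a tuple of sections of line bundles of nonpositive degree, so it is either zero or a nowhere-vanishing constant on that $\PP^1$; in particular on the component $C_{j_0}$ containing the marked point $p_i$, the extra vanishing condition $(-p_i)$ kills any such section. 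I would then propagate this vanishing across the tree: a global section of $|f|^*(E^\vee)(-p_i)$ must vanish on $C_{j_0}$, hence in particular at each node joining $C_{j_0}$ to adjacent components, and then by the same nonpositive-degree argument it must vanish on those components too, and so on until the whole tree is exhausted by connectedness.

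The main obstacle — or at least the step requiring the most care — is the inductive propagation across nodes, because one must make sure the bookkeeping of where the marked point and nodes lie is correct and that the argument genuinely uses the tree structure (for a positive-genus curve the statement would be false, and indeed the hypothesis "genus zero" is essential). A clean way to organize this is to induct on the number of irreducible components of $C$: if $C$ is irreducible the argument above finishes it directly; otherwise pick a leaf component $C_\ell$ of the tree that does \emph{not} carry $p_i$ (such a leaf exists once there are at least two components, unless all leaves carry $p_i$, in which case one argues on the component carrying $p_i$ first), note it meets the rest of $C$ in a single node $\nu$, and observe that a global section restricted to $C_\ell$ is a section of a nonpositive-degree bundle hence constant, so it is determined by its value at $\nu$; thus $H^0(C, |f|^*(E^\vee)(-p_i))$ injects into $H^0(C', \,(|f|^*(E^\vee)(-p_i))|_{C'})$ where $C'$ is the closure of $C \setminus C_\ell$ with $\nu$ added as a marked point, and one concludes by the inductive hypothesis applied to the (fewer-component) nodal curve $C'$ with the marked point $p_i$ still present. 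This is essentially the same mechanism that makes convex bundles behave well in Gromov--Witten theory, so I expect no genuine difficulty beyond setting up the induction carefully.
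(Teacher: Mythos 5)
Your proposal follows essentially the same route as the paper: reduce to the coarse curve (the paper does this via $r_*$ and the projection formula, noting $r_*\cc O_\cC(-p_i) = \cc O_C(-p_i)$ in local coordinates; you invoke the analogous reduction from the proof of Lemma~\ref{l:propmap}), observe that convexity forces $|f|^*E^\vee$ to have only nonpositive-degree summands on each $\PP^1$ component so sections are constant (and killed by $(-p_i)$ on the component carrying $p_i$), and finish by induction on the number of components of the genus-zero tree. The paper simply states "by an induction argument on the number of components" where you spell out the leaf-pruning step explicitly, but there is no substantive difference in approach.
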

\begin{proof}
Let $r: \cC \to C$ be the map to the underlying coarse curve.  By Remark~\ref{r:conv} $\cE$ is the pullback of a convex vector bundle $E \to X$.
By Theorem 1.4.1 of \cite{AV}, the map $ \cC \to \cX \to X$ factors through a map $|f|: C \to X$.  Therefore $f^*\cE = r^* |f|^* E$.  We observe that
\begin{align*} H^0\left(C, r_*(f^*(\cE^\vee)(-p_i))\right) &= H^0\left(C, r_*(r^*|f|^*(E^\vee)(-p_i))\right) \\
&= H^0\left(C, |f|^*E^\vee \otimes r_*(\cc O_{\cC}(-p_i))\right)\\
&=
H^0\left(C, |f|^*E^\vee\otimes \cc O_C(-p_i)\right),
\end{align*}
where the second equality is the projection formula and the third can be checked using local coordinates.
 On each irreducible component $C_j$ of $C$, $E \cong \sum_{l=1}^r \cc O(k_l)$ with each $k_l \geq 0$.  From this we see that the only global sections of $|f|^*(E^\vee)(-p_i)|_{C_j}$ are constant sections (possibly just the zero section) if $p_i$ is not on $C_j$, and the zero section if $p_i$ does lie on $C_j$.  By an induction argument on the number of components, the only global section of $|f|^*(E^\vee)(-p_i)$ is the zero section.
\end{proof}
Consider the short exact sequence over the universal curve $\widetilde{\cC}$ lying over $\sMbar_{h,n}(\cc X, d)$:
\[ 0 \to f^*(\cE)^\vee(-p_i) \to f^*(\cE)^\vee \to f^*(\cE)^\vee|_{p_i} \to 0.\]
Pushing forward we obtain the distinguished triangle 
\[ \mathbb R \pi_* f^*(\cE)^\vee(-p_i) \to \mathbb R \pi_* f^*(\cE)^\vee \to  ev_i^*(q^*(\cE^\vee))[1],\]
where $q: I \cX \to \cX$ is the natural map.

From this and the previous lemma we can rewrite the twisted invariant $\langle \alpha_1 \psi^{b_1}, \ldots, \alpha_n \psi^{b_n} \rangle^{e_\lambda^{-1}(\cE^\vee)}_{0, n, d}$ as 
\begin{equation}\label{e:newex}
\int_{[\sMbar_{0,n}(\cc X, d)]^{vir}}   \prod_{j=1}^n \left(ev_j^*( \alpha_j) \cup \psi_j^{b_j} \right) \cup\frac{  e_\lambda(\mathbb R^1 \pi_* f^*(\cE)^\vee(-p_i))}{ ev_i^*(q^*(e_\lambda(\cE^\vee)))}.
\end{equation}
The above expression motivates the following definition.
\begin{defi}
Fix $i$ between $1$ and $n$.  Given $\alpha_1, \ldots, \alpha_n \in H^*_{\op{CR}, T}(\cX)$ and $b_1, \ldots, b_n \in \ZZ_{\geq0}$, define $ \langle \alpha_1 \psi^{b_1}, \ldots, \widetilde{\alpha_i \psi^{b_i}}, \ldots, \alpha_n \psi^{b_n} \rangle^{e^{-1}(\cE^\vee)}_{0, n, d} $ to be the integral
\begin{align*} 
\int_{[\sMbar_{0,n}(\cc X, d)]^{vir}}  \prod_{j=1}^n \left(ev_j^*( \alpha_j) \cup \psi_j^{b_j} \right) \cup  e(\mathbb R^1 \pi_* f^*(\cE)^\vee(-p_i)).\end{align*} 
\end{defi}
Note that $\mathbb R^1 \pi_* f^*(\cE)^\vee(-p_i)$ may be represented by a vector bundle by Lemma~\ref{l:hnot}.
We now define a new quantum product on $\cX$:
\begin{defi}\label{d:mqp}
Let $\{ \overline T_i\}$ be a basis for $H^*_{\op{CR}}(\cX)$ and let $\{ \overline T^i\}$ denote the dual basis.  For $\alpha$ and $\beta \in H^*_{\op{CR}}(\cX)$, define
\[ \alpha \bullet^{\cY \to \cX}_\bt \beta := \br{ \br{ \alpha, \beta, \widetilde{\overline T_i} }}^{e^{-1}(\cE^\vee)} \overline T^i.\]
\end{defi}

\begin{prop}\label{p:ringiso}
The pullback $$\pi^*: H^*_{\op{CR}}(\cX) \otimes P^{\cX} \to H^*_{\op{CR}}(\cY) \otimes P^{\cY}$$ is a ring isomorphism from the $\bullet^{\cY \to \cX}_\bt$-product on $\cX$ to the quantum product $\bullet_\bt^\cY$ on $\cY$.
\end{prop}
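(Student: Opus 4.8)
The plan is to recover each of the two quantum products from its fundamental solution and then transport the identity of Proposition~\ref{p:noneq2} through the flatness equation.

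First I record that $\pi^*$ is already an isomorphism of state spaces and coefficient rings. Since $\cE$ is convex it is pulled back from the coarse space (Remark~\ref{r:conv}), so the isotropy groups act trivially on its fibres; hence each twisted sector $\cY_\gamma$ is the total space of $\cE^\vee|_{\cX_\gamma}$, its age shift equals that of $\cX_\gamma$, and $\pi^*\colon H^*_{\op{CR}}(\cX)\to H^*_{\op{CR}}(\cY)$ is a grading-preserving isomorphism. Moreover $\pi_*\colon H_2(\cY;\QQ)\to H_2(\cX;\QQ)$ is an isomorphism carrying $\op{Eff}(\cY)$ onto $\op{Eff}(\cX)$ (the zero section splits $\pi$), and $\pi^*$ identifies the flat coordinates $\bt$ and the Novikov variables $\bq$ of the two theories. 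Thus $\pi^*$ extends to a $\CC$-linear isomorphism $H^*_{\op{CR}}(\cX)\otimes P^\cX\to H^*_{\op{CR}}(\cY)\otimes P^\cY$ intertwining each $\partial_i$, and it remains only to check that it intertwines $\bullet^{\cY\to\cX}_\bt$ with $\bullet_\bt^\cY$.

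Next I would identify $\bullet^{\cY\to\cX}_\bt$ with the non-equivariant limit of the $e_\lambda^{-1}(\cE^\vee)$-twisted product. Writing a basis dual to $\{\overline T_i\}$ for the twisted Poincar\'e pairing as $\{e_\lambda(\cE^\vee)\cup\overline T^i\}$, where $\{\overline T^i\}$ is Poincar\'e-dual for the ordinary Chen--Ruan pairing (using that multiplication by a class pulled back from $\cX$ is self-adjoint for the Chen--Ruan pairing, since the involution $I$ commutes with $q$), one sees that $\alpha\bullet^{e_\lambda^{-1}(\cE^\vee)}_\bt\beta = e_\lambda(\cE^\vee)\cup\delta$, where $\delta$ is the class characterised by $\langle\delta,\gamma\rangle^\cX = \br{\br{\alpha,\beta,\gamma}}^{e_\lambda^{-1}(\cE^\vee)}$. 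Applying the rewriting \eqref{e:newex} in the output slot, together with the distinguished triangle $\mathbb R\pi_*f^*(\cE)^\vee(-p_i)\to\mathbb R\pi_*f^*(\cE)^\vee\to ev_i^*(q^*(\cE^\vee))[1]$ and Lemma~\ref{l:hnot} (so that $\mathbb R^1\pi_*f^*(\cE)^\vee(-p_i)$ is an honest vector bundle and $\mathbb R^0$ vanishes), the projection formula and Remark~\ref{remaCR} cancel the factor $ev_i^*(q^*(e_\lambda(\cE^\vee)))$ produced in \eqref{e:newex} against the outer $e_\lambda(\cE^\vee)$. What survives is the Euler class $e_\lambda(\mathbb R^1\pi_*f^*(\cE)^\vee(-p_i))$, whose non-equivariant limit is finite; comparing with Definition~\ref{d:mqp} gives $\bullet^{\cY\to\cX}_\bt = \lim_{\lambda\to 0}\bullet^{e_\lambda^{-1}(\cE^\vee)}_\bt$. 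In particular this limit exists.

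Finally, passing to the non-equivariant limit in the twisted flatness relation of Proposition~\ref{p:flat}, namely $\partial_i L^{e_\lambda^{-1}(\cE^\vee)}(\bt,z) + \tfrac{1}{z}\bigl(T_i\bullet^{e_\lambda^{-1}(\cE^\vee)}_\bt\bigr)\circ L^{e_\lambda^{-1}(\cE^\vee)}(\bt,z) = 0$, shows that $L^{e^{-1}(\cE^\vee)}(\bt,z)$ is a fundamental solution for the Dubrovin connection $\partial_i + \tfrac{1}{z} T_i\bullet^{\cY\to\cX}_\bt$; the analogue on $\cY$, $\partial_i L^\cY + \tfrac{1}{z}\bigl(\pi^*T_i\bullet_\bt^\cY\bigr)\circ L^\cY = 0$, holds by Proposition~\ref{p:cvsreg} (after cancelling the $\bt$-independent invertible factor $z^{-\op{Gr}}z^{\rho(\cY)}$). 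Rewriting Proposition~\ref{p:noneq2} as the operator identity $L^\cY(\pi^*\bt,z) = \pi^*\circ L^{e^{-1}(\cE^\vee)}(\bt,z)\circ(\pi^*)^{-1}$, differentiating in $t^i$, substituting the two flatness relations, and cancelling the invertible operator $L^\cY(\pi^*\bt,z)$ on the right yields $\pi^*\circ\bigl(T_i\bullet^{\cY\to\cX}_\bt\bigr)\circ(\pi^*)^{-1} = \pi^*T_i\bullet_\bt^\cY$ for every basis element, hence $\pi^*(\alpha\bullet^{\cY\to\cX}_\bt\beta) = \pi^*\alpha\bullet_\bt^\cY\pi^*\beta$ for all $\alpha,\beta$ by linearity in the first argument. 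Since $\pi^*$ is bijective it is a ring isomorphism. The main obstacle is the middle step: verifying that the Euler-class factors coming from the twisted Poincar\'e pairing cancel exactly against those in \eqref{e:newex}, and that this is compatible with the Poincar\'e-duality and $I_*$ conventions implicit in the definitions of the two products; once that bookkeeping is in place the remainder is the standard ``fundamental solution determines the product'' argument.
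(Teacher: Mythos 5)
Your proof is correct, and the overall structure—first show that $\bullet^{\cY \to \cX}_\bt$ is the non-equivariant limit of $\bullet^{e_\lambda^{-1}(\cE^\vee)}_\bt$, then transport to $\cY$—matches the paper's. The first half is essentially identical: the change of dual basis $T_i = e_\lambda(\cE^\vee)\cup\overline T_i$, $T^i = \overline T^i$, the cancellation of the Euler class factor via \eqref{e:newex} and Remark~\ref{remaCR}, and the observation that what survives is $e_\lambda(\mathbb R^1\pi_* f^*(\cE)^\vee(-p_3))$ with a finite non-equivariant limit. Where you diverge is in the second half. The paper transports the limit product to $\cY$ directly: it pulls back the expression \eqref{e:equivprod} by $\pi^*$, uses the localization identities \eqref{e:equivinv} and \eqref{e:equivpair} to rewrite it in terms of equivariant invariants of $\cY$, and then takes the non-equivariant limit, recovering $\pi^*(\alpha)\bullet^\cY_{\pi^*(\bt)}\pi^*(\beta)$ via \eqref{e:noncompprod}. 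You instead invoke Proposition~\ref{p:noneq2} as an operator identity between fundamental solutions, differentiate the flatness equations, and cancel the invertible operator $L^\cY(\pi^*\bt,z)$; this is the standard ``fundamental solution determines the product'' argument. Both routes are valid and ultimately rest on the same localization facts (those facts are used to prove Proposition~\ref{p:noneq2}). The paper's route is slightly more elementary in that it stays at the level of correlators and does not need invertibility of $L^\cY$ or the chain-rule bookkeeping for the derivatives; your route is cleaner in that it packages the comparison into a single application of Proposition~\ref{p:noneq2}, at the cost of being less self-contained. One small caveat: your aside about multiplication by a pulled-back class being self-adjoint for the Chen--Ruan pairing is unnecessary here; the duality $\langle T_i, T^j\rangle^{e_\lambda^{-1}(\cE^\vee)} = \delta_i^j$ follows immediately from the definition of the $\bs$-twisted pairing, which absorbs the factor $e_\lambda^{-1}(\cE^\vee)$.
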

\begin{proof}
We first show that in the non-equivariant limit, the product $\bullet^{e_\lambda^{-1}(\cE^\vee)}_\bt$ specializes to 
 $\bullet^{\cY \to \cX}_\bt$.
 
The bases $\{\overline  T_i\}$ and  $\{\overline  T^i\}$ give dual bases with respect to the equivariant pairing on $H^*_{\op{CR}, T}(\cX)$ via the inclusion $H^*_{\op{CR}}(\cX)  \subset H^*_{\op{CR}, T}(\cX) \cong H^*_{\op{CR}}(\cX) \otimes \CC[\lambda]$. Define $ T_i := e_\lambda(\cE^\vee) \cup  \overline T_i$ and $  {T^i} := \overline  T^i$.  Note that with respect to the $e_\lambda^{-1}(\cE^\vee)$-twisted pairing, $\{ T_i \}$ and $\{ {T^i} \}$ are dual bases.  Therefore, for $\alpha, \beta \in H^*_{\op{CR}}(\cX)$, 
\begin{align}\label{e:equivprod}
\alpha \bullet_\bt^{e_\lambda^{-1}(\cE^\vee)} \beta &= 
\sum_{i \in I}\br{ \br{ \alpha, \beta, T_i  }}^{e^{-1}_\lambda(\cE^\vee)} {T^i } \\ \nonumber
& = \sum_{i \in I}\br{ \br{ \alpha, \beta, e_\lambda(\cE^\vee) \cup  \overline T_i  }}^{e_\lambda^{-1}(\cE^\vee)} {\overline T^i }.
\end{align}

By \eqref{e:newex} and Remark~\ref{remaCR}, the factor of $e_\lambda(\cE^\vee)$ in the third insertion cancels with part of the twisted virtual class and this expression becomes
\begin{align*} \sum_{i \in I}  \sum_{d \in \op{Eff}} \sum_{k \geq 0} \frac{ T^i}{k!} \int_{[\sMbar_{0,k+3}(\cc X, d)]^{vir}} &ev_1^*(\alpha) \cup ev_2^*(\beta) \cup ev_3^*(T_i)\cup \\ &\prod_{j=4}^{k+3} ev_j^*(\bt) \cup e_\lambda (\mathbb R^1 \pi_* f^*(\cE)^\vee(-p_3)).\end{align*}
In the nonequivariant limit $\lambda \mapsto 0$, this is exactly $\alpha \bullet^{\cY \to \cX}_\bt \beta$.

On the other hand, by~\eqref{e:equivinv} and~\eqref{e:equivpair}, 
if we pull back~\eqref{e:equivprod} by $\pi$ we obtain
\[
\sum_{i \in I} \br{ \br{ \pi^*(\alpha), \pi^*(\beta), \pi^*( T_i)  }}^{\cY_T} \pi^*(T^i),\]
which may be rewritten as
\[
\sum_{i \in I} \sum_{d \in \op{Eff}} \sum_{k \geq 0} \frac{1}{k!} I_* \circ{ev_3}_* \left( ev_1^*\pi^*(\alpha) \cup ev_2^*\pi^*(\beta)\cup \prod_{j=4}^{k+3} ev_j^*\pi^*(\bt) \cap \left[ \sMbar_{0,k+3}(\cY_T, d)\right]^{vir} \right),
\] where $\left[ \sMbar_{0,k+3}(\cY_T, d)\right]^{vir}$ is the equivariant virtual fundamental class.
In the non-equivariant limit we recover $\pi^*(\alpha) \bullet^{\cY}_{\pi^*(\bt)} \pi^*(\beta)$ by \eqref{e:noncompprod}.

Combining the above results we conclude
\begin{align*} \pi^*\left(\alpha \bullet^{\cY \to \cX}_\bt \beta\right) &=  \pi^*\left(\lim_{\lambda \mapsto 0} \alpha \bullet_\bt^{e_\lambda^{-1}(\cE^\vee)} \beta\right) \\
&= \lim_{\lambda \mapsto 0} \pi^*\left(\alpha \bullet_\bt^{e_\lambda^{-1}(\cE^\vee)} \beta\right) \\
&=  \lim_{\lambda \mapsto 0} \left( \pi^*(\alpha) \bullet^{\cY_T}_{\pi^*(\bt)} \pi^*(\beta) \right) \\
&=\pi^*(\alpha) \bullet^{\cY}_{\pi^*(\bt)} \pi^*(\beta).\end{align*}
%
\end{proof}

\section{Quantum Serre duality}\label{s:qsd}

In this section we use the definition of the compactly supported quantum connection and the narrow quantum $D$-module to reframe quantum Serre duality in two new ways.  First we relate the compactly supported quantum connection of $\cY$ to the quantum connection of $\cZ$.  Second, we show there is an \emph{isomorphism} between the narrow quantum $D$-module of $\cY$ and the ambient quantum $D$-module of $\cZ$.  In both cases we show these correspondences to be compatible with the integral structures.

In all of this section we assume: 
\begin{itemize}
\item The vector bundle $\cE \to \cX$ is convex;
\item Assumption~\ref{a:ass2};
\item Assumption~\ref{as2};
\item The stack $\cX$ has the resolution property.
\end{itemize}

\subsection{Compactly supported quantum Serre duality}

In \cite{IMM}, it is observed (Remark~3.17) that the $e(\cE)$-twisted quantum $D$-module can be viewed as the quantum $D$-module with compact support of the total space $\cE^\vee$.  We make this observation precise by relating the Euler twisted fundamental solution $L^{e(\cE)}(\bt, z)$ with the compactly supported fundamental solution $L^{\cY, \op{c}}(\bt, z)$.  This 
then allows us to directly relate the compactly supported fundamental solution of $\cY$ with the ambient fundamental solution $L^\cZ(\bt, z)$, obtaining a new perspective on quantum Serre duality.  In Remark~\ref{r:IMMr} we explain that the results of this section should be viewed as adjoint to a similar theorem in \cite{IMM}.  The majority of the techniques of this section appeared already in \cite{IMM}, it is mainly the perspective which is new.  This particular formulation of quantum Serre duality, described in Theorem~\ref{t:dcom}, is convenient for then proving the more refined statement in \S~\ref{s:narqsd}.

Recall the definition of $P^\cX$ given in Notation~\ref{n:P}.
Consider the map $\hat f^\lambda(\bt): H^*_{\op{CR}, T}(\cX) \to H^*_{\op{CR}, T}(\cX) \otimes P^\cX[[\lambda]]$ given by
\begin{equation}\label{e:covf} \hat f^\lambda(\bt) := \sum_{i \in I} \br{\br{ e_\lambda(\cE^\vee), T_i}}^{e_\lambda(\cE^\vee)} T^i\end{equation}
where $\{T_i\}$ and $\{T^i\}$ as dual bases with respect to the $e_\lambda^{-1}(\cE^\vee)$-pairing. 

\begin{prop}\label{p:noncov}
The map $\hat f^\lambda(\bt)/e_\lambda(\cE^\vee)$ has a well defined non-equivariant limit $\hat f^\cX(\bt)$ given by
\[ 
\sum_{i \in I} \br{\br{ \widetilde 1 , \overline T_i}}^{e^{-1}(\cE^\vee)} \overline T^i,\]
where $\{\overline T_i\}$ and $\{\overline T^i\}$ as dual bases with respect to the usual pairing on $H^*_{\op{CR}}(\cX)$.
\end{prop}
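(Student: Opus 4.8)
The plan is to reduce the statement to a single limit of twisted two--point functions, which can then be evaluated using the rewriting \eqref{e:newex}. Since $\{T_i\}$ and $\{T^i\}$ are dual with respect to the $e_\lambda^{-1}(\cE^\vee)$--twisted pairing, the class $\hat f^\lambda(\bt)$ of \eqref{e:covf} is the unique one satisfying $\langle \hat f^\lambda(\bt), \gamma \rangle^{e_\lambda^{-1}(\cE^\vee)} = \br{\br{ e_\lambda(\cE^\vee), \gamma }}^{e_\lambda^{-1}(\cE^\vee)}$ for all $\gamma$. By the definition of the twisted pairing, $\langle \alpha, \gamma \rangle^{e_\lambda^{-1}(\cE^\vee)} = \langle e_\lambda(\cE^\vee)^{-1}\cup\alpha, \gamma \rangle^{\cX}$, so this says precisely that $\hat f^\lambda(\bt)/e_\lambda(\cE^\vee)$ is the class with $\langle \hat f^\lambda(\bt)/e_\lambda(\cE^\vee), \gamma \rangle^{\cX} = \br{\br{ e_\lambda(\cE^\vee), \gamma }}^{e_\lambda^{-1}(\cE^\vee)}$ for all $\gamma$; equivalently, writing $\{\overline T_i\}$ for a basis of $H^*_{\op{CR}}(\cX)$ and $\{\overline T^i\}$ for its $\langle -,- \rangle^\cX$--dual basis,
\[ \hat f^\lambda(\bt)/e_\lambda(\cE^\vee) = \sum_{i\in I} \br{\br{ e_\lambda(\cE^\vee),\, \overline T_i }}^{e_\lambda^{-1}(\cE^\vee)}\, \overline T^i . \]

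Next I would evaluate $\br{\br{ e_\lambda(\cE^\vee), \overline T_i }}^{e_\lambda^{-1}(\cE^\vee)}$ term by term over $d \in \op{Eff}$ and $k \geq 0$, applying \eqref{e:newex} together with Remark~\ref{remaCR} with the distinguished marked point taken to be the first one --- the slot carrying $e_\lambda(\cE^\vee)$. Exactly as in the proof of Proposition~\ref{p:ringiso}, the factor $e_\lambda(\cE^\vee)$ at the first marked point cancels the denominator $ev_1^*(q^*(e_\lambda(\cE^\vee)))$ produced by \eqref{e:newex}, leaving
\[ \br{\br{ e_\lambda(\cE^\vee), \overline T_i }}^{e_\lambda^{-1}(\cE^\vee)} = \sum_{d\in\op{Eff}}\sum_{k\geq 0}\frac{1}{k!}\int_{[\sMbar_{0,k+2}(\cX,d)]^{vir}} ev_1^*(1) \cup ev_2^*(\overline T_i) \cup \prod_{j=3}^{k+2} ev_j^*(\bt) \cup e_\lambda(\mathbb R^1 \pi_* f^*(\cE)^\vee(-p_1)). \]
By Lemma~\ref{l:hnot} and the distinguished triangle preceding \eqref{e:newex} --- which also gives $\mathbb R^0 \pi_* f^*(\cE)^\vee(-p_1) = 0$ --- the sheaf $\mathbb R^1 \pi_* f^*(\cE)^\vee(-p_1)$ is an honest vector bundle, so its $T$--equivariant Euler class is a polynomial in $\lambda$ whose value at $\lambda = 0$ is the ordinary Euler class $e(\mathbb R^1 \pi_* f^*(\cE)^\vee(-p_1))$. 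Hence the displayed expression, and therefore $\hat f^\lambda(\bt)/e_\lambda(\cE^\vee)$, is polynomial in $\lambda$, so its non--equivariant limit exists.

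Finally, setting $\lambda = 0$ replaces $e_\lambda$ by $e$ termwise; by the definition of the tilde--decorated invariants in \S~\ref{ss:ts} the resulting sum is exactly $\br{\br{ \widetilde 1, \overline T_i }}^{e^{-1}(\cE^\vee)}$, with the tilde on the first slot (which carries the insertion $1$). Therefore
\[ \lim_{\lambda \to 0} \hat f^\lambda(\bt)/e_\lambda(\cE^\vee) = \sum_{i\in I} \br{\br{ \widetilde 1, \overline T_i }}^{e^{-1}(\cE^\vee)}\, \overline T^i = \hat f^\cX(\bt), \]
which is the asserted formula.

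I expect the only genuine work to be in the middle step: verifying via \eqref{e:newex} and Remark~\ref{remaCR} that the insertion $e_\lambda(\cE^\vee)$ cancels the appropriate part of the twisted virtual class, and invoking the convexity hypothesis --- through Remark~\ref{r:conv} and Lemma~\ref{l:hnot} --- to conclude that $\mathbb R^1 \pi_* f^*(\cE)^\vee(-p_1)$ is a vector bundle, so that its equivariant Euler class has no pole in $\lambda$ and the non--equivariant limit is well defined. Everything else is formal manipulation of the twisted pairing.
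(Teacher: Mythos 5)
Your proposal is correct and follows the same route as the paper. The paper's proof is just two sentences: first it makes exactly the change of basis you make (trading the $e_\lambda^{-1}(\cE^\vee)$-dual basis $\{T^i\}$ for the ordinary dual basis $\{\overline T^i\}$, which introduces the factor $1/e_\lambda(\cE^\vee)$), and then it says the limit ``follows by the same argument as in Proposition~\ref{p:ringiso}.'' Your middle step — using \eqref{e:newex} and Remark~\ref{remaCR} to cancel the $e_\lambda(\cE^\vee)$ insertion against the denominator, then invoking Lemma~\ref{l:hnot} to see that $\mathbb R^1\pi_*f^*(\cE)^\vee(-p_1)$ is an honest bundle so its equivariant Euler class has no pole in $\lambda$ — is precisely the content of that argument, spelled out rather than cited. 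The only minor imprecision is attributing the vanishing $\mathbb R^0\pi_*f^*(\cE)^\vee(-p_1)=0$ partly to the distinguished triangle; this vanishing is just Lemma~\ref{l:hnot} read fiberwise, and the triangle is then used to derive \eqref{e:newex}. This does not affect the validity of the argument.
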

\begin{proof}
Due to the difference in the usual pairing on $H^*_{\op{CR}, T}(\cX)$ and the $e_\lambda^{-1}(\cE^\vee)$-twisted pairing, $\hat f^\lambda(\bt)/e_\lambda(\cE^\vee)$ can be expressed as 
$$\sum_{i \in I} \br{\br{ e_\lambda(\cE^\vee)  , \overline T_i}}^{e_\lambda^{-1}(\cE^\vee)} \overline T^i.$$
The claim then follows by the same argument as in Proposition~\ref{p:ringiso}. 
\end{proof}

\begin{prop}\label{p:Lcomc}
Let $\bar f^\cX(\bt) = \hat f^\cX(\bt) - \pi \sqrt{-1} c_1(\cE)$.
The isomorphism 
\[i^{\op{c}}_*: H^*_{\op{CR}}(\cX) \to H^*_{\op{CR}, \op{c}}(\cY)\]
identifies the connections $(\bar f^\cX \circ i^*)^*\left(\nabla^{e(\cE)}\right)$ and $\nabla^{\cY, \op{c}}$.
Furthermore,
\begin{equation}\label{e:Lcs}L^{\cY, \op{c}}(\bt, z) i^{\op{c}}_*(\beta) = i^{\op{c}}_* \left(  L^{e(\cE)}(\bar f^\cX( i^*(\bt)), z) e^{- \pi \sqrt{-1} c_1(\cE)/z} \beta \right)
\end{equation}
for all $\beta \in H^*_{\op{CR}}(\cX)$.
\end{prop}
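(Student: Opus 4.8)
The plan is to establish the fundamental-solution identity \eqref{e:Lcs} first and then read off the identification of connections from it. To prove \eqref{e:Lcs} I would reduce it, via the ``duality'' relation \eqref{e:Lpair} between $L^\cY$ and $L^{\cY,\op{c}}$, to a statement involving only the (non-equivariant limits of the) twisted theories on $\cX$, i.e.\ a form of the classical quantum Serre duality of \cite{CG}. Concretely: since $i^{\op{c}}_*\colon H^*_{\op{CR}}(\cX)\to H^*_{\op{CR},\op{c}}(\cY)$ is an isomorphism (Proposition~\ref{p:tot}), it suffices to identify $L^{\cY,\op{c}}(\bt,z) i^{\op{c}}_*(\beta)$, and by \eqref{e:Lpair} this class is the unique element $\gamma$ of $H^*_{\op{CR},\op{c}}(\cY)\otimes P^\cY[z,z^{-1}]$ with $\langle L^\cY(\bt,-z)\alpha,\gamma\rangle^\cY = \langle\alpha, i^{\op{c}}_*\beta\rangle^\cY$ for every $\alpha\in H^*_{\op{CR}}(\cY)$ --- uniqueness holding because $L^\cY(\bt,-z)$ is invertible over $P^\cY[z,z^{-1}]$ and $\langle-,-\rangle^\cY$ is nondegenerate. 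Hence the task is to verify, for all such $\alpha$, that
\[ \big\langle L^\cY(\bt,-z)\alpha,\ i^{\op{c}}_*\!\left(L^{e(\cE)}(\bar f^\cX(i^*\bt),z)\,e^{-\pi\sqrt{-1}\,c_1(\cE)/z}\beta\right)\!\big\rangle^\cY = \langle\alpha,\ i^{\op{c}}_*\beta\rangle^\cY. \]

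To do this I would write $\alpha=\pi^*\alpha_0$ (legitimate since $\pi^*$ is an isomorphism; automatically $\bt=\pi^*(i^*\bt)$ as well), apply Proposition~\ref{p:noneq2} to the first argument, and use the projection-formula identity $\langle\pi^*a,\, i^{\op{c}}_*b\rangle^\cY = \langle a,b\rangle^\cX$ --- which follows from $i^*\pi^*=\op{id}$, the compatibility of the zero section with the involution $I$ on inertia stacks, and the fact that $i^{\op{c}}_*$ raises degree by $2\op{rk}(\cE)$ so the dimensions match. This reduces the displayed identity to
\[ \big\langle L^{e^{-1}(\cE^\vee)}(\bar\bt,-z)\alpha_0,\ L^{e(\cE)}(\bar f^\cX(\bar\bt),z)\,e^{-\pi\sqrt{-1}\,c_1(\cE)/z}\beta\big\rangle^\cX = \langle\alpha_0,\beta\rangle^\cX, \]
to be shown for all $\alpha_0,\beta\in H^*_{\op{CR}}(\cX)$ and, since $i^*$ is an isomorphism, for all $\bar\bt\in H^*_{\op{CR}}(\cX)$.

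This last identity is the non-equivariant limit of the equivariant twisted quantum Serre duality of \cite{CG}: equivariantly, the fundamental solutions $L^{e_\lambda(\cE)}$ and $L^{e_\lambda^{-1}(\cE^\vee)}$ are intertwined by an explicit linear symplectomorphism, which, fed into the self-adjointness relation \eqref{e:invadj} for each twisted theory and combined with the discrepancy between the twisted pairings $\langle e_\lambda(\cE)\cup-,-\rangle^\cX$ and $\langle e_\lambda^{-1}(\cE^\vee)\cup-,-\rangle^\cX$, yields the equivariant precursor with change of variables $\hat f^\lambda$. One then takes $\lambda\mapsto 0$: the limits exist by Propositions~\ref{p:noneq1} and~\ref{p:noneq2}, the normalized change of variables $\hat f^\lambda(\bt)/e_\lambda(\cE^\vee)$ converges to $\hat f^\cX(\bt)$ by Proposition~\ref{p:noncov}, and the branches of logarithm in the specializations \eqref{e:specCI} and \eqref{e:spec2}, together with the $z$-direction (Euler-field) parts of the two connections, are what produce in the limit the shift $\bar f^\cX = \hat f^\cX - \pi\sqrt{-1}\,c_1(\cE)$ and the operator $e^{-\pi\sqrt{-1}\,c_1(\cE)/z}$. (Alternatively, the required identity can be extracted from the proof of \cite[Theorem~3.13]{IMM}.) I expect this last step --- carrying out the non-equivariant limit carefully, checking finiteness, and pinning down the exact $\pi\sqrt{-1}\,c_1(\cE)$-contributions --- to be the main obstacle.

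For the identification of connections: by Proposition~\ref{p:cvsreg}, $L^{\cY,\op{c}}(\bt,z)\,z^{-\op{Gr}}z^{\rho(\cY)}$ is a fundamental solution of $\nabla^{\cY,\op{c}}$, and the non-equivariant limit of Proposition~\ref{p:flat} shows the analogous dressing of $L^{e(\cE)}$ is a fundamental solution of $\nabla^{e(\cE)}$, whose pullback along $\bt\mapsto\bar f^\cX(i^*\bt)$ solves $(\bar f^\cX\circ i^*)^*\nabla^{e(\cE)}$. Identity \eqref{e:Lcs} then says $i^{\op{c}}_*$ carries one fundamental solution to the other, provided one commutes $i^{\op{c}}_*$ past the dressing operators; this uses $\rho(\cY) = \pi^*\!\left(\rho(\cX) - c_1(\cE)\right)$ (from $T\cY\cong\pi^*T\cX\oplus\pi^*\cE^\vee$), the projection formula $i^{\op{c}}_*(c\cup-) = \pi^*c\cup i^{\op{c}}_*(-)$, and the grading shift of $i^{\op{c}}_*$ by $\op{rk}(\cE)$, which together reconcile $z^{-\op{Gr}}z^{\rho(\cX)}e^{-\pi\sqrt{-1}c_1(\cE)/z}$ with $z^{-\op{Gr}}z^{\rho(\cY)}$ up to an overall scalar. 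Since a Dubrovin-type connection is determined by any one of its fundamental solutions, this yields the stated identification.
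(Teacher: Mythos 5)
Your overall strategy matches the paper's: characterize $L^{\cY,\op{c}}(\bt,z)\,i^{\op{c}}_*\beta$ uniquely via the pairing relation \eqref{e:Lpair} against $L^\cY(\bt,-z)$, use Proposition~\ref{p:noneq2} and the projection formula $\br{\pi^*a, i^{\op{c}}_*b}^\cY = \br{a,b}^\cX$ to push everything down to $\cX$, and then match the resulting expression with the non-equivariant limit of an equivariant twisted quantum Serre duality statement. This is exactly the paper's chain of equalities, read in reverse.

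The gap you flag as ``the main obstacle'' is indeed where the paper's real work happens, and it is not filled in by your sketch. The precise identity you need on $\cX$ --- in the paper this is
\begin{equation*}
L^{e_\lambda^{-1}(\cE^\vee)}(\bt, z)\bigl(e_\lambda(\cE^\vee)\cup\alpha\bigr) = e_\lambda(\cE^\vee)\cup L^{e_\lambda(\cE)}(\bar f^\lambda(\bt), z)\,e^{-\pi\sqrt{-1}\,c_1(\cE)/z}\alpha
\end{equation*}
(their \eqref{e:deltaJtwisted2}) --- is obtained by a concrete $J$-function computation: the symplectomorphism $\Delta^\diamond = e^{\pi\sqrt{-1}c_1(\cE)/z}/e_\lambda(\cE^\vee)$ of Theorem~\ref{t:QSDlag} carries the cone $\sL^{e_\lambda^{-1}(\cE^\vee)}$ to $\sL^{e_\lambda(\cE)}$; one applies $\Delta^\diamond$ to the derivative $z\partial_{-e_\lambda(\cE^\vee)}J^{e_\lambda^{-1}(\cE^\vee)}(\bt,-z)$, reads off the $z^1$ and $z^0$ coefficients to find the landing point $\bar f^\lambda(\bt) = \hat f^\lambda(\bt)/e_\lambda(\cE^\vee) - \pi\sqrt{-1}c_1(\cE)$ (this is where both the $\hat f$-change-of-variables and the $-\pi\sqrt{-1}c_1(\cE)$ shift come from simultaneously), and then compares tangent spaces via \eqref{e:Jgens}--\eqref{e:Jgens2} and \eqref{e:LJ} to get the operator identity. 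Your attribution of the $\pi\sqrt{-1}c_1(\cE)$ terms to ``the $z$-direction (Euler-field) parts of the two connections'' is off: the twisted connections have no $z$-direction, and these terms arise entirely from $\Delta^\diamond$ itself (ultimately from the branch choice $s_0' = -\ln(-\lambda)$ versus $s_0 = \ln(\lambda)$). Similarly, the symplectomorphism does not directly intertwine the fundamental solutions; the base-point change encoded in $\bar f^\lambda$ is an essential part of the statement, and without carrying out the $J$-function bookkeeping you cannot pin it down. So: right approach, but the central computation remains to be done, and your heuristic for where the $\pi\sqrt{-1}c_1(\cE)$-contributions come from would lead you astray.

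Your argument for the connection identification in the last paragraph is along the right lines but slightly over-engineered: since $\nabla^{e(\cE)}$ and $\nabla^{\cY,\op{c}}$ only have $\nabla_i$ components (there is no $z$-direction in the twisted case), identity \eqref{e:Lcs} together with the fact that $e^{-\pi\sqrt{-1}c_1(\cE)/z}$ is $\bt$-independent immediately says $i^{\op{c}}_*$ carries $(\bar f^\cX\circ i^*)^*$-flat sections to $\nabla^{\cY,\op{c}}$-flat sections, which determines the connections; the reconciliation of $z^{-\op{Gr}}z^{\rho(\cdot)}$ dressings you describe is not needed for this proposition.
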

\begin{proof}
The first claim follows from the second.  
By Theorem~\ref{t:QSDlag}, the symplectic transformation 
\[\Delta^\diamond := 
 e^{ \pi \sqrt{-1} c_1(\cE)/z}/e_\lambda(\cE^\vee)\]
maps $\sL^{e_{\lambda}^{-1}(\cE^\vee)}$ to $\sL^{e_\lambda(\cE)}$.

By~\eqref{e:genfam} and~\eqref{e:Jgens}, $\Delta^\diamond z \partial_{-e_\lambda(\cE^\vee)} J^{e_{\lambda}^{-1}(\cE^\vee)}(\bt, -z)$ is a $\CC[z]$-linear combination of derivatives of $J^{e_\lambda(\cE)}(\hat \bt, -z)$ at some point $\hat \bt$.  
Observe that 
\begin{align*}
&\Delta^\diamond z \partial_{-e(\cE^\vee)} J^{e_{\lambda}^{-1}(\cE^\vee)}(\bt, -z) \\
=& \frac{1}{e_\lambda(\cE^\vee)} \left(1 + \pi \sqrt{-1} c_1(\cE)/z \right)\left( -e_\lambda(\cE^\vee) z + \sum_{i \in I} \br{\br{e_\lambda(\cE^\vee), T_i}}^{e_{\lambda}^{-1}(\cE^\vee)}T^i \right) + \cc O(1/z) \\ 
=& -z + \frac{1}{e_\lambda(\cE^\vee)} \left( \sum_{i \in I} \br{\br{e_\lambda(\cE^\vee), T_i}}^{e_{\lambda}^{-1}(\cE^\vee)}T^i \right) - \pi \sqrt{-1} c_1(\cE) + \cc O(1/z).
\end{align*}
From this we see that $\Delta^\diamond z \partial_{-e(\cE^\vee)} J^{e_{\lambda}^{-1}(\cE^\vee)}(\bt, -z)$ is equal to $J^{e_\lambda(\cE)}(\bar f^\lambda(\bt), -z)$ where 
\begin{align*}\bar f^\lambda(\bt) &=  \frac{1
}{e_\lambda(\cE^\vee)} \left( \sum_{i \in I} \br{\br{e_\lambda(\cE^\vee), T_i}}^{e_{\lambda}^{-1}(\cE^\vee)}T^i \right) - \pi \sqrt{-1} c_1(\cE)\\
&= \hat f^\lambda(\bt)/e_\lambda(\cE^\vee) - \pi \sqrt{-1} c_1(\cE).
\end{align*}
This implies that \[\Delta^\diamond (T_{J^{e_{\lambda}^{-1}(\cE^\vee)}(\bt, -z)} \sL^{e_{\lambda}^{-1}(\cE^\vee)}) = T_{J^{e_\lambda(\cE)}(\bar f^\lambda(\bt), -z)} \sL^{e_\lambda(\cE)}
\]
and so by \eqref{e:Jgens2}, $\Delta_+( \partial_{\alpha}J^{e_{\lambda}^{-1}(\cE^\vee)}(\bt, -z))$ is a $\CC[z]$-linear combination of derivatives of $J^{e_\lambda(\cE)}(\bar \bt, -z)$ evaluated at $\bar \bt = \bar f^\lambda(\bt)$. Comparing $z^0$-coefficients, we see
\begin{equation*}\frac{e^{- \pi \sqrt{-1} c_1(\cE)/z}}{ e_\lambda(\cE^\vee)} \partial_{e_\lambda(\cE^\vee)\cup\alpha} J^{e_{\lambda}^{-1}(\cE^\vee)}(\bt, z) =
\partial_{\alpha} J^{e_\lambda(\cE)}(\bar \bt, z)|_{\hat \bt = \hat f(\bt)},\end{equation*}  where we have replaced $-z$ by $z$.  By \eqref{e:LJ}, this equation can be written as
\begin{equation*}
\frac{e^{- \pi \sqrt{-1} c_1(\cE)/z}}{ e_\lambda(\cE^\vee)} \left(L^{e_\lambda^{-1}(\cE^\vee)}(\bt, z)^{-1} e_\lambda(\cE^\vee)\cup \alpha \right)=
L^{e_\lambda(\cE)}(\bar \bt, z)^{-1} \alpha \end{equation*} or, equivalently, 
\begin{equation}\label{e:deltaJtwisted2}
L^{e_\lambda^{-1}(\cE^\vee)}(\bt, z) (e_\lambda(\cE^\vee)\cup \alpha) =
 e_\lambda(\cE^\vee) \cup L^{e_\lambda(\cE)}(\bar \bt, z)e^{- \pi \sqrt{-1} c_1(\cE)/z} \alpha. \end{equation}

By Proposition~\ref{p:noncov}, the non-equivariant limit of $\hat f^\lambda(\bt)/e_\lambda(\cE^\vee)$ exists.
Then by Proposition~\ref{p:noneq1} the right side therefore has a non-equivariant limit  for $\alpha \in H^*_{\op{CR}, T}(\cX) \subset H^*_{\op{CR}, T}(\cX) \otimes_{R_T} S_T$.

To finish the proof, let $\alpha, \beta \in H^*_{\op{CR}}(\cX)$ and consider the following:
\begin{align*}
\br{L^\cY(\bt, -z) \pi^*(\alpha), L^{\cY, \op{c}}(\bt, z) i^{\op{c}}_* \beta}^\cY & = \br{ \pi^*(\alpha),  i^{\op{c}}_* \beta}^\cY  \\
& = \br{\alpha,\beta}^\cX 
\end{align*}
where the first equality is \eqref{e:Lpair} and the second is the projection formula \cite{BT}. Note that this equation completely determines $L^{\cY, \op{c}}(\bt, z)$ in terms of $L^{\cY}(\bt, z)$.
On the other hand, we have
\begin{align*}
&\br{\alpha,\beta}^\cX \\
= &  \lim_{\lambda \mapsto 0} \br{\alpha, \beta}^{\cX} \\
= &  \lim_{\lambda \mapsto 0} \br{ \alpha, e_\lambda(\cE^\vee) \cup \beta}^{e_{\lambda}^{-1}(\cE^\vee)} \\
= &  \lim_{\lambda \mapsto 0} \br{ L^{e_\lambda^{-1}(\cE^\vee)}(i^*(\bt), -z)  \alpha, L^{e_\lambda^{-1}(\cE^\vee)}(i^*(\bt), z) (e_\lambda(\cE^\vee) \cup \beta)}^{e_{\lambda}^{-1}(\cE^\vee)} \\
= &  \lim_{\lambda \mapsto 0} \br{ L^{e_\lambda^{-1}(\cE^\vee)}(i^*(\bt), -z)  \alpha, e_\lambda(\cE^\vee) \cup L^{e_\lambda(\cE)}(\bar f^{\lambda}(i^*(\bt)), z)e^{- \pi \sqrt{-1} c_1(\cE)/z} \beta}^{e_{\lambda}^{-1}(\cE^\vee)} \\
= &  \lim_{\lambda \mapsto 0} \br{ L^{e_\lambda^{-1}(\cE^\vee)}(i^*(\bt), -z)  \alpha, L^{e_\lambda(\cE)}(\bar f^{\lambda}(i^*(\bt)), z)e^{- \pi \sqrt{-1} c_1(\cE)/z} \beta}^{\cX} \\
= &   \br{ L^{e^{-1}(\cE^\vee)}(i^*(\bt), -z)  \alpha, L^{e(\cE)}(\bar f^\cX(i^*(\bt)), z)e^{- \pi \sqrt{-1} c_1(\cE)/z} \beta}^{\cX} \\
= &   \br{ \pi^*\left( L^{e^{-1}(\cE^\vee)}(i^*(\bt), -z)  \alpha \right), i^{\op{c}}_* \left( L^{e(\cE)}(\bar f^\cX(i^*(\bt)), z)e^{- \pi \sqrt{-1} c_1(\cE)/z} \beta \right)}^{\cY} \\
= &   \br{ L^{\cY}(\bt, -z)  \pi^*(\alpha), i^{\op{c}}_* \left( L^{e(\cE)}(\bar f^\cX(i^*(\bt)), z)e^{- \pi \sqrt{-1} c_1(\cE)/z} \beta \right)}^{\cY}
\end{align*}
The third equality is \eqref{e:invadj}, the fourth is \eqref{e:deltaJtwisted2}, the fifth is from the difference in the twisted and untwisted pairing, the seventh is again the projection formula and the last is Proposition~\ref{p:noneq2}.

Combining the above two chains of equalities yields \eqref{e:Lcs}.

\end{proof}

\begin{defi}
Define \[\Delta^{\op{c}}_+:= j^* \circ \pi^{\op{c}}_*: H^*_{\op{CR}, \op{c}}(\cY) \to H^*_{\op{CR}, \op{amb}}(\cZ).\]  Define 
\[\bar \Delta^{\op{c}}_+ := (2 \pi \sqrt{-1} z)^{\op{rk}(\cE)} \Delta^{\op{c}}_+.\]
\end{defi}

We will show that $\bar \Delta_+^{\op{c}}$  is compatible with the quantum connections, integral structures and the functor 
\[j^* \circ \pi_*: D(\cY)_{\cX} \to D(\cZ).\]
\begin{lemma}\label{l:derind}  Assume that $\cX$ has the resolution property.
Consider the functor $j^* \circ \pi_*: D(\cY)_{\cX} \to D(\cZ)$.  The induced map on cohomology from $H^*_{\op{CR}, \op{c}}(\cY)$ to $H^*_{\op{CR}, \op{amb}}(\cZ)$  is given by $\Delta_+^{\op{c}} (- \cup\op{Td}(  \pi^*\cE^\vee ))$,
i.e.
\begin{equation}\label{e:chcomm} \Delta_+^{\op{c}} \left( \ch^{\op{c}} (F) \cup\op{Td}(  \pi^*\cE^\vee ) \right)= \ch \circ j^* \circ \pi_*(F)\end{equation}
for all $F \in D(\cY)_{\cX}$, where  $\ch^{\op{c}}$ is defined in Definition~\ref{d:cch}.
\end{lemma}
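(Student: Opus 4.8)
The plan is to prove \eqref{e:chcomm} by d\'evissage, reducing to objects pushed forward from the zero section $i\colon\cX\hookrightarrow\cY$ of $\pi$, and then applying Grothendieck--Riemann--Roch for $i$. First I would note that $D(\cY)_{\cX}$ is generated, as a triangulated category, by the essential image of $i_*\colon D(\cX)\to D(\cY)_{\cX}$: any coherent sheaf on $\cY$ whose support lies in $\cX$ carries a finite filtration, by powers of the ideal sheaf of $\cX$, with subquotients of the form $i_*\mathcal G$, $\mathcal G$ coherent on $\cX$; and the resolution property of $\cX$ lets one represent objects of $D(\cX)$ by bounded complexes of vector bundles. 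Since $\ch^{\op{c}}$, the functor $j^*\circ\pi_*$, cup product with $\op{Td}(\pi^*\cE^\vee)$, and $\Delta^{\op{c}}_+=j^*\circ\pi^{\op{c}}_*$ are all additive on distinguished triangles, it suffices to check \eqref{e:chcomm} for $F=i_*G$ with $G\in D(\cX)$. (One uses here that, for $F\in D(\cY)_{\cX}$, the cohomology sheaves of $F$ are supported on the proper substack $\cX$, so $R\pi_*F$ is a bounded coherent complex and $j^*\circ\pi_*$ is a well-defined exact functor landing in $j^*(D(\cX))$.)

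Then I would invoke the following input. The zero section $i$ is a proper regular closed immersion of codimension $\op{rk}(\cE)$ with normal bundle $N_{\cX/\cY}=i^*\pi^*\cE^\vee=\cE^\vee$; since $\cE$ is convex it is pulled back from the coarse moduli space (Remark~\ref{r:conv}), so the isotropy groups of $\cX$ act trivially on $\cE^\vee$, the age of $\cE^\vee$ along every twisted sector of $I\cX$ vanishes, and the orbifold and ordinary Todd classes of $\cE^\vee$ agree there. Hence the compactly supported orbifold Grothendieck--Riemann--Roch theorem for $i$ (built into Definition~\ref{d:cch}) gives $\ch^{\op{c}}(i_*G)=i^{\op{c}}_*\bigl(\ch(G)\cup\op{Td}(\cE^\vee)^{-1}\bigr)$. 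Combining this with the projection formula for $i^{\op{c}}_*$, with $i^*\op{Td}(\pi^*\cE^\vee)=\op{Td}(\cE^\vee)$, and with $\pi^{\op{c}}_*\circ i^{\op{c}}_*=\op{id}$ (valid because $i$ is a section of $\pi$), the left-hand side of \eqref{e:chcomm} unwinds:
\begin{align*}
\Delta^{\op{c}}_+\bigl(\ch^{\op{c}}(i_*G)\cup\op{Td}(\pi^*\cE^\vee)\bigr)
&=j^*\,\pi^{\op{c}}_*\, i^{\op{c}}_*\bigl(\ch(G)\cup\op{Td}(\cE^\vee)^{-1}\cup i^*\op{Td}(\pi^*\cE^\vee)\bigr)\\
&=j^*\,\pi^{\op{c}}_*\, i^{\op{c}}_*\,\ch(G)\;=\;j^*\ch(G)\;=\;\ch(j^*G).
\end{align*}
On the other hand $R\pi_*\circ Ri_*=R(\pi\circ i)_*=\op{id}_{D(\cX)}$, so $j^*\circ\pi_*(i_*G)=j^*G$ and therefore $\ch\circ j^*\circ\pi_*(i_*G)=\ch(j^*G)$, which matches. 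This gives \eqref{e:chcomm} for $F=i_*G$, hence for all $F\in D(\cY)_{\cX}$; the fact that $\Delta^{\op{c}}_+(-\cup\op{Td}(\pi^*\cE^\vee))$ is \emph{the} map induced on cohomology then follows because the image of $\ch^{\op{c}}$ spans $H^*_{\op{CR},\op{c}}(\cY)$ by Assumption~\ref{as:nc}.

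The step I expect to be the main obstacle is the Grothendieck--Riemann--Roch statement for the zero section with the correct Todd normalization --- in particular verifying that the orbifold (age) corrections to the Todd class of $\cE^\vee$ really do vanish, which is exactly where convexity of $\cE$ enters --- together with checking that the d\'evissage reduction is legitimate, i.e.\ that the functor $j^*\circ\pi_*$ is exact on $D(\cY)_{\cX}$ and that all the cohomological operations respect triangles. Everything else is the formal bookkeeping carried out above.
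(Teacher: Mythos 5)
Your proof is correct and follows essentially the same strategy as the paper's: reduce by generation of $D(\cY)_{\cX}$ by $i_*(D(\cX))$ to the case $F=i_*G$, then apply orbifold Grothendieck--Riemann--Roch to the zero section and unwind with the projection formula. Where the paper simply cites \cite{BFK2} (Lemmas 4.6, 4.8) for the generation statement and \cite{Ts,To} for GRR, you instead supply a d\'evissage argument via powers of the ideal sheaf and explicitly note that the age corrections to $\op{Td}(\cE^\vee)$ vanish because convexity forces $\cE$ to be pulled back from the coarse space --- helpful elaborations, but not a different route.
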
 
\begin{proof}
By, Lemmas 4.6 and 4.8 of \cite{BFK2}, $D(\cY)_{\cX}$ is strongly generated by $i_*\left(D(\cX)\right)$.
Thus any element of $D(\cY)_{\cX}$ may be expressed, via a finite sequence of extensions, in terms of elements in $i_*\left(D(\cX)\right)$.
It therefore suffices to check the statement when $F = i_*(G)$ for some $G \in D(\cX)$.  By orbifold Grothendieck--Riemann--Roch \cite{Ts, To}, 
$ \ch^{\op{c}}(i_*(G)) \cup\op{Td}(  \pi^*\cE^\vee ) = i_*(\ch(G)).$  Then,
\begin{align*}
\Delta_+^{\op{c}}( \ch^{\op{c}}(i_*(G)) \cup\op{Td}(  \pi^*\cE^\vee ) ) & = \Delta_+^{\op{c}}(i_* (\ch(G))) \\
& = j^*(\ch(G)) \\
& = j^*(\ch(\pi_* \circ i_* G)) \\
&= \ch(j^*\circ \pi_* \circ i_* G)).
\end{align*}
\end{proof}

\begin{theo}\label{t:dcom}
$\bar \Delta^{\op{c}}_+$ maps $(\cY, c)$-flat sections to $\cZ$-flat sections.  In particular,
\begin{equation}\label{e:comc}
\bar \Delta^{\op{c}}_+ \circ L^{\cY, \op{c}}(\bt, z) (\beta) = L^{\cZ, \op{amb}}(j^* \circ \bar f^\cX\circ i^*(\bt), z) \circ \bar \Delta^{\op{c}}_+ \circ e^{- \pi \sqrt{-1} c_1(\pi^*\cE)/z} \beta.
\end{equation}
Furthermore it is compatible with the integral structure and the functor $j^* \circ \pi_*$, i.e., the following diagram commutes;
\begin{equation}\label{e:squarec}
\begin{tikzcd}
D(\cY)_{\cX} \ar[r, " j^* \circ \pi_*"] \ar[d, "s^{\cY, \op{c}}
"] &  j^*(D(\cX)) \ar[d, "s^{\cZ, \op{amb}}
"]\\
\ker (\nabla^{\cY, \op{c}}) \ar[r, "\bar \Delta_+^{\op{c}}"] & \ker \left((j^* \circ \bar f^\cX \circ i^*)^*(\nabla^{\cZ})\right).
\end{tikzcd}
\end{equation}

\end{theo}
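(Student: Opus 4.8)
The plan is to establish the two halves of the statement separately: first the fundamental-solution identity \eqref{e:comc}, from which the assertion that $\bar\Delta^{\op{c}}_+$ carries $(\cY,\op{c})$-flat sections to $\cZ$-flat sections follows quickly, and then the commutativity of \eqref{e:squarec}, which is the substantive part.

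\medskip

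\noindent\textbf{Step 1: the identity \eqref{e:comc} and flatness.} The preliminary observation is that $\pi\circ i=\op{id}_{I\cX}$, so by functoriality of the compactly supported pushforward $\pi^{\op{c}}_*\circ i^{\op{c}}_*=\op{id}$; since $i^{\op{c}}_*$ is an isomorphism (Proposition~\ref{p:tot}), $\pi^{\op{c}}_*=(i^{\op{c}}_*)^{-1}$ and hence $\Delta^{\op{c}}_+\circ i^{\op{c}}_*=j^*$. Given $\beta\in H^*_{\op{CR},\op{c}}(\cY)$, write $\beta=i^{\op{c}}_*(\beta')$ with $\beta'\in H^*_{\op{CR}}(\cX)$ unique, and chain together: Proposition~\ref{p:Lcomc}, which computes $L^{\cY,\op{c}}(\bt,z)\,i^{\op{c}}_*(\beta')$ in terms of $L^{e(\cE)}$ and $i^{\op{c}}_*$; the identity $\Delta^{\op{c}}_+\circ i^{\op{c}}_*=j^*$; and Proposition~\ref{p:noneq1}, $j^*\bigl(L^{e(\cE)}(\bt,z)\gamma\bigr)=L^\cZ(j^*(\bt),z)j^*(\gamma)$. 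The projection formula moves $e^{-\pi\sqrt{-1}c_1(\pi^*\cE)/z}$ past $i^{\op{c}}_*$ via $i^*\pi^*=\op{id}$ (so $\pi^*c_1(\cE)\cup i^{\op{c}}_*(\beta')=i^{\op{c}}_*(c_1(\cE)\cup\beta')$), and the scalar $(2\pi\sqrt{-1}z)^{\op{rk}(\cE)}$ is central and commutes with $L^\cZ$; assembling these yields \eqref{e:comc}. The flatness claim is then formal: by Proposition~\ref{p:cvsreg} the flat sections on each side are generated by $L(\bt,z)z^{-\op{Gr}}z^{\rho}(-)$, and \eqref{e:comc}, together with $\rho(\cY)=\pi^*(\rho(\cX)-c_1(\cE))$, $j^*(\rho(\cX)-c_1(\cE))=\rho(\cZ)$, the fact that $i^{\op{c}}_*$ shifts the Chen--Ruan degree by $2\op{rk}(\cE)$ (the age being unchanged since $\cE$ is pulled back from the coarse space), and the elementary conjugation $e^{-\pi\sqrt{-1}c/z}z^{-\op{Gr}}=z^{-\op{Gr}}e^{-\pi\sqrt{-1}c}$, shows that $\bar\Delta^{\op{c}}_+$ intertwines the fundamental solutions and hence the connections, the leftover powers of $z$ being absorbed by $(2\pi\sqrt{-1}z)^{\op{rk}(\cE)}$.

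\medskip

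\noindent\textbf{Step 2: reduction of \eqref{e:squarec} to a classical identity.} Using Lemma~\ref{l:derind} and the BFK generation result invoked in its proof (that $D(\cY)_{\cX}$ is strongly generated by $i_*(D(\cX))$), it suffices to verify $\bar\Delta^{\op{c}}_+\circ s^{\cY,\op{c}}(i_*(G))=s^{\cZ,\op{amb}}(j^*(G))$ for $G\in D(\cX)$, using $j^*\circ\pi_*(i_*(G))=j^*(G)$. Applying \eqref{e:comc} to strip off the $L$-factors (the change of variables $j^*\circ\bar f^\cX\circ i^*$ built into \eqref{e:comc} matching the one in the bottom row of \eqref{e:squarec}), this reduces, using $\dim\cY-\dim\cZ=2\op{rk}(\cE)$, to the classical identity
\begin{multline*}
\bar\Delta^{\op{c}}_+\,e^{-\pi\sqrt{-1}c_1(\pi^*\cE)/z}\,z^{-\op{Gr}_{\cY}}z^{\rho(\cY)}\,\hat\Gamma_\cY\cup_{I\cY} I^*\!\left((2\pi\sqrt{-1})^{\deg_0/2}\ch^{\op{c}}(i_*(G))\right) \\
= (2\pi\sqrt{-1})^{2\op{rk}(\cE)}\,z^{-\op{Gr}_{\cZ}}z^{\rho(\cZ)}\,\hat\Gamma_\cZ\cup_{I\cZ} I^*\!\left((2\pi\sqrt{-1})^{\deg_0/2}\ch(j^*(G))\right).
\end{multline*}

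\medskip

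\noindent\textbf{Step 3: the classical identity, and the main obstacle.} To verify the displayed identity I would use: orbifold Grothendieck--Riemann--Roch, $\ch^{\op{c}}(i_*(G))\cup\op{Td}(\pi^*\cE^\vee)=i_*(\ch(G))$ and $\ch(j^*(G))=j^*(\ch(G))$; multiplicativity of the Gamma class, $\hat\Gamma_\cY=\pi^*\hat\Gamma_\cX\cup\pi^*\hat\Gamma(\cE^\vee)$ and $\hat\Gamma_\cZ=j^*\hat\Gamma_\cX\cup j^*\hat\Gamma(\cE)^{-1}$ (both valid sector by sector since $\cE$ is pulled back from the coarse space); the reflection formula $\Gamma(1+x)\Gamma(1-x)=\pi x/\sin(\pi x)$, which, after the $(2\pi\sqrt{-1})^{\deg_0/2}$-conjugation, converts $\hat\Gamma(\cE)\cup\hat\Gamma(\cE^\vee)$ into an expression of the form $e^{\pi\sqrt{-1}c_1(\cE)}\op{Td}(\cE^\vee)$ up to an explicit power of $2\pi\sqrt{-1}$; and finally $\Delta^{\op{c}}_+=j^*\circ(i^{\op{c}}_*)^{-1}$ together with the projection formula, $i^*\pi^*=\op{id}$, $\rho(\cY)=\pi^*(\rho(\cX)-c_1(\cE))$, $I^*\pi^*=\pi^*$, and the Chen--Ruan degree shift of $i^{\op{c}}_*$, to see that the residual powers of $z$ and $2\pi\sqrt{-1}$ and the correction $e^{\pm\pi\sqrt{-1}c_1}$ match on the nose. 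These are the computations appearing already in \cite{IMM, Iri3}; the only new ingredient is that the compactly supported Chern character $\ch^{\op{c}}$ of Definition~\ref{d:cch} interacts with $i^{\op{c}}_*$ through GRR exactly as the ordinary Chern character interacts with $i_*$, so the output lands in $H^*_{\op{CR},\op{amb}}(\cZ)$. I expect the main obstacle to be precisely this last bookkeeping step: making every factor of $2\pi\sqrt{-1}$, every power of $z$, and the $e^{\pm\pi\sqrt{-1}c_1}$ corrections cancel exactly, and in particular checking that the $\op{Td}(\pi^*\cE^\vee)$ produced by GRR for $i^{\op{c}}_*$ is absorbed precisely into the difference between $\hat\Gamma_\cY$ and $\pi^*\hat\Gamma_\cX$ after the grading conjugations. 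Everything else is a formal assembly of results already established.
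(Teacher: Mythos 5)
Your proposal is correct and follows essentially the same route as the paper's proof: part (1) is obtained by chaining Proposition~\ref{p:Lcomc}, the identity $\pi^{\op{c}}_*=(i^{\op{c}}_*)^{-1}$, and Proposition~\ref{p:noneq1}, and part (2) is obtained from Lemma~\ref{l:derind} together with the Gamma-class factorization $\hat\Gamma_\cY=\pi^*(\hat\Gamma_\cX\hat\Gamma(\cE^\vee))$, the reflection formula, and the dimension relation $\dim\cY=\dim\cZ+2\op{rk}\cE$. The only minor difference is that you re-invoke the BFK generation result to reduce to $F=i_*(G)$, whereas the paper's Lemma~\ref{l:derind} already packages this and lets the final computation be run for arbitrary $F\in D(\cY)_\cX$.
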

\begin{proof}
The first statement is an immediate consequence of the previous proposition, the fact that $\pi^{\op{c}}_*$ is the inverse of $i^{\op{c}}_*$, and Proposition~\ref{p:noneq1}.  

 To show that  \[ \bar \Delta_+^{\op{c}}\circ s^{\cY, \op{c}}(\bt, z)(F) = s^{\cZ, \op{amb}}(j^* \circ \bar f^\cX\circ i^*(\bt), z) \circ j^*\circ \pi_*(F),\]   note the following facts.  First, recalling the definition of $\hat \Gamma_\cY$ from Definition~\ref{d:Gamma}, we see that 
\begin{align}\label{e:gamma}
\hat \Gamma_\cY &= \pi^*(\hat \Gamma_\cX \hat \Gamma( \cE^\vee)) \\ \nonumber
&= \pi^*\left(\frac{\hat \Gamma_\cX}{\hat \Gamma(\cE)}  \hat \Gamma( \cE^\vee)  \hat \Gamma( \cE)\right)\\ \nonumber
& = \pi^*\left(\frac{\hat \Gamma_\cX}{\hat \Gamma(\cE)} \prod_{j=1}^{\op{rk}(\cE)}\Gamma (1 - \rho_j) \Gamma (1 + \rho_j)\right) \\ \nonumber
& = \pi^*\left(\frac{\hat \Gamma_\cX}{\hat \Gamma(\cE)}  \prod_{j=1}^{\op{rk}(\cE)} \frac{(2 \pi \sqrt{-1}) e^{ \pi \sqrt{-1} \rho_j}(-\rho_j)}{1 - e^{2 \pi \sqrt{-1} \rho_j}}\right)\\ \nonumber
& =  \pi^*\left(e^{ \pi \sqrt{-1} c_1(\cE)} \frac{\hat \Gamma_\cX}{\hat \Gamma(\cE)} (2 \pi \sqrt{-1})^{\deg_0/2} \op{Td}(\cE^\vee)\right) \nonumber
\end{align}
where $\rho_j$ are the Chern roots of $\cE$.
Second, 
\[j^*\left(\frac{\hat \Gamma_\cX}{\hat \Gamma(\cE)}\right) = \hat \Gamma_\cZ.\]
Observing that 
\begin{align*} & \Delta_+^{\op{c}}( z^{-Gr} z^{\rho(\cY)} (2 \pi \sqrt{-1})^{\deg_0/2}(-)) \\ = &\left(\frac{z}{2 \pi \sqrt{-1}}\right)^{\op{rk}(\cE)} z^{-Gr} z^{\rho(\cZ)} (2 \pi \sqrt{-1})^{\deg_0/2}\Delta_+^{\op{c}}(-),\end{align*}
by Lemma~\ref{l:derind} we then have that for all $F \in D(\cY)_{\cX}$,  
\begin{align}\label{e:gamma2} &\left(\frac{z}{2 \pi \sqrt{-1}}\right)^{\op{rk}(\cE)} \Delta_+^{\op{c}}\left(e^{ - \pi \sqrt{-1} c_1(\pi^*\cE)/z} z^{-Gr} \hat \Gamma_\cY  (2 \pi \sqrt{-1})^{\deg_0/2} I^*(\ch^{\op{c}}(F)) \right)\\ \nonumber = &
 z^{-Gr} \hat \Gamma_\cZ (2 \pi \sqrt{-1})^{\deg_0/2} I^*(\ch(j^* \circ \pi_*(F))).
\end{align}
Finally, note that $\dim(\cY) = \dim(\cZ) + 2\op{rk}(\cE)$.
The claim follows from this, \eqref{e:comc}, and \eqref{e:gamma2}.
\end{proof}
\begin{rema}[Relation to \cite{IMM}]\label{r:IMMr}
A very similar statement was shown in \cite[Theorem 3.13]{IMM} and the proof above uses the same ingredients.  Indeed the statement above may be seen implicitly in the results of \cite{IMM} as we explain below.
Among other things, they show that the ambient quantum connection $\nabla^\cZ$ is related to $\nabla^{e^{-1}(\cE^\vee)}$ by the functor $j_*$ after a twist by $\op{det}(\cE)[\op{rk}(\cE)]$.  After composing with the pullback $\pi^*$, that result is essentially the adjoint to Theorem~\ref{t:dcom}, the observation of which almost gives a second proof of Theorem~\ref{t:dcom}, via Proposition~\ref{p:cvsreg} and the relations in Section~\ref{p:cvsreg}.

Note, however, that the statements differ further in the change of variables. One is the inverse of the other which, to the author's knowledge, is most easily seen  \emph{a-posteriori} by comparing the statements of  \cite[Theorem 3.13]{IMM} and Theorem~\ref{t:dcom} above.
The presentation given above implies more directly the results below involving the narrow quantum $D$-module of $\cY$, and the change of variables given above is designed to be useful in applications such as the LG/CY correspondence of \cite{Sh}.
\end{rema}

\subsection{Narrow quantum Serre duality}\label{s:narqsd}
In this section we prove a variation of quantum Serre duality which gives an isomorphism between the narrow quantum $D$-module of $\cY$ and the ambient quantum $D$-module of $\cZ$.
An application of this  theorem is given in \cite{Sh}.

\begin{defi}
Define the map $\hat f^\cY : H^*_{\op{CR}}(\cY) \to H^*_{\op{CR}, \op{nar}}(\cY) \otimes P^\cY$ by
\[ \hat f^\cY(\bt) := \sum_{d \in \op{Eff}} \sum_{k \geq 0} \frac{1}{k!} I_* \circ{ev_2}_*\left( ev_1^*(e(\cE^\vee)) \cup \prod_{j=3}^{k+2} ev_j^*(\bt) \cap \left[\sMbar_{0,k+2}(\cc Y, d)\right]^{vir} \right).\]  
Since the evaluation maps are proper and $e(\cE^\vee) \in H^*_{\op{CR}, \op{nar}}(\cY)$, $\hat f^\cY(\bt)$ will also lie in the narrow cohomology.
\end{defi}
\begin{lemma}\label{l:fY}
In $H^*_{\op{CR}}(\cY) \otimes P^{\cY}$, \[\hat f^\cY(\bt) = i_* (\hat f^\cX(i^* (\bt)).\]
\end{lemma}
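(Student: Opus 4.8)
The plan is to compare the two sides term by term using the structure of the maps $\hat f^\cX$ and $\hat f^\cY$ and the fact that $i_*$ commutes with the relevant Gromov--Witten constructions on the moduli space of stable maps. Recall that $\hat f^\cY(\bt)$ is built from the fundamental solution operator applied to $e(\cE^\vee) \in H^*_{\op{CR},\op{nar}}(\cY)$, i.e. it has exactly the shape of $L^\cY(\bt, z)$ without the $\psi$-class denominator (only the $z^0$-term in a suitable sense), acting on $e(\cE^\vee)$; while $\hat f^\cX(i^*(\bt))$ from Proposition~\ref{p:noncov} is the non-equivariant limit of a twisted analogue built from the class $e_\lambda(\cE^\vee)$ on $\cX$. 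The key observation is that $e(\cE^\vee) = i_*(1)$ in $H^*_{\op{CR}}(\cY)$ (the Euler class of $\pi^*\cE^\vee$ is Poincar\'e dual to the zero section, as used already in the proof of Proposition~\ref{p:tot}), so the input class is itself a pushforward from $\cX$.

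First I would reduce the statement to an identity on the universal curve over $\sMbar_{0,k+2}(\cY, d)$. Since $\cE$ is convex, by Lemma~\ref{l:propmap} the evaluation maps are proper and $\sMbar_{0,k+2}(\cY, d)$ sits inside $\sMbar_{0,k+2}(\cX, d)\times_{\bar I\cX}\tot(\bar i^*\cE^\vee)$; in fact, as in \cite{GivE,GP}, integrals against $[\sMbar_{0,k+2}(\cY,d)]^{vir}$ localize (and have a well-defined non-equivariant limit) to integrals against $[\sMbar_{0,k+2}(\cX,d)]^{vir}$ weighted by the inverse equivariant Euler class of $\mathbb R\pi_* f^*(\cE)^\vee$. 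Then I would insert $ev_1^*(e(\cE^\vee)) = ev_1^*(i_*(1))$ and use the excess intersection / self-intersection formula along the zero section: pushing forward from $\cX$ to $\cY$ and pulling back via $ev_1$ produces exactly the Euler-class weight that, combined with the localization contribution, collapses to the $e^{-1}(\cE^\vee)$-twisted integral on $\cX$ whose output defines $\hat f^\cX$. Concretely, one matches $ev_1^*(e(\cE^\vee)) \cap [\sMbar(\cY,d)]^{vir}$ with $i_*$ applied to $[\sMbar(\cX,d)]^{vir}$ twisted by $e(\mathbb R^1\pi_* f^*\cE^\vee(-p_1))/ev_1^*(e(\cE^\vee))$ (cf. the rewriting in \eqref{e:newex} with $i=1$, $i$ the marked-point index), exactly the integrand defining $\langle\langle \widetilde{\,\cdot\,},\ldots\rangle\rangle^{e^{-1}(\cE^\vee)}$. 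Finally, using that $i_* = i^{\op{c}}_*$ is an isomorphism commuting with $ev_2$-pushforward and $I_*$ (the orbifold analogue of Proposition~\ref{p:pp}, applied componentwise over the inertia stack, noting each $\cY_\gamma$ is the total space of $\cE^\vee|_{\cX_\gamma}$ as in Proposition~\ref{p:tot}), one commutes $i_*$ past the outer $I_*\circ {ev_2}_*$ to land on $i_*(\hat f^\cX(i^*(\bt)))$.

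The main obstacle I anticipate is bookkeeping the excess/self-intersection term precisely: one must verify that the Euler class of the excess bundle appearing when restricting $ev_1^*$ of a zero-section class to the substack $\sMbar_{0,k+2}(\cY,d)\subset \sMbar_{0,k+2}(\cX,d)\times\tot(\bar i^*\cE^\vee)$ matches the non-equivariant limit of $e_\lambda(\mathbb R^1\pi_* f^*\cE^\vee(-p_1))/ev_1^*(e_\lambda(\cE^\vee))$ — in particular that the denominator $ev_1^*(e(\cE^\vee))$ genuinely divides the numerator as a class (which is Lemma~\ref{l:hnot}, guaranteeing $\mathbb R^1\pi_* f^*\cE^\vee(-p_1)$ is a bundle, together with the distinguished triangle preceding \eqref{e:newex}), so that everything has a clean non-equivariant limit. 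Once that identification is in place, the rest is the naturality of pushforward along $i$ with respect to the operations defining both sides, and the equality $\hat f^\cY(\bt) = i_*(\hat f^\cX(i^*(\bt)))$ follows.
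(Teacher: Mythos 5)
Your argument is correct in outline, but it takes a genuinely different route from the paper's. The paper's proof is short and modular: it first uses the same equivariant localization identities already established in Proposition~\ref{p:noneq2} (namely \eqref{e:equivinv} and \eqref{e:equivpair}) to write $\hat f^\cY(\bt) = \lim_{\lambda\to 0}\pi^*(\hat f^\lambda(i^*\bt))$, and then deploys the elementary push-pull/self-intersection identity $i_*(\beta)=\pi^*(i^*i_*(\beta))=\pi^*(e_\lambda(\cE^\vee)\cup\beta)$ (valid because $\pi^*$ and $i^*$ are inverse isomorphisms), which immediately gives $i_*(\hat f^\lambda(\bt)/e_\lambda(\cE^\vee))=\pi^*(\hat f^\lambda(\bt))$; taking $\lambda\to 0$ and invoking Proposition~\ref{p:noncov} finishes the job in three lines. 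By contrast, you unwind the definition of $\hat f^\cY$ as a Gromov--Witten integral on $\cY$, use $e(\pi^*\cE^\vee)=i_*(1)$, localize $[\sMbar_{0,k+2}(\cY,d)]^{vir}$ to the fixed locus $\sMbar_{0,k+2}(\cX,d)$ with virtual normal bundle $\mathbb R\pi_*f^*\cE^\vee$, then cancel $ev_1^*(e_\lambda(\cE^\vee))$ against the denominator of $e_\lambda(\mathbb R\pi_*f^*\cE^\vee)^{-1}$ using the distinguished triangle and Lemma~\ref{l:hnot} (exactly the rewriting in \eqref{e:newex}), arriving at the defining integrand of $\br{\br{\tilde 1,\bar T_i}}^{e^{-1}(\cE^\vee)}$; finally, because the evaluation maps restricted to the fixed locus factor through the zero section $i:I\cX\hookrightarrow I\cY$, the ambient ${ev_2}_*$ becomes $i_*\circ{ev_2^\cX}_*$, producing $i_*(\hat f^\cX(i^*\bt))$. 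Both proofs are valid; yours is more hands-on and essentially re-derives (for this specific input) the content of the equivariant comparison that Propositions~\ref{p:noneq2}, \ref{p:noncov}, and \eqref{e:newex} package, whereas the paper's is shorter precisely because it reuses that machinery abstractly. One small imprecision you should repair: $i_*$ and $i^{\op{c}}_*$ are not equal as maps (they land in $H^*(\cY)$ and $H^*_{\op{c}}(\cY)$ respectively, related by $\phi\circ i^{\op{c}}_*=i_*$); what you want is that $i_*$ is an isomorphism onto $H^*_{\op{CR},\op{nar}}(\cY)$ and commutes with the relevant operations, which is Proposition~\ref{p:tot} and the fixed-locus factorization of the evaluation maps.
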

\begin{proof}
Using~\eqref{e:equivinv} and~\eqref{e:equivpair} and the same analysis as in the proof of Proposition~\ref{p:noneq2} , $$\hat f^\cY(\bt) = \lim_{\lambda \mapsto 0} \pi^*( \hat f^\lambda(i^*(\bt))).$$  Next note that \begin{align*} i_* ( \hat f^\lambda(\bt)/e_\lambda(\cE^\vee)) &= \pi^*(i^*(i_* ( \hat f^\lambda(\bt)/e_\lambda(\cE^\vee)))) \\
&= \pi^*( \hat f^\lambda(\bt))
\end{align*} and $\hat f^\cX(\bt) = \lim_{\lambda \mapsto 0} \hat f^\lambda(\bt)/e_\lambda(\cE^\vee)$.
\end{proof}

%

\begin{defi}. 
We define the transformation $\Delta_+: H^*_{\op{CR}, \op{nar}}(\cY) \to H^*_{\op{CR}, \op{amb}}(\cZ)$ as follows.  Given $\alpha \in H^*_{\op{CR}, \op{nar}}(\cY)$, let $\tilde \alpha \in H^*_{\op{CR}, \op{c}}(\cY)$ be a lift of $\alpha$.  Define
\[\Delta_+(\alpha) :=\Delta_+^{\op{c}}(\tilde \alpha).\]
Define
\[\bar \Delta_+ := (2 \pi \sqrt{-1} z)^{\op{rk}(\cE)} \Delta_+.\]
\end{defi}

\begin{lemma} With assumption~\ref{a:ass2},
the map $\Delta_+: H^*_{\op{CR}, \op{nar}}(\cY) \to H^*_{\op{CR}, \op{amb}}(\cZ)$ described above is well-defined.
\end{lemma}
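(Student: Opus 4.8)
The plan is to show that the map $\Delta_+^{\op{c}} = j^* \circ \pi^{\op{c}}_*$ annihilates $\ker(\phi) \subseteq H^*_{\op{CR},\op{c}}(\cY)$. This is exactly what well-definedness requires: lifts exist by definition of $H^*_{\op{CR},\op{nar}}(\cY) = \op{im}(\phi)$, and any two lifts of a class $\alpha$ differ by an element of $\ker(\phi)$, so once $\Delta_+^{\op{c}}$ is seen to kill $\ker(\phi)$, the value $\Delta_+(\alpha) := \Delta_+^{\op{c}}(\tilde\alpha)$ is independent of the chosen lift (and lies in $H^*_{\op{CR},\op{amb}}(\cZ)$ by construction of $\Delta_+^{\op{c}}$).

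First I would pin down $\ker(\phi)$ using the identifications already available for the total space. By the proof of Proposition~\ref{p:tot} one has $\phi \circ i^{\op{c}}_* = i_*$ (the intermediate map $\phi_{\cX}$ is the identity since $\cX$ is proper), and $i^{\op{c}}_* \colon H^*_{\op{CR}}(\cX) \to H^*_{\op{CR},\op{c}}(\cY)$ is an isomorphism whose inverse is $\pi^{\op{c}}_*$, as recalled in the proof of Theorem~\ref{t:dcom}. Hence, writing $\beta := \pi^{\op{c}}_*(\omega)$ for $\omega \in H^*_{\op{CR},\op{c}}(\cY)$, we have $\omega = i^{\op{c}}_*(\beta)$ and $\phi(\omega) = i_*(\beta)$, so $\omega \in \ker(\phi)$ precisely when $i_*(\beta) = 0$. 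Thus $\Delta_+^{\op{c}}(\omega) = j^*(\beta)$, and the whole statement reduces to the inclusion
\[ \ker\big(i_* \colon H^*_{\op{CR}}(\cX) \to H^*_{\op{CR}}(\cY)\big) \subseteq \ker\big(j^* \colon H^*_{\op{CR}}(\cX) \to H^*_{\op{CR}}(\cZ)\big). \]

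To prove this inclusion I would apply $i^*$ and invoke the self-intersection formula. The normal bundle of the zero section $i$ is $\cE^\vee$, and since $\cE$ is convex it is pulled back from the coarse space (Remark~\ref{r:conv}); hence each twisted sector of $\cY$ is the total space of $\cE^\vee$ restricted to the corresponding sector of $\cX$, and via Remark~\ref{remaCR} the operator $i^* i_*(-)$ is Chen--Ruan cup product with the untwisted class $e(\cE^\vee)$. So $i_*(\beta) = 0$ forces $e(\cE^\vee)\cup\beta = 0$, hence $e(\cE)\cup\beta = 0$ because $e(\cE^\vee) = (-1)^{\op{rk}(\cE)}e(\cE)$. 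Since $j_*(1) = e(\cE)$, the projection formula gives $j_*(j^*\beta) = e(\cE)\cup\beta = 0$, so $j^*\beta \in \op{im}(j^*) \cap \ker(j_*)$, which is $\{0\}$ by Assumption~\ref{a:ass2} (equivalently, $j^*\beta$ is an ambient class pairing trivially with all ambient classes, hence zero by nondegeneracy of the ambient Poincar\'e pairing). Therefore $\Delta_+^{\op{c}}(\omega) = j^*(\beta) = 0$ for all $\omega \in \ker(\phi)$, as required.

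The only step carrying genuine content — and the unique place where Assumption~\ref{a:ass2} enters — is this inclusion $\ker(i_*) \subseteq \ker(j^*)$; everything else is a diagram chase through Proposition~\ref{p:tot}, the proof of Theorem~\ref{t:dcom}, and the standard Chen--Ruan self-intersection and projection formulas. I do not anticipate a serious obstacle, the main point of care being to apply these formulas to the genuine orbifold pushforwards $i_*,j_*$ rather than their coarse-space analogues, which is legitimate here precisely because $\cE$ is pulled back from $X$.
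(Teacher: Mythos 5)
Your proof is correct and follows essentially the same strategy as the paper's: reduce well-definedness to the inclusion $\ker(i_*) \subseteq \ker(j^*)$, deduce $e(\cE)\cup\beta = 0$ from $i_*(\beta)=0$ via the self-intersection/projection formula, and then conclude $j^*\beta = 0$ using $\op{im}(j^*)\cap\ker(j_*) = 0$ from Assumption~\ref{a:ass2}. Your bookkeeping is in fact slightly cleaner: the paper sets $\gamma = \pi^*\beta$ and routes the argument through the fiber product $\cY|_\cZ$ with its inclusion $\tilde j$ into $\cY$, computing $j^*\beta = \tilde i^*\tilde j^*\gamma$ and applying (a variant of) Assumption~\ref{a:ass2} to $\tilde j$, whereas you stay entirely on $\cX$ by applying $i^*$ to $i_*(\beta)=0$ and then pushing $j^*\beta$ forward via $j_*$ — same algebra, no auxiliary space needed.
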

\begin{proof}
The lift $\tilde \alpha$ is only defined up to an element of $\ker(\phi)$.  We check that
\[\pi^{\op{c}}_*(\ker(\phi)) \subset \ker(j^*).\]  

Given $\tilde \alpha \in \ker(\phi)$, since $i^{\op{c}}_*$ is an isomorphism there exists an element $\beta \in H^*_{\op{CR}}(\cX)$ such that $\tilde \alpha = i^{\op{c}}_*(\beta)$.  Then $\pi^{\op{c}}_*(\alpha) = \pi^{\op{c}}_*(i^{\op{c}}_*(\beta)) = \beta$.  We want to show that $j^*(\beta) = 0$.  By assumption, $\phi (\tilde \alpha) = \phi \circ i^{\op{c}}_*(\beta) = i_*(\beta) = 0$.  Write $\beta$ as $i^*(\gamma)$, for some $\gamma \in H^*_{\op{CR}}(\cY)$.  Consider the following diagram
\[
\begin{tikzcd}
\cY|_\cZ \ar[r, "\tilde j"] \ar[d, "\tilde \pi"] & \cY \ar[d, "\pi"] \\
\cZ \ar[r, "j"] \ar[u, bend left, "\tilde i"]& \cX.  \ar[u, bend left, "i"]
\end{tikzcd}
\]
Then $j^*(\beta) = j^* i^*(\gamma) = \tilde i^* \tilde j^*(\gamma)$ is zero if and only if $\tilde j^*(\gamma) = 0$.  By Assumption~\ref{a:ass2}, $\tilde j^* (\gamma) = 0$ if and only if $\tilde j_* \tilde j^* (\gamma) = e(\cE) \cup \gamma = 0$.   Up to a sign, this is equal to $e(\cE^\vee) \cup \gamma = i_* i^* (\gamma) =i_*(\beta)$, which is zero by assumption.
\end{proof}
\begin{lemma}\label{l:deltadiff}
Given $\alpha = e(\cE^\vee) \cup \beta \in H^*_{\op{CR}}(\cX)$, 
\[\Delta_+(\pi^* (\alpha)) = j^* \beta.\] In particular, $\Delta_+: H^*_{\op{CR}, \op{nar}}(\cY) \to H^*_{\op{CR}, \op{amb}}(\cZ)$ is an isomorphism. 
\end{lemma}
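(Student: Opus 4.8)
The plan is to reduce the statement to Proposition~\ref{p:tot}, which identifies $H^*_{\op{CR}, \op{nar}}(\cY)$ with $\op{im}(i_*)$, together with the fact recorded in its proof that $i^{\op{c}}_*$ is an isomorphism whose inverse is $\pi^{\op{c}}_*$. The first step is to observe that for $\alpha = e(\cE^\vee) \cup \beta$ one has $\pi^*(\alpha) = i_*(\beta)$. Indeed, since $i$ is the zero section of $\tot(\cE^\vee)$ we have $i_*(1) = e(\pi^*\cE^\vee)$, and writing $\beta = i^*(\pi^*(\beta))$ the projection formula gives $i_*(\beta) = \pi^*(\beta) \cup i_*(1) = \pi^*(\beta) \cup e(\pi^*\cE^\vee) = \pi^*(e(\cE^\vee) \cup \beta) = \pi^*(\alpha)$; here Remark~\ref{remaCR} is what lets us treat cup product with the untwisted class $e(\pi^*\cE^\vee)$ sector by sector as an ordinary cup product. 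In particular $\pi^*(\alpha) \in \op{im}(i_*) = H^*_{\op{CR}, \op{nar}}(\cY)$.

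Next I would compute $\Delta_+(\pi^*(\alpha))$ from the definition of $\Delta_+$ via a lift. Since $\phi \circ i^{\op{c}}_* = i_*$ (proof of Proposition~\ref{p:pp}), the class $i^{\op{c}}_*(\beta) \in H^*_{\op{CR}, \op{c}}(\cY)$ is a lift of $\pi^*(\alpha) = i_*(\beta)$, so $\Delta_+(\pi^*(\alpha)) = \Delta_+^{\op{c}}(i^{\op{c}}_*(\beta)) = j^* \circ \pi^{\op{c}}_* \circ i^{\op{c}}_*(\beta) = j^*(\beta)$, the last equality because $\pi^{\op{c}}_*$ is the inverse of $i^{\op{c}}_*$. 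Well-definedness of $\Delta_+$ is exactly the preceding lemma, so evaluating on the particular lift $i^{\op{c}}_*(\beta)$ is legitimate. This proves the displayed identity $\Delta_+(\pi^*(\alpha)) = j^*\beta$.

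For the ``in particular'' statement I would treat surjectivity and injectivity separately. Surjectivity is immediate: every element of $H^*_{\op{CR}, \op{amb}}(\cZ) = \op{im}(j^*)$ is $j^*(\beta)$ for some $\beta$, and by the identity just proved $j^*(\beta) = \Delta_+(\pi^*(e(\cE^\vee) \cup \beta))$ with $\pi^*(e(\cE^\vee) \cup \beta) \in H^*_{\op{CR}, \op{nar}}(\cY)$. For injectivity I would take $\alpha \in H^*_{\op{CR}, \op{nar}}(\cY)$ with $\Delta_+(\alpha) = 0$, write $\alpha = i_*(\eta)$ using Proposition~\ref{p:tot}, set $\gamma := \pi^*(\eta)$ so that $\eta = i^*(\gamma)$ and $\alpha = i_* i^*(\gamma) = e(\pi^*\cE^\vee) \cup \gamma$ (projection formula), and observe $0 = \Delta_+(\alpha) = j^*(\eta) = j^* i^*(\gamma)$. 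Then I would invoke the chain of equivalences already worked out inside the proof of the preceding lemma, namely $j^* i^*(\gamma) = 0 \iff \tilde j^*(\gamma) = 0 \iff e(\cE) \cup \gamma = 0 \iff e(\cE^\vee) \cup \gamma = 0$, whose crucial step uses Assumption~\ref{a:ass2}; this forces $e(\pi^*\cE^\vee) \cup \gamma = 0$, i.e. $\alpha = 0$. So $\Delta_+$ is injective as well as surjective, hence an isomorphism.

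The routine part is the projection-formula bookkeeping and the (standard) matching of Chen--Ruan with ordinary cup products via Remark~\ref{remaCR}, which I would not spell out in detail. The only point with real content is injectivity, and there the essential input — that multiplication by $e(\cE)$ and the restriction $\tilde j^*$ have the same kernel under Assumption~\ref{a:ass2} — was already established in the proof of the preceding lemma, so I do not expect a genuine obstacle beyond quoting it and $\pi^{\op{c}}_* = (i^{\op{c}}_*)^{-1}$ correctly.
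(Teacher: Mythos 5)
Your proof of the displayed identity $\Delta_+(\pi^*(\alpha)) = j^*\beta$ is essentially the paper's: write $\pi^*(\alpha) = i_*(\beta) = \phi(i^{\op{c}}_*(\beta))$ via the projection formula, take $i^{\op{c}}_*(\beta)$ as the lift, and use $\pi^{\op{c}}_* = (i^{\op{c}}_*)^{-1}$.

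For the ``in particular'' you take a genuinely different route. You prove surjectivity from the formula and then injectivity directly, the latter by writing $\alpha = i_*(\eta) = e(\pi^*\cE^\vee)\cup\gamma$ with $\gamma = \pi^*(\eta)$ and rerunning the chain $j^*i^*(\gamma)=0 \iff \tilde j^*(\gamma)=0 \iff e(\cE)\cup\gamma = 0 \iff e(\cE^\vee)\cup\gamma=0$ from the well-definedness lemma. The paper instead exhibits an explicit inverse: it chains $\op{im}(j^*) \cong H^*_{\op{CR}}(\cX)/\ker(-\cup e(\cE^\vee)) \cong \op{im}(-\cup e(\cE^\vee)) \cong H^*_{\op{CR},\op{nar}}(\cY)$ (the first isomorphism using Assumption~\ref{a:ass2} to see $\ker(j^*)=\ker(-\cup e(\cE^\vee))$, the last using $\pi^*$ and Proposition~\ref{p:tot}), and this composite sends $j^*\beta \mapsto \pi^*(e(\cE^\vee)\cup\beta)$, which the displayed formula identifies as the inverse of $\Delta_+$. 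Both arguments rest on the same ingredients (Assumption~\ref{a:ass2}, Proposition~\ref{p:tot}, the projection formula); the paper's is slightly more economical in that it does not re-derive the kernel analysis, while yours is more hands-on and makes the injectivity transparent. Either is fine.
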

\begin{proof}
Observe that 
\begin{align*} \pi^*(\alpha) &= e(\cE^\vee) \cup \pi^*(\beta) \\
&= i_* \circ i^* \circ \pi^*(\beta) \\
& =i_*(\beta) \\
& = \phi \circ i^{\op{c}}_*(\beta).\end{align*}
Therefore, 
$\Delta_+(\pi^* (\alpha)) = j^* \circ \pi^{\op{c}}_* \circ i^{\op{c}}_*(\beta) = j^*(\beta)$.  The second claim follows from 
\begin{align*} H^*_{\op{CR}, \op{amb}}(\cZ)  &= \op{im}(j^*) \\
&\cong H^*_{\op{CR}}(\cX)/(\ker (-\cup e(\cE^\vee)) \\
&\cong \op{im}(-\cup e(\cE^\vee)) \\
&= \pi^*(\op{im}(-\cup e(\cE^\vee))) \\
&= H^*_{\op{CR}, \op{nar}}(\cY).
\end{align*} where the second and third terms are isomorphic by \eqref{e:ass2} and the final equality is by Proposition~\ref{p:tot}.
\end{proof}

%
%
%
We will need the following lemma.
\begin{lemma}\label{l:noncov}
$\Delta_+(\hat f^\cY(\bt)) = j^*\circ \hat f ^\cX\circ i^*(\bt)$. 
\end{lemma}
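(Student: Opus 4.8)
The plan is to produce an explicit lift of $\hat f^\cY(\bt)$ to compactly supported cohomology and then unwind the definition of $\Delta_+$. By Lemma~\ref{l:fY} we have $\hat f^\cY(\bt) = i_*\bigl(\hat f^\cX(i^*(\bt))\bigr)$ in $H^*_{\op{CR}}(\cY)\otimes P^\cY$. Recall from the proof of Proposition~\ref{p:pp} (as used in Proposition~\ref{p:tot}) that $\phi\circ i^{\op{c}}_* = i_*$, where $i^{\op{c}}_*\colon H^*_{\op{CR}}(\cX)\to H^*_{\op{CR},\op{c}}(\cY)$ is the compactly supported pushforward along the zero section. Therefore the class $i^{\op{c}}_*\bigl(\hat f^\cX(i^*(\bt))\bigr)\in H^*_{\op{CR},\op{c}}(\cY)\otimes P^\cY$ satisfies
\[
\phi\Bigl(i^{\op{c}}_*\bigl(\hat f^\cX(i^*(\bt))\bigr)\Bigr) = i_*\bigl(\hat f^\cX(i^*(\bt))\bigr) = \hat f^\cY(\bt),
\]
so it is a valid lift of $\hat f^\cY(\bt)$ in the sense of the definition of $\Delta_+$.

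Next I would apply the definition $\Delta_+(\alpha) := \Delta_+^{\op{c}}(\tilde\alpha) = j^*\circ\pi^{\op{c}}_*(\tilde\alpha)$ with $\tilde\alpha = i^{\op{c}}_*\bigl(\hat f^\cX(i^*(\bt))\bigr)$, and then use that $\pi^{\op{c}}_*$ is the inverse of the isomorphism $i^{\op{c}}_*$ (this is exactly the identity $\pi^{\op{c}}_*\circ i^{\op{c}}_* = \op{id}$ invoked in the proof that $\Delta_+$ is well-defined, coming from Proposition~\ref{p:tot}). This gives
\[
\Delta_+\bigl(\hat f^\cY(\bt)\bigr) = j^*\circ\pi^{\op{c}}_*\circ i^{\op{c}}_*\bigl(\hat f^\cX(i^*(\bt))\bigr) = j^*\bigl(\hat f^\cX(i^*(\bt))\bigr) = j^*\circ\hat f^\cX\circ i^*(\bt),
\]
which is the claim. (Since $\Delta_+$ is well-defined by the preceding lemma, any lift would do; the point is just that $i^{\op{c}}_*\bigl(\hat f^\cX(i^*(\bt))\bigr)$ is a convenient one.)

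There is no real obstacle here: this is a bookkeeping lemma assembling three already-established facts — the explicit form of $\hat f^\cY$ from Lemma~\ref{l:fY}, the compatibility $\phi\circ i^{\op{c}}_* = i_*$, and $\pi^{\op{c}}_*\circ i^{\op{c}}_* = \op{id}$. The only point requiring a moment's care is to note that everything is stated over the coefficient ring $P^\cY$ and that $\hat f^\cX\circ i^*$ takes values in $H^*_{\op{CR}}(\cX)\otimes P^\cY$ (via the change of variables $i^*$), so that $j^*\circ\hat f^\cX\circ i^*(\bt)$ indeed lands in $H^*_{\op{CR},\op{amb}}(\cZ)\otimes P^\cY$, matching the target of $\Delta_+$.
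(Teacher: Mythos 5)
Your proof is correct and follows essentially the same route as the paper: start from Lemma~\ref{l:fY}, use $\phi\circ i^{\op{c}}_* = i_*$ to exhibit $i^{\op{c}}_*(\hat f^\cX(i^*(\bt)))$ as a lift, apply $\Delta_+^{\op{c}} = j^*\circ\pi^{\op{c}}_*$, and conclude via $\pi^{\op{c}}_*\circ i^{\op{c}}_* = \op{id}$. The paper's proof is a four-line chain of equalities encoding exactly these steps.
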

\begin{proof}
By Lemma~\ref{l:fY} and the  definition of $\Delta_+$, 
\begin{align*}
\Delta_+ (\hat f^\cY(\bt)) & = \Delta_+(i_*(\hat f^\cX(i^*(\bt))) \\
 &= \Delta_+ ( \phi \circ i^{\op{c}}_*(\hat f ^\cX(i^*(\bt)) ))\\
& = 
j^* \circ \pi^{\op{c}}_* (i^{\op{c}}_* ( \hat f ^\cX(i^*(\bt)))) \\
& = j^*\circ \hat f ^\cX\circ i^*(\bt).\end{align*}
\end{proof}

\begin{prop}\label{p:Lcom+}
The following operators are equal after a change of variables: 
\begin{align*}  \Delta_+ \circ L^{\cY}(\bt, z) \circ \phi &=  
\Delta_+^{\op{c}}  \circ L^{\cY, \op{c}}(\bt, z)  \\
& = L^{\cZ, \op{amb}}(j^* \circ \bar f^\cX\circ i^*(\bt), z) \circ \Delta^{\op{c}}_+ \circ e^{- \pi \sqrt{-1} c_1(\pi^*\cE)/z} \\
&= L^{\cZ, \op{amb}}(\bar  f^\cY(\bt), z)  \circ \Delta_+\circ e^{- \pi \sqrt{-1} c_1(\pi^*\cE)/z} \circ \phi,
\end{align*}
where 
\begin{align}\label{e:barf} \bar f^\cY (\bt) &= 
\nonumber  \Delta_+ ( \hat f^\cY(\bt)) - \pi \sqrt{-1} c_1(\cE) \\
 &=\Delta_+ \left( \sum_{i \in I_{\op{nar}}} \langle \langle e(\cE^\vee), T_i \rangle \rangle^{\cY}(\bt ) T^i \right) - \pi \sqrt{-1} c_1(\cE).
\end{align}
\end{prop}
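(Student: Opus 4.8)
The plan is to verify the three displayed equalities in turn, reading the chain of operators from left to right; the phrase ``equal after a change of variables'' refers only to the argument of $L^{\cZ,\op{amb}}$, which I will make explicit, so that each equality becomes a genuine identity of $\CC[z,z^{-1}]$-linear operators on $H^*_{\op{CR},\op{c}}(\cY)\otimes P^\cY[z,z^{-1}]$. The one preliminary fact I would isolate is that $\Delta_+\circ\phi=\Delta^{\op{c}}_+$: for $u\in H^*_{\op{CR},\op{c}}(\cY)$ the class $u$ is itself a lift of $\phi(u)$, so by the definition of $\Delta_+$ (well defined granted Assumption~\ref{a:ass2}) one has $\Delta_+(\phi(u))=\Delta^{\op{c}}_+(u)$. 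The first equality then follows by combining this with \eqref{e:c3} of Proposition~\ref{p:phi}, namely $L^\cY(\bt,z)\circ\phi=\phi\circ L^{\cY,\op{c}}(\bt,z)$ (in particular $L^\cY(\bt,z)\circ\phi$ takes values in $\op{im}(\phi)=H^*_{\op{CR},\op{nar}}(\cY)$, so the composite with $\Delta_+$ is defined):
\[\Delta_+\circ L^\cY(\bt,z)\circ\phi=\Delta_+\circ\phi\circ L^{\cY,\op{c}}(\bt,z)=\Delta^{\op{c}}_+\circ L^{\cY,\op{c}}(\bt,z).\]

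For the second equality I would apply Theorem~\ref{t:dcom}. Its equation \eqref{e:comc} reads $\bar\Delta^{\op{c}}_+\circ L^{\cY,\op{c}}(\bt,z)=L^{\cZ,\op{amb}}(j^*\circ\bar f^\cX\circ i^*(\bt),z)\circ\bar\Delta^{\op{c}}_+\circ e^{-\pi\sqrt{-1}c_1(\pi^*\cE)/z}$. Since $\bar\Delta^{\op{c}}_+=(2\pi\sqrt{-1}z)^{\op{rk}(\cE)}\Delta^{\op{c}}_+$ and the scalar $(2\pi\sqrt{-1}z)^{\op{rk}(\cE)}$ is a unit in $\CC[z,z^{-1}]$ commuting with $L^{\cZ,\op{amb}}$, with $\Delta^{\op{c}}_+$, and with $e^{-\pi\sqrt{-1}c_1(\pi^*\cE)/z}$, I can cancel it from both sides, which yields the second equality verbatim.

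The third equality has two ingredients. First, $\phi$ is a morphism of $H^*_{\op{CR}}(\cY)$-modules (forgetting compact support commutes with wedging by a form), so it commutes with multiplication by $e^{-\pi\sqrt{-1}c_1(\pi^*\cE)/z}$; since moreover $e^{-\pi\sqrt{-1}c_1(\pi^*\cE)/z}$ preserves $H^*_{\op{CR},\op{nar}}(\cY)$, which is an ideal by Proposition~\ref{p:tot}, combining this with $\Delta_+\circ\phi=\Delta^{\op{c}}_+$ gives $\Delta^{\op{c}}_+\circ e^{-\pi\sqrt{-1}c_1(\pi^*\cE)/z}=\Delta_+\circ e^{-\pi\sqrt{-1}c_1(\pi^*\cE)/z}\circ\phi$. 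Second, I would identify the two changes of variables: by Proposition~\ref{p:Lcomc} one has $\bar f^\cX(\bt)=\hat f^\cX(\bt)-\pi\sqrt{-1}c_1(\cE)$, and by Lemma~\ref{l:noncov} $j^*(\hat f^\cX(i^*(\bt)))=\Delta_+(\hat f^\cY(\bt))$, so that
\[j^*\circ\bar f^\cX\circ i^*(\bt)=\Delta_+(\hat f^\cY(\bt))-\pi\sqrt{-1}c_1(\cE)=\bar f^\cY(\bt),\]
the last equality being the defining formula \eqref{e:barf} (with $c_1(\cE)$ denoting, as there, the restriction to $\cZ$). Together with the second equality, these two facts give the third. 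For the explicit form of $\bar f^\cY$ in \eqref{e:barf}, I would unwind the definition of $\hat f^\cY$ against that of the double bracket: since $e(\cE^\vee)\in H^*_{\op{CR},\op{nar}}(\cY)$ and hence $\hat f^\cY(\bt)\in H^*_{\op{CR},\op{nar}}(\cY)$, expanding it in a basis $\{T_i\}_{i\in I_{\op{nar}}}$ of $H^*_{\op{CR},\op{nar}}(\cY)$ with dual basis $\{T^i\}$ for $\langle -,-\rangle^{\cY,\op{nar}}$, pairing the pushforward formula defining $\hat f^\cY$ against $T_i$ returns exactly $\langle \langle e(\cE^\vee),T_i\rangle \rangle^\cY(\bt)$ (compact support enters only through the choice of lift of $e(\cE^\vee)$, and the virtual integral does not see that choice), whence $\hat f^\cY(\bt)=\sum_{i\in I_{\op{nar}}}\langle \langle e(\cE^\vee),T_i\rangle \rangle^\cY(\bt)\,T^i$.

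I expect the only real obstacle to be the bookkeeping of supports: confirming at each stage that the domains and codomains of $\phi$, $\Delta_+$, $\Delta^{\op{c}}_+$, $e^{-\pi\sqrt{-1}c_1(\pi^*\cE)/z}$, $L^\cY$ and $L^{\cY,\op{c}}$ match up, checking the module-map property of $\phi$ against the compactly supported pullbacks and pushforwards entering $L^{\cY,\op{c}}$ and $\hat f^\cY$, and keeping the restrictions of $c_1(\cE)$ to $\cX$, $\cZ$ and $\cY$ consistent throughout. All the genuine content is carried by \eqref{e:c3}, \eqref{e:comc}, Lemma~\ref{l:noncov}, and the identity $\Delta_+\circ\phi=\Delta^{\op{c}}_+$.
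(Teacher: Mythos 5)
Your proof is correct and follows essentially the same route as the paper: both proofs hinge on the identity $\Delta_+\circ\phi=\Delta^{\op{c}}_+$, on equation \eqref{e:c3} of Proposition~\ref{p:phi} to intertwine $L^{\cY}$ and $L^{\cY,\op{c}}$ through $\phi$, on applying $\phi$ to equation \eqref{e:comc} from Theorem~\ref{t:dcom}, and on Lemma~\ref{l:noncov} to pass from the change of variables $j^*\circ\bar f^\cX\circ i^*$ to $\bar f^\cY$. The paper presents it as ``start from \eqref{e:comc} and rewrite both sides'' while you present it as a chain of three separate equalities, but the content is identical; your extra paragraph unwinding the second expression in \eqref{e:barf} in terms of the narrow dual basis is a small elaboration the paper leaves implicit.
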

\begin{proof}
This follows almost immediately from the previous section, by applying the map $\phi: H^*_{\op{CR}, \op{c}}(\cY) \to H^*_{\op{CR}, \op{nar}}(\cY)$.  Recall \eqref{e:comc}:
\begin{equation}\label{e:rec}
\Delta^{\op{c}}_+ \circ L^{\cY, \op{c}}(\bt, z) (\beta) = L^{\cZ, \op{amb}}(j^* \circ \bar f^\cX\circ i^*(\bt), z)\circ \Delta^{\op{c}}_+ \circ e^{- \pi \sqrt{-1} c_1(\pi^*\cE)/z} (\beta).
\end{equation}
By Proposition~\ref{p:phi}, the left hand side of \eqref{e:rec} is equal to
\begin{equation*}
\Delta_+^{\op{c}} \circ \phi \left(L^{\cY, \op{c}}(\bt, z) (\beta) \right) = \Delta_+ \circ L^{\cY}(\bt, z) (\phi(\beta) )
\end{equation*}
for all $\beta \in H^*_{\op{CR}, \op{c}}(\cY)$.  By Lemma~\ref{l:noncov} the right hand side of \eqref{e:rec} is 
\begin{align*}
&L^{\cZ, \op{amb}}(\bar f^\cY(\bt), z)\circ  \Delta^{\op{c}}_+ \circ e^{- \pi \sqrt{-1} c_1(\pi^*\cE)/z} (\beta)
\\ = &L^{\cZ, \op{amb}}(\bar f^\cY(\bt), z) \circ\Delta_+ \circ e^{- \pi \sqrt{-1} c_1(\pi^*\cE)/z} (\phi( \beta)).
\end{align*}
\end{proof}

\begin{theo}\label{t:QSD}  Assume $\cX$ has the resolution property.
The map $\bar \Delta_+$ identifies the quantum $D$-module $QDM_{\op{nar}}(\cY)$ with $\bar f^* \left( QDM_{\op{amb}}(\cZ) \right)$.  Furthermore it is compatible with the integral structure and the functor $j^* \circ \pi_*$, i.e., the following diagram commutes;
\begin{equation}\label{e:comn}
\begin{tikzcd}
D(\cY)_{\cX} \ar[r, " j^* \circ \pi_*"] \ar[d, "s^{\cY, \op{nar}}"] &  j^*(D(\cX))  \ar[d, "s^{\cZ, \op{amb}}"]\\
QDM_{\op{nar}}(\cY) \ar[r, "\bar \Delta_+"] & QDM_{\op{amb}}(\cZ).
\end{tikzcd}
\end{equation}
%
\end{theo}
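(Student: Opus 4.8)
The plan is to obtain the identification of quantum $D$-modules and the compatibility with the integral structure and the functor $j^*\circ\pi_*$ as consequences of the compactly supported statement Theorem~\ref{t:dcom} together with the $\phi$-compatibilities of Proposition~\ref{p:phi}, and then to prove the remaining new ingredient — the matching of pairings — by a direct computation on $\cX$.

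\textbf{Integral structure and the functor.} The key observation is that $s^{\cY,\op{c}}(F)$ is a lift of $s^{\cY,\op{nar}}(F)$. Applying $\phi$ to the defining formula for $s^{\cY,\op{c}}(F)$ and using \eqref{e:c3}, the fact that $\phi$ commutes with the cup product on $I\cY$ by an arbitrary (not necessarily compactly supported) class, with $I^*$ and with the grading operators, and the identity $\phi\circ\ch^{\op{c}}=\ch$ on $D_{\op{c}}(\cY)$ from Proposition~\ref{p:cnar}, one gets $\phi\big(s^{\cY,\op{c}}(F)\big)=s^{\cY,\op{nar}}(F)$. Since the value of $\Delta_+$ on a narrow class is independent of the chosen lift, we may compute $\bar\Delta_+\big(s^{\cY,\op{nar}}(F)\big)$ using the lift $s^{\cY,\op{c}}(F)$, so $\bar\Delta_+\big(s^{\cY,\op{nar}}(F)\big)=\bar\Delta^{\op{c}}_+\big(s^{\cY,\op{c}}(F)\big)$. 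Theorem~\ref{t:dcom} identifies the right-hand side with $s^{\cZ,\op{amb}}\big(j^*\circ\bar f^\cX\circ i^*(\bt),z\big)\big(j^*\circ\pi_*(F)\big)$, and by Lemma~\ref{l:noncov} the change of variables $j^*\circ\bar f^\cX\circ i^*$ coincides with $\bar f^\cY=\bar f$. This is precisely the commutativity of \eqref{e:comn}.

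\textbf{The quantum connection.} Pre-composing the identity of Proposition~\ref{p:Lcom+} with the surjection $\phi$ gives, on $H^*_{\op{CR},\op{nar}}(\cY)$,
\[\bar\Delta_+\circ L^{\cY}(\bt,z)=L^{\cZ,\op{amb}}\big(\bar f^\cY(\bt),z\big)\circ\bar\Delta_+\circ e^{-\pi\sqrt{-1}c_1(\pi^*\cE)/z}.\]
Inserting the factors $z^{-\op{Gr}}z^{\rho(\cY)}$ and reconciling them with $z^{-\op{Gr}}z^{\rho(\cZ)}$, with the factor $(2\pi\sqrt{-1}z)^{\op{rk}(\cE)}$ built into $\bar\Delta_+$, and with the exponential factor — exactly the grading computation carried out in the proof of Theorem~\ref{t:dcom} — one sees that $\bar\Delta_+$ carries the fundamental solution $L^{\cY}(\bt,z)z^{-\op{Gr}}z^{\rho(\cY)}$ of $\nabla^\cY|_{\op{nar}}$ to a fundamental solution of $\bar f^*(\nabla^\cZ)$; since $\bar\Delta_+$ is an isomorphism of the underlying modules (Lemma~\ref{l:deltadiff}), it intertwines $\nabla^\cY$ restricted to the narrow subspace with $\bar f^*(\nabla^\cZ)$, in the $z$-direction as well. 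One could equally deduce this from the previous paragraph, since the sections $s^{\cY,\op{nar}}(F)$ span $\ker\nabla^\cY|_{\op{nar}}$ by Assumption~\ref{as:nc}.

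\textbf{The pairing.} Here $\bar f$ plays no role, since $S^\cZ$ does not involve $\bt$, so it suffices to match $S^{\cY,\op{nar}}$ with $S^\cZ$. I would first prove the class-level identity
\[\langle\alpha,\beta\rangle^{\cY,\op{nar}}=(-1)^{\op{rk}(\cE)}\,\langle\Delta_+\alpha,\Delta_+\beta\rangle^{\cZ},\qquad\alpha,\beta\in H^*_{\op{CR},\op{nar}}(\cY).\]
Write a lift of $\alpha$ as $\tilde\alpha=i^{\op{c}}_*(a)$ and $\beta=i_*(b)$ with $a,b\in H^*_{\op{CR}}(\cX)$, possible because $i^{\op{c}}_*$ is an isomorphism, so that $\Delta_+\alpha=j^*(a)$ and $\Delta_+\beta=j^*(b)$. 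Using $I_*i_*=i_*I_*$, the projection formula for $i$, the self-intersection identity $i^*i_*=e(\cE^\vee)\cup(-)$ (the normal bundle of the zero section is $\cE^\vee$, pulled back from the coarse space by convexity), and $\int_{I\cY}\circ\,i^{\op{c}}_*=\int_{I\cX}$, the left side becomes $\int_{I\cX}a\cup e(\cE^\vee)\cup I_*b$; similarly, using $I_*j^*=j^*I_*$, the projection formula for $j$, and $j_*(1)=e(\cE)$ (the zero-locus class on $I\cX$), the right side equals $\int_{I\cX}a\cup e(\cE)\cup I_*b$, and $e(\cE^\vee)=(-1)^{\op{rk}(\cE)}e(\cE)$ gives the identity. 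Substituting into $S^{\square}(u,v)=(2\pi\sqrt{-1}z)^{\dim(\square)}\langle u(-z),v(z)\rangle^{\square}$ and using $\bar\Delta_+=(2\pi\sqrt{-1}z)^{\op{rk}(\cE)}\Delta_+$, the sign $(-1)^{\op{rk}(\cE)}$ is cancelled by the sign produced by $z\mapsto-z$ in the first slot, while the two factors of $(2\pi\sqrt{-1}z)^{\op{rk}(\cE)}$ combine with $(2\pi\sqrt{-1}z)^{\dim(\cZ)}$ into $(2\pi\sqrt{-1}z)^{\dim(\cY)}$ since $\dim(\cY)=\dim(\cZ)+2\op{rk}(\cE)$; hence $S^\cZ(\bar\Delta_+u,\bar\Delta_+v)=S^{\cY,\op{nar}}(u,v)$, completing the identification $QDM_{\op{nar}}(\cY)\cong\bar f^*\big(QDM_{\op{amb}}(\cZ)\big)$.

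\textbf{Expected main obstacle.} The analytic heart of quantum Serre duality — the non-equivariant limits, the symplectic transformation on Givental's cones, and the twisted-to-ambient comparison — is already packaged into Proposition~\ref{p:Lcom+} and Theorem~\ref{t:dcom}, so the genuinely new work is the pairing computation. The delicate points there are the orbifold bookkeeping (applying the self- and excess-intersection formulas componentwise over the inertia stack and correctly handling the involution $I$) and, across all three structures simultaneously, tracking the normalizing factors $(2\pi\sqrt{-1}z)^{\op{rk}(\cE)}$, $e^{-\pi\sqrt{-1}c_1(\pi^*\cE)/z}$, and the degree shift of $\Delta^{\op{c}}_+$, so that a single normalization of $\bar\Delta_+$ and $\bar f$ serves the connection, the pairing, and the integral lattice at once.
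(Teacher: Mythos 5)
Your proposal is correct and takes essentially the same route as the paper: the connection statement is read off from Proposition~\ref{p:Lcom+}; compatibility with the integral lattice and $j^*\circ\pi_*$ is obtained from Theorem~\ref{t:dcom} after observing that $s^{\cY,\op{c}}(F)$ is a lift of $s^{\cY,\op{nar}}(F)$ (equivalently $\ch=\phi\circ\ch^{\op{c}}$); and the pairing is matched by the same class-level computation on $I\cX$ via projection formulas for $i$ and $j$, the self-intersection identity, and $e(\cE^\vee)=(-1)^{\op{rk}\cE}e(\cE)$, with the $(-1)^{\op{rk}\cE}$ absorbed by the $(2\pi\sqrt{-1}z)^{\op{rk}\cE}$ factor in $\bar\Delta_+$ under $z\mapsto -z$. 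The only cosmetic difference is that you reduce both sides of the pairing identity to a common expression on $I\cX$, whereas the paper chains the $\cZ$-pairing directly into the $\cY$-pairing, but the ingredients and the conclusion are identical.
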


\begin{proof}  The fact that $\nabla^{\cY, \op{nar}}$ is mapped to $\nabla^{\cZ, \op{amb}}$ follows from Proposition~\ref{p:Lcom+}.
To see that the pairings agree, first observe that for $\alpha, \beta \in H^*_{\op{CR}}(\cX)$,
\begin{align*}
 \langle \Delta_+ i_* \alpha, \Delta_+ i_* \beta \rangle^{\cZ}  &= \int_{I\cZ} j^* (\alpha) \cup_{I\cZ} I_*(j^*  \beta) \\
 & =  \int_{I\cX} \alpha \cup_{I\cX} I_*(\beta \cup_{I\cX} q^*e(\cE)) \\
 & = (-1)^{\op{rk}(\cE)} \int_{I\cX} \alpha \cup_{I\cX}  I_*\left({i^{\op{c}}}^* i^{\op{c}}_*(\beta)\right) \\  & = (-1)^{\op{rk}(\cE)} \int_{I\cY} i^{\op{c}}_* \alpha \cup_{I\cY} I_*\left( i^{\op{c}}_* \beta\right)  \\
  & = (-1)^{\op{rk}(\cE)} \int_{I\cY} i_* \alpha \cup_{I\cY, \op{c}} I_*\left( i_* \beta \right)  \\
 &= (-1)^{\op{rk}(\cE)} 
 \langle i_*\alpha, i_* \beta \rangle^{\cY},
 \end{align*} 
 where $\cup_{I\cZ}$ (resp. $\cup_{I\cY}$, etc...) indicates we are using the usual cup product on $I\cZ$ (resp. $I\cY$, etc...) as opposed to the Chen--Ruan cup product.
The fourth equality is the projection formula (Proposition 6.15 of \cite{BT}). In the fifth equality we use the fact that   
 $i_* \alpha \cup_{I\cY, \op{c}} I_*\left( i_* \beta \right) = (\phi \circ i^{\op{c}}_* \alpha) \cup_{I\cY, \op{c}} I_*\left( \phi \circ  i^{\op{c}}_* \beta\right)= i^{\op{c}}_* \alpha \cup_{I\cY} I_*\left( i^{\op{c}}_* \beta\right)$ by Definition~\ref{d:cupcom} of the compactly supported cup product.  Because $\bar \Delta_+$ contains the factor of $z^{\op{rk}(\cE)}$,
 \begin{align*}
 S^\cZ(\bar \Delta_+ i_* \alpha, \bar \Delta_+ i_* \beta) & = (-1)^{\op{rk}(\cE)}(2 \pi \sqrt{-1} z)^{\op{dim}(\cZ) + 2\op{rk}(\cE)} \langle \Delta_+ i_* \alpha, \Delta_+ i_* \beta \rangle^{\cZ}\\ &= (2 \pi \sqrt{-1} z)^{\op{dim}(\cY)} \langle i_*\alpha, i_* \beta \rangle^{\cY}\\
 &= S^\cY (i_*\alpha, i_* \beta).
 \end{align*}
 Because $H^*_{\op{CR}, \op{nar}}(\cY) = \op{im}(i_*)$ by Proposition~\ref{p:tot}, we conclude that $\bar \Delta_+$ preserves the pairing on all of $H^*_{\op{CR}, \op{nar}}(\cY)$.
 
 Commutativity of the square \eqref{e:comn} follows from the equalities of Proposition~\ref{p:Lcom+} and Theorem~\ref{t:dcom} after observing that $\ch = \phi \circ \ch^{\op{c}}$.
\end{proof}

\section{Appendix 1: Givental's formalism}

Let $\square$ denote either Gromov--Witten theory of a proper Deligne--Mumford stack $\cX$
or an $\bs$-twisted theory over $\cX$.  In this section we recall Givental's formalism of an overruled Lagrangian cone for encoding genus-zero Gromov--Witten theory.  For more details see \cite{G3}.

Let $H^\square$ be the state space for the theory. 
Given a basis $\{T_i\}_{i \in I}$, a point in $H^\square$ may be written as $\bt = \sum_{i \in I} t^i T_i$.
Denote by $\bt(z)$ the power series 
\[\bt(z) = \sum_{k \geq 0} \bt_k z^k = \sum_{i \in I} \sum_{k \geq 0} t^i_k T_i z^k\]
in $H^\square[[z]]$.  Define the genus zero descendent potential as the formal function
\[ \cc F^\square_0(\bt(z)) := \sum_{d \in \op{Eff}} \sum_{n \geq 0} \frac{1}{n!} \langle \bt(\psi), \ldots, \bt(\psi)\rangle_{0, n,d}^\square.\]

Let $\sV^\square$ denote the infinite-dimensional vector space $H^\square[z][[z^{-1}]][[\bs]]$ (where $\bs = 0$ in the untwisted case).  We endow $\sV^\square$ with a symplectic pairing defined as follows:
\[\Omega^\square(f_1(z), f_2(z)) := \op{Res} \langle f_1(-z), f_2(z)\rangle^\square.\]
The vector space $\sV^\square$ may be polarized as 
\begin{align*} \sV^\square_+ &= H^\square[z],\\
\sV^\square_- & = z^{-1} H^\square[[z^{-1}]].
\end{align*}
The polarization on $\sV^\square$ determines Darboux coordinates $\set{q_k^i, p_{k,i}}$. Each
element of $\sV^\square$ may be written as 
\[
\sum_{k \geq 0}\sum_{i \in I} q_k^i T_i z^k + \sum_{k \geq 0}\sum_{i \in I} p_{k,i}T^i (-z)^{-k-1}
\]
We view $\cc F^\square_0(\bt(z))$ as a formal function on $\sV^\square_+$ via the \emph{dilaton shift}
\[\bq(z) = \bt(z) - z.\]  
\begin{defi}
Define the \emph{overruled Lagrangian cone} for $\square$ to be
\begin{equation}\label{e:cone1}
\sL^\square :=\{\bp =d_{\bq} \cc F^\square_0\}.
\end{equation}
Explicitly, $\sL^\square$ contains the points of the form
\begin{equation}\label{e:cone2}
- z+ \sum_{\substack{k \geq 0 \\ i \in I}} t_k^i T_i z^k +\sum_{d \in \op{Eff}} \sum_{\substack{a_1, \ldots , a_n,  a\geq 0 \\ i_1, \ldots , i_n, i \in I}} \frac{t^{i_1}_{a_1}\cdots t^{i_n}_{a_n}}{n!(-z)^{a+1}}\langle \psi^aT_i ,\psi^{a_1}T_{i_1},\dots,\psi^{a_n}T_{i_n}\rangle_{0, n+1,d}^\square T^i.
\end{equation}
\end{defi}
As shown in \cite{G3}, $\sL^\square$ takes a special form: 
\begin{itemize}
\item it
is a cone; 
\item for all $f \in \sL^\square$,
\[\sL^\square \cap T_f\sL = zT_f\sL\] 
where $T_f\sL$ is the tangent space to $\sL^\square$ at $f$.
\end{itemize} 

Consider a generic family in $\sL^\square$ parameterized by $H^\square$, this will take the form 
\[
\{f(\bt)| \bt \in H^\square\} \subset \sL^\square,
\]
and
will be transverse to the ruling. 
With this, the above properties imply that we can reconstruct $\sL^\square$ as
\begin{equation}\label{e:genfam}
\sL^\square = \set{zT_{f(\bt)}\sL^\square| \bt \in H^\square}.
\end{equation}

Givental's $J$--function is such a family.  It is given by the intersection:
\[
J^\square(\bt, -z) = \sL^\square \cap -z \oplus \bt \oplus \sV^{-}.
\]
More explicitly, 
\[
J^\square(\bt, -z) = -z + \bt +\sum_{d \in \op{Eff}} \sum_{n \geq 0} \sum_{ i \in I} \frac{1}{n!} \br{ \frac{T_i}{-z - \psi}, \bt, \ldots, \bt }^\square_{0, n+1,d} T^i.
\]  
By \eqref{e:invadj}, we see that
\begin{equation}\label{e:LJ}
L^\square(\bt, z)^{-1} \alpha = L^\square(\bt, -z)^T \alpha = \partial_\alpha J^\square(\bt, z).
\end{equation}

In \cite{G3} it is shown that the image of $J^\square(\bt, -z)$ is transverse to the ruling of $\sL^\square$, so $J^\square(\bt, -z)$ is a function satisfying \eqref{e:genfam}.  
It follows that the ruling at $J^\square(\bt, -z)$ is spanned by the derivatives of $J^\square$, i.e.

\begin{align}\label{e:Jgens}zT_{J^\square(\bt, -z)}\sL &= \set{J^\square(\bt, -z) + z\sum c_i(z) \frac{\partial}{\partial t^i} J^\square(\bt, -z) | c_i(z) \in \CC[z]} \\
&= \set{z\sum c_i(z) \frac{\partial}{\partial t^i} J^\square(\bt, -z) | c_i(z) \in \CC[z]}. \nonumber
\end{align}
where the second equality is by the string equation, $ z\frac{\partial}{\partial t^0} J^\square(\bt, z) = J^\square(\bt, z)$.  We note finally that
\begin{equation}\label{e:Jgens2}
T_{J^\square(\bt, -z)}\sL = \set{ \sum c_i(z) \frac{\partial}{\partial t^i} J^\square(\bt, -z) | c_i(z) \in \CC[z]}.
\end{equation}

\subsection{Quantum Serre duality with Lagrangian cones}
Since its discovery in \cite{G1}, quantum Serre duality (or non-linear Serre duality) has been formulated in many different ways.  Below we recall one of the most general and applicable, in terms of twisted theories and Lagrangian cones.  
Let $\bs$ denote the twisting parameters of \S~\ref{s:td} and define $\bs^*$ by 
\[s_k^* = (-1)^{k+1} s_k.\]
Note \[ \bs^*(\cE^\vee) = \frac{1}{\bs(\cE)}.\]
In this case (genus zero) quantum Serre duality takes the following form.
\begin{theo}\cite[Corollary 9]{CG}
The symplectic transformation
\begin{align*} \sV^{\bs(\cE)} &\to \sV^{\bs^*(\cE^\vee)}\\
 f(z) &\mapsto \bs^*(\cE^\vee) f(z)\end{align*}
identifies $\sL^{\bs(\cE)}$ with $\sL^{\bs^*(\cE^\vee)}$.
\end{theo}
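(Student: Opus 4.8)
The plan is to deduce the statement from the quantum Riemann--Roch theorem of Coates--Givental (in the orbifold setting, Tseng), which linearizes the passage from the untwisted theory over $\cX$ to any twisted one. First I would recall that, for a twisting by $\mathbb R\pi_* f^*F$ with an invertible multiplicative class $\bs$ and a vector bundle $F\to\cX$, quantum Riemann--Roch presents the twisted cone as $\sL^{\bs(F)} = \cc{R}_{\bs,F}\,\sL^\cX$, where $\cc{R}_{\bs,F}$ is an explicit $z$-dependent symplectic transformation of $\sV$ --- multiplication by a characteristic-class series in the Chern roots of $F$, the parameters $s_k$, and the Bernoulli numbers --- obtained by applying Grothendieck--Riemann--Roch to the universal curve $\pi\colon\widetilde{\cc C}\to\sMbar_{0,n}(\cX,d)$ and feeding the output into the string/dilaton structure of $\sL^\cX$ from \cite{G3}. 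Specializing this to $(\bs,\cE)$ and to $(\bs^*,\cE^\vee)$ gives the two presentations $\sL^{\bs(\cE)} = \cc{R}_{\bs,\cE}\,\sL^\cX$ and $\sL^{\bs^*(\cE^\vee)} = \cc{R}_{\bs^*,\cE^\vee}\,\sL^\cX$.

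Granting these, the theorem reduces to the operator identity
\[
\cc{R}_{\bs^*,\cE^\vee} \;=\; \bs^*(\cE^\vee)\cdot\cc{R}_{\bs,\cE},
\]
i.e.\ the claim that $\cc{R}_{\bs^*,\cE^\vee}\circ\cc{R}_{\bs,\cE}^{-1}$ is multiplication by the class $\bs^*(\cE^\vee) = 1/\bs(\cE)$; applying this to $\sL^\cX$ then yields exactly the asserted identification of cones. To prove the operator identity I would take logarithms, reducing it to an equality of $H^*_{\op{CR}}(\cX)$-valued Laurent series in $z$ to be checked coefficient by coefficient, and then plug in the classical Serre-duality relations between the twisting data: the orbifold Chern character of $\cE^\vee$ is the ``dual'' of that of $\cE$ (negate the Chern roots, invert the $\gamma$-eigenvalues), so $\ch_k(\cE^\vee)=(-1)^k\ch_k(\cE)$ on the untwisted sector; the parameters satisfy $s^*_k=(-1)^{k+1}s_k$, whence $s^*_k\ch_k(\cE^\vee) = -s_k\ch_k(\cE)$ and $\bs^*(\cE^\vee) = 1/\bs(\cE)$; and, over the universal curve, $\mathbb R\pi_*(f^*\cE^\vee)$ is, up to a shift, the Grothendieck--Serre dual of $\mathbb R\pi_*(f^*\cE\otimes\omega_\pi)$, so the two Grothendieck--Riemann--Roch computations feeding $\cc{R}_{\bs,\cE}$ and $\cc{R}_{\bs^*,\cE^\vee}$ differ only by dualizing Chern roots and twisting by the relative dualizing sheaf $\omega_\pi$. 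One then checks that after these substitutions the $z$-graded ``Hodge correction'' part of $\cc{R}_{\bs,\cE}$ (the coefficients of $z^{\pm1},z^{\pm2},\dots$) is carried to that of $\cc{R}_{\bs^*,\cE^\vee}$, while the classical ($z^0$) parts combine to the factor $\bs^*(\cE^\vee)$.

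The main obstacle will be precisely this last piece of bookkeeping. The delicate point is that the Bernoulli coefficients in $\cc{R}_{\bs,\cE}$ --- entering through the Todd/$\hat\Gamma$-type expansion of $\omega_\pi$, exactly as in the computation \eqref{e:gamma} --- get entangled with the sign changes $\ch_k(\cE^\vee)=(-1)^k\ch_k(\cE)$ and $s^*_k=(-1)^{k+1}s_k$, and one must use the vanishing of the odd Bernoulli numbers $B_{2m+1}$ for $m\geq1$ (equivalently the evenness of $x\mapsto\Gamma(1+x)\Gamma(1-x)$) to see that no spurious odd powers of $z$ survive and that the corrections match on the nose. Once the operator identity is established, no further Gromov--Witten input is needed. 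Alternatively, one can avoid the explicit operators altogether and argue directly on the cones, comparing the genus-zero descendent potentials $\cc{F}^{\bs(\cE)}_0$ and $\cc{F}^{\bs^*(\cE^\vee)}_0$ by the same Grothendieck--Serre duality on $\widetilde{\cc C}$ together with the reconstruction \eqref{e:genfam} of $\sL$ from a generic family; this is in essence the route of \cite[Corollary 9]{CG}.
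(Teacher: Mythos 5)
The paper does not prove this statement; it is cited verbatim from Coates--Givental \cite[Corollary 9]{CG} (with Tseng's \cite{Ts} quoted for the orbifold extension). Your sketch correctly reconstructs the route taken there: deduce the identification of cones from quantum Riemann--Roch, i.e.\ from the two presentations $\sL^{\bs(\cE)}=\cc{R}_{\bs,\cE}\,\sL^\cX$ and $\sL^{\bs^*(\cE^\vee)}=\cc{R}_{\bs^*,\cE^\vee}\,\sL^\cX$, and then verify the operator identity $\cc{R}_{\bs^*,\cE^\vee}=\bs^*(\cE^\vee)\cdot\cc{R}_{\bs,\cE}$ by comparing the Euler--Maclaurin expansions term by term. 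You have also correctly isolated the two computational inputs that make this work: the vanishing of the odd Bernoulli numbers $B_{2m+1}$ for $m\ge 1$, which kills all nonconstant odd-$z$ discrepancies after the substitutions $\ch_k(\cE^\vee)=(-1)^k\ch_k(\cE)$ and $s^*_k=(-1)^{k+1}s_k$, and the surviving $B_1$ (i.e.\ $z^0$) term, whose sign flips under $(\bs,\cE)\mapsto(\bs^*,\cE^\vee)$ and produces exactly the overall factor $\bs^*(\cE^\vee)=1/\bs(\cE)$.

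Two small remarks, neither of which is a gap but which you should keep in mind when you turn the sketch into a proof. First, the $z^{-1}$ ($m=0$) part of the exponent also matches on the nose under the substitutions $s^*_{l-1}\ch_l(\cE^\vee)=s_{l-1}\ch_l(\cE)$; this is worth stating explicitly, since it is not covered by the "odd Bernoulli" argument. Second, the theorem as stated also asserts that the map is a \emph{symplectic} transformation between $\bigl(\sV^{\bs(\cE)},\Omega^{\bs(\cE)}\bigr)$ and $\bigl(\sV^{\bs^*(\cE^\vee)},\Omega^{\bs^*(\cE^\vee)}\bigr)$; your proposal establishes only that the cones are identified. You should either check the compatibility of the pairings separately, or note that it follows from the fact that $\cc{R}_{\bs,\cE}$ and $\cc{R}_{\bs^*,\cE^\vee}$ are each symplectic (from $\Omega^\cX$ to the respective twisted forms), so that $\bs^*(\cE^\vee)=\cc{R}_{\bs^*,\cE^\vee}\cc{R}_{\bs,\cE}^{-1}$ inherits this. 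That is the organization used in \cite{CG}, and it sidesteps any direct manipulation of the twisted residue pairing.
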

See  \cite{Ts} for the orbifold version of this theorem.

Note however that the specializations of the twisting parameters given by \eqref{e:specCI} and \eqref{e:spec2} are not exactly of the form given above, i.e. $\bs ' \neq \bs^*$.  The statement must be modified slightly in this case.  
We state here the modified statement of the above theorem for the case of the Euler class-twisted theories of \S~\ref{s:twisting}.   
This specific formulation appears as Theorem 5.17 of~\cite{LPS}.

\begin{theo}\cite[Theorem 5.17]{LPS} \label{t:QSDlag} The symplectic transformation
\begin{align*}
\Delta^\diamond: \sV^{e_\lambda^{-1}(\cE^\vee)} &\to \sV^{e_\lambda(\cE)}  \\
f(z) &\mapsto \frac{e^{ \pi \sqrt{-1} c_1(\cE)/z}}{e_{\lambda}(\cE^\vee)}f(z)
\end{align*}
identifies $\sL^{e_\lambda^{-1}(\cE^\vee)}$ with $\sL^{e_\lambda(\cE)}$.
\end{theo}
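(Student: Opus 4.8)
The plan is to deduce Theorem~\ref{t:QSDlag} from the general twisted quantum Serre duality of \cite[Corollary 9]{CG} (in the orbifold form of \cite{Ts}), recalled above, by accounting for the fact that the specializations \eqref{e:specCI} and \eqref{e:spec2} are \emph{not} literally dual to one another. Concretely, let $\bs$ denote the parameters \eqref{e:specCI}, so that $\bs(\cE)=e_\lambda(\cE)$ on the nose and hence $\sL^{\bs(\cE)}=\sL^{e_\lambda(\cE)}$ is the target. By \cite{CG}, multiplication by $\bs^*(\cE^\vee)$ identifies $\sL^{\bs(\cE)}$ with $\sL^{\bs^*(\cE^\vee)}$, where $s_k^*=(-1)^{k+1}s_k$. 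So the proof reduces to two tasks: first, to identify the abstract dual theory $\sL^{\bs^*(\cE^\vee)}$ with the concrete theory $\sL^{e_\lambda^{-1}(\cE^\vee)}$ appearing in the total space picture of \S~\ref{ss:ts}; and second, to read off how the multiplier $\bs^*(\cE^\vee)$ is modified under that identification and check that the net transformation is exactly $\Delta^\diamond$.

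For the first task, compare parameters: one finds $s_k^*=s_k'$ for every $k\ge 1$, while $s_0^*-s_0'=\ln(-\lambda)-\ln(\lambda)=\pi\sqrt{-1}$ for a suitable branch of $\log$. Thus the two twisting classes differ precisely by the factor $\exp\!\big(\pi\sqrt{-1}\,\ch_0(\mathbb R\pi_*f^*\cE)\big)$. Convexity of $\cE$ (Remark~\ref{r:conv}) makes $\mathbb R\pi_*f^*\cE$ a genuine vector bundle pulled back from the coarse curve, and Riemann--Roch on the universal curve gives $\ch_0(\mathbb R\pi_*f^*\cE)=\chi(\cC,f^*\cE)=\op{rk}(\cE)+\langle c_1(\cE),d\rangle$ on the class-$d$ locus. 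Hence passing from the $\bs^*$-twisted to the $\bs'$-twisted genus-zero theory multiplies every correlator of class $d$ by the scalar $(-1)^{\op{rk}(\cE)}$ times $e^{\pi\sqrt{-1}\langle c_1(\cE),d\rangle}$.

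For the second task, the Novikov reweighting $Q^d\mapsto Q^d e^{\pi\sqrt{-1}\langle c_1(\cE),d\rangle}$ is, by the divisor equation \cite{CK} for the genus-zero descendant potential, the same as translating the base point $\bt$ along $\pi\sqrt{-1}\,c_1(\cE)$ together with multiplication of $\sV$ by $e^{\pi\sqrt{-1}c_1(\cE)/z}$; since translating the base point of a generic family does not move the overruled cone, the net effect on $\sL$ is multiplication by $e^{\pi\sqrt{-1}c_1(\cE)/z}$ (up to the scalar $(-1)^{\op{rk}(\cE)}$). Composing this with the \cite{CG}-identification of the first paragraph, and simplifying the resulting multiplier --- using the identity $e_\lambda(\cE^\vee)=(-1)^{\op{rk}(\cE)}e_\lambda(\cE)$ valid under the conventions of \S~\ref{s:twisting}, which trades the various powers of $(-1)$ and the $e_\lambda(\cE)$-factor against $e_\lambda(\cE^\vee)$ --- produces exactly the transformation $f(z)\mapsto e^{\pi\sqrt{-1}c_1(\cE)/z}/e_\lambda(\cE^\vee)\cdot f(z)$, i.e.\ $\Delta^\diamond$, sending $\sL^{e_\lambda^{-1}(\cE^\vee)}$ onto $\sL^{e_\lambda(\cE)}$.

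I expect the genuinely delicate point to be this last bookkeeping: pinning down the branch of $\log$, all powers of $(-1)$, the exact base-point shift, and the way the difference between the twisted symplectic pairings on source and target changes the Darboux frame (hence which symplectomorphism is the relevant one), so that the composite is \emph{precisely} $\Delta^\diamond$ and not a close variant (off by a sign, a scalar, or the sign of $z$). This is exactly the content of \cite[Theorem 5.17]{LPS}, whose argument we follow for the details; alternatively, once the transformation is known to have the stated shape, the identity can be confirmed by comparing the $z^0$- and $z^{-1}$-coefficients of the two generating families $\partial_\alpha J^{e_\lambda^{-1}(\cE^\vee)}$ and $\partial_\beta J^{e_\lambda(\cE)}$ under $\Delta^\diamond$, in the same manner as in the proof of Proposition~\ref{p:Lcomc}.
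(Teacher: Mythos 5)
The paper does not prove Theorem~\ref{t:QSDlag}; it simply cites \cite[Theorem 5.17]{LPS} and, in the surrounding text, explains why the general statement of \cite[Corollary 9]{CG} cannot be used verbatim (since $\bs' \neq \bs^*$). So there is no paper-internal argument for you to be compared against, and indeed your proposal ends by deferring the ``genuinely delicate point'' to \cite{LPS} as well. As a sketch, your strategy is the right one and is the same one used in \cite{LPS}: start from $\bs(\cE)=e_\lambda(\cE)$, apply \cite{CG}, and then reconcile the discrepancy between $\bs^*$ (the literal dual parameters) and $\bs'$ (the parameters specializing to $e_\lambda^{-1}(\cE^\vee)$) using a Novikov reweighting, absorbed by the divisor equation into a base-point shift plus a multiplier of the form $e^{\pm\pi\sqrt{-1}c_1(\cE)/z}$. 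The comparison $s_k^*=s_k'$ for $k\geq1$, $s_0^*-s_0'=\pi\sqrt{-1}$ is correct.

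Two points to flag. First, a concrete slip: both twisted theories in play are twisted by $\cE^\vee$, so the class appearing in the difference factor should be $\ch_0(\mathbb R\pi_*f^*\cE^\vee)$, not $\ch_0(\mathbb R\pi_*f^*\cE)$. On a genus-zero class-$d$ locus, $\ch_0(\mathbb R\pi_*f^*\cE^\vee)=\operatorname{rk}(\cE)-\langle c_1(\cE),d\rangle$, so the Novikov reweighting has the opposite sign from what you wrote; this propagates to the sign of the exponent, and since the conclusion is supposed to be $e^{+\pi\sqrt{-1}c_1(\cE)/z}$ (not $e^{-\pi\sqrt{-1}c_1(\cE)/z}$), getting this sign right is not cosmetic. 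Second, and more seriously, the final composition is not actually carried out. A naive chaining of the CG multiplier (as paraphrased in the paper) with the Novikov-shift multiplier appears to produce a factor of $e_\lambda(\cE)$ rather than $1/e_\lambda(\cE^\vee)$, and the identity $e_\lambda(\cE^\vee)=(-1)^{\operatorname{rk}\cE}e_\lambda(\cE)$ is a scalar discrepancy and cannot trade one for the other. Untangling this requires being precise about which multiplier \cite{CG} really produces (the paraphrase given in the paper's appendix is likely suppressing $z$-dependent factors from the quantum Riemann--Roch computation), about how the twisted pairings on source and target change the Darboux frame, and about the branch of $\log$. You correctly identify all of this as the delicate content, but it is also exactly the content of the theorem, so the proposal remains a strategy sketch rather than a proof. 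That said, as a sketch it is faithful to how the result is actually obtained, and the alternative sanity check you suggest at the end --- comparing $z^0$ and $z^{-1}$ coefficients of the two $J$-functions under $\Delta^\diamond$, as in Proposition~\ref{p:Lcomc} --- is precisely how the paper uses the theorem.
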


\section{Appendix 2: Orbifold localized Chern character}

Given a complex $F^\bullet \in K^0_X(Y)$ (exact off $X$), there exists a \emph{localized Chern character}
\[ \chr^Y_X(F^\bullet) \in H^*(Y, Y- X)\]
as described in Example 19.2.6 of \cite{Fu} (see also \cite{Iv, ABS}).

On the other hand, given a bundle $F$ on a stack $\cY$ with the resolution property, there is an \emph{orbifold Chern character} \cite{To, Ts} landing in the cohomology of the inertia stack and defined as follows.  Restricting $F$ to a twisted sector $F_\gamma \to \cY_\gamma$, $F_\gamma$ decomposes into eigenbundles according to the action of $\gamma$ on $F$
\[F_\gamma = \oplus_{0 \leq f < 1} F_{\gamma, f},\]
where the generator $\gamma$ acts as multiplication by $e^{2 \pi \sqrt{-1} f}$ on $F_{\gamma, f}$.  Define 
\[ \rho(F_\gamma) := \sum_{0 \leq f < 1} e^{2 \pi \sqrt{-1} f} F_{\gamma, f}.\]
Observe that for a complex 
\[F^\bullet =  0 \to F^a \to \cdots  \to F^i \to F^{i+1} \to \cdots \to F^b \to 0\]
of vector bundles, the map $d^i_\gamma: F^i_\gamma \to F^{i+1}_\gamma$ is compatible with the splitting into eigenbundles.  I.e. $d^i_\gamma$ is a direct sum of the maps 
\[d^i_{\gamma, f}: F^i_{\gamma, f} \to F^{i+1}_{\gamma, f}.\]
Consequently $\rho$ gives a well-defined map on $K^0(\cY_\gamma)$.  

Summing over each twisted sector, this defines a map $\rho: K^0(I\cY) \to K^0(I\cY)$.
\begin{defi} The orbifold Chern character $\ch: K^0(\cY) \to H^*_{\op{CR}}(\cY) = H^*(I\cY)$ is defined as the composition
\[K^0(\cY) \xrightarrow{q^*}K^0(I\cY) \xrightarrow{\rho} K^0(I\cY) \xrightarrow{\chr} H^*(I\cY),\]
where $q: I\cY \to \cY$ is the natural union of inclusions and $\chr$ is the usual Chern character defined by passing to the coarse space.
\end{defi}
One can combine the two notions above to obtain a \emph{localized orbifold Chern character}.  Let $\cX$ be a closed substack of $\cY$ and let $F^\bullet$ be a complex on $\cY$, exact off of $\cX$.  Consider the restriction to a twisted sector $\cY_\gamma$, by the observation above, $F^\bullet_\gamma$ splits into eigen-complexes
\[F^\bullet_\gamma = \oplus_{0 \leq f <1} F^\bullet_{\gamma, f},\]
with each $F^\bullet_{\gamma, f}$ exact off $\cX_\gamma$.  This implies that the twisting $\rho$ gives a well defined map on $K^0_{\cX_\gamma}(\cY_\gamma)$.  Summing over all twisted sectors defines a map $\rho: K^0_{I\cX}(I\cY) \to K^0_{I\cX}(I\cY)$.
\begin{defi}
Define the \emph{localized orbifold Chern character} $$\ch^\cY_\cX: K^0_{\cX}(\cY) \to H^*_{\op{CR}}(\cY, \cY - \cX)$$ to be the composition 
\[K^0_\cX(\cY) \xrightarrow{q^*}K^0_{I\cX}(I\cY) \xrightarrow{\rho} K^0_{I\cX}(I\cY) \xrightarrow{\chr^\cY_\cX} H^*(I\cY, I\cY - I\cX).\]
\end{defi}

For $Y$ a non-compact manifold, define $K^0_{\op{c}}(Y)$ to be the direct limit
\[ K^0_{\op{c}}(Y) = \lim_{\to} K^0_X(Y)\]
over all compact subvarieties $X \subset Y$.  Assume $X_1 \subset X_2 \subset Y$ and $F^\bullet$ is a complex exact off $X_1$ (and therefore also exact off $X_2$), let $j: X_1 \to X_2$, $i_1: X_1 \to Y$, and $i_2: X_2 \to Y$ denote the inclusions.
Then
the following diagram commutes:
\begin{equation}\label{e:Kcom}
\begin{tikzcd}
K^0_{X_1}(Y) \ar[r, "j_*"] \ar[d, "\chr^Y_{X_1}"] & K^0_{X_2}(Y) \ar[d, "\chr^Y_{X_2}"] \\
H^*(Y, Y- X_1)  \ar[d, " \cap {\left[Y\right]}"] &  H^*(Y, Y- X_2)\ar[d, " \cap {\left[Y\right]}"] \\
H_*(X_1) \ar[r, "j_*"] \ar[d, "{i_1}^{\op{c}}_*"] & H_*(X_2)\ar[d, "{i_2}^{\op{c}}_*"] \\
H_*(Y) \cong H^*_{\op{c}}(Y) \ar[r] &H_*(Y) \cong H^*_{\op{c}}(Y).
\end{tikzcd}
\end{equation}
The commutativity of the top square follows, for instance, from Definition 18.1 and Example 19.2.6 of \cite{Fu}.
\begin{defi} For $i: X \to Y$ the inclusion of a closed and proper subvariety,
define $\chr^{\op{c}}_X: K^0_X(Y) \to H^*_{\op{c}}(Y)$ by
\[\chr^{\op{c}}_X(F^\bullet) = i^{\op{c}}_* \left( \chr^Y_X(F^\bullet) \cap [Y]\right).\] By diagram \eqref{e:Kcom}, this induces a homomorphism
\[\chr^{\op{c}}: K^0_{\op{c}}(Y) \to H^*_{\op{c}}(Y)\]
which we will refer to as the \emph{compactly supported Chern character}.  
\end{defi}

The above argument  can be extended to the situation where $\cY$ is a smooth Deligne--Mumford stack, we obtain:
\begin{defi}\label{d:cch}
The 
\emph{compactly supported orbifold Chern character} \[\ch^{\op{c}}: K^0_{\op{c}}(\cY) \to H^*_{\op{CR}, \op{c}}(\cY) = H^*_{\op{c}}(I\cY)\] is defined to be the composition
\[K^0_{\op{c}}(\cY) \xrightarrow{q^*}K^0_{\op{c}}(I\cY) \xrightarrow{\rho} K^0_{\op{c}}(I\cY) \xrightarrow{\chr^{\op{c}}} H^*_{\op{c}}(I\cY).\]
\end{defi}
Given $i: X \to Y$ the inclusion of a closed and proper subvariety, by Theorem 1.3 of \cite{Iv},
\[ i_*\left( \chr^Y_X(F^\bullet\right) \cap [Y] = \chr(F^\bullet).\]
This immediately gives the following.
\begin{prop}\label{p:cnar}
For $F^\bullet$ a complex of vector bundles on $Y$ with proper support,
\[ \phi ( \ch^{\op{c}}(F^\bullet)) = \ch(F^\bullet)\]
in $H^*_{\op{CR}}(\cY)$.
\end{prop}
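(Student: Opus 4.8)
The plan is to unwind the definitions of $\ch^{\op{c}}$ and $\ch$ and reduce the statement to the localized Chern character identity of \cite{Iv} recorded immediately above. First I would note that both the pullback $q^*$ and the eigenbundle twisting $\rho$ are natural with respect to the forgetful maps $K^0_{\op{c}}(-) \to K^0(-)$: a complex of vector bundles exact off a proper substack pulls back under $q: I\cY \to \cY$ to one exact off the corresponding substack of $I\cY$, and $\rho$ is built eigenbundle-by-eigenbundle and so commutes with forgetting the support condition. Consequently, writing $G^\bullet := \rho(q^*(F^\bullet)) \in K^0_{\op{c}}(I\cY)$, whose image in $K^0(I\cY)$ has ordinary Chern character $\chr(G^\bullet) = \ch(F^\bullet)$ by the definition of the orbifold Chern character, it suffices to prove the \emph{non-stacky} statement: for any complex $G^\bullet$ of vector bundles on $I\cY$ with proper support,
\[ \phi\bigl(\chr^{\op{c}}(G^\bullet)\bigr) = \chr(G^\bullet) \in H^*(I\cY). \]

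For this I would argue one proper subvariety at a time, since $\chr^{\op{c}}$ is defined on the direct limit $K^0_{\op{c}}(I\cY) = \lim_{\to} K^0_X(I\cY)$ and the defining data are compatible as $X$ varies by diagram \eqref{e:Kcom}. Fix a closed and proper subvariety $i: X \hookrightarrow I\cY$ and recall $\chr^{\op{c}}_X(G^\bullet) = i^{\op{c}}_*\bigl(\chr^{I\cY}_X(G^\bullet) \cap [I\cY]\bigr)$, a class in $H_*(I\cY) \cong H^*_{\op{c}}(I\cY)$. Under Poincar\'e duality the forgetful map $\phi: H^*_{\op{c}}(I\cY) \to H^*(I\cY)$ is identified with $\underline\phi: H_*(I\cY) \to H_*^{\op{BM}}(I\cY)$ by \eqref{e:PD}; moreover $\underline\phi$ commutes with proper pushforward by the computation in the proof of Proposition~\ref{p:pp}, and since $X$ is proper the map $\underline\phi: H_*(X) \to H_*^{\op{BM}}(X)$ is an isomorphism. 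Hence $\phi(\chr^{\op{c}}_X(G^\bullet))$ is carried to the image of $\chr^{I\cY}_X(G^\bullet) \cap [I\cY]$ under $i_*: H_*^{\op{BM}}(X) \to H_*^{\op{BM}}(I\cY) \cong H^*(I\cY)$, which by Theorem 1.3 of \cite{Iv} is exactly $\chr(G^\bullet)$. Taking the direct limit over $X$ gives $\phi \circ \chr^{\op{c}} = \chr$ on $K^0_{\op{c}}(I\cY)$, and combined with the naturality of $q^*$ and $\rho$ this yields $\phi(\ch^{\op{c}}(F^\bullet)) = \ch(F^\bullet)$.

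I do not expect a genuine obstacle here; the content is really the cited theorem of \cite{Iv}, and the only thing demanding care is the bookkeeping of the Poincar\'e duality identifications --- keeping straight finite versus Borel--Moore chains and checking that the relevant squares (proper pushforward against $\underline\phi$, and the limit diagram \eqref{e:Kcom}) commute. Those compatibilities are precisely what \eqref{e:PD}, the proof of Proposition~\ref{p:pp}, and diagram \eqref{e:Kcom} supply, so assembling them into the chain of identifications above is routine.
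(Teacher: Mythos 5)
Your proof is correct and follows exactly the route the paper intends: the paper states Iversen's Theorem~1.3 just above the proposition and declares the result immediate, while you have spelled out the Poincar\'e-duality bookkeeping and the reduction from the stacky to the non-stacky case that make it so. Nothing to add.
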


\bibliographystyle{plain}
\bibliography{references}
\end{document}